 \newtheorem{theorem}{Theorem}[section]
\newtheorem{proposition}[theorem]{Proposition}
\newtheorem{lemma}[theorem]{Lemma}
\theoremstyle{remark}
\newtheorem{remark}[theorem]{Remark}
\newtheorem{definition}[theorem]{Definition}
\newtheorem{example}[theorem]{Example}
\newtheorem{examples}[theorem]{Examples}
\renewcommand\AA{\mathbb{A}}
\newcommand\PP{\mathbb{P}}
\renewcommand\L{\mathcal{L}}
\renewcommand{\TH}{\mathbb{T}H}
\newcommand{\T}{\mathbb{T}}
\newcommand{\n}{\mathfrak{n}}
\newcommand{\pr}{\on{pr}}
\newcommand\lie[1]{\mathfrak{#1}}
\renewcommand{\k}{\lie{k}}
\newcommand{\h}{\lie{h}}
\newcommand{\g}{\lie{g}}
\newcommand{\q}{\lie{q}}
\newcommand{\on}{\operatorname}
\newcommand{\Ad}{ \on{Ad} }
\newcommand{\ad}{ \on{ad} }
\newcommand{\End}{ \on{End} }
\renewcommand{\ker}{ \on{ker}}
\newcommand{\Mult}{{\on{Mult}}}
\newcommand{\id}{{\on{id}}}
\newcommand{\da}{\dasharrow}
\newcommand{\qu}{/\!\!/}
\newcommand{\hra}{\hookrightarrow}
\newcommand{\xra}{\xrightarrow}
\newcommand{\rra}{\rightrightarrows}
\newcommand{\dd}{\mf{d}}
\newcommand{\nn}{\mf{n}}
\newcommand{\uu}{\mf{u}}
\newcommand{\pp}{\mf{p}}
\newcommand{\rr}{\mf{r}}
\newcommand{\ol}{\overline}
\newcommand\sig{\sigma}
\newcommand\eps{\epsilon}
\newcommand\Om{\Omega}
\newcommand{\f}{\tfrac}
\newcommand{\cc}{\mf{c}}
\renewcommand{\l}{\langle}
\renewcommand{\r}{\rangle}
\newcommand{\hh}{{ \f{1}{2}}}
\newcommand{\ti}{\tilde}
\newcommand\pt{\on{pt}}
\newcommand\beqn{\begin{equation}}
\newcommand\eeqn{\end{equation}}
\newcommand{\ca}{\mathcal}
\newcommand{\wh}{\widehat}
\newcommand{\wt}{\widetilde}
\newcommand{\mf}{\mathfrak}
\newcommand{\beq}{\begin{eqnarray*}}
\newcommand{\eeq}{\end{eqnarray*}}
\newcommand{\Cour}[1]      {[\![#1]\!]}
\newcommand{\VB}{{\mathcal{VB}}}
\newcommand{\LA}{{\mathcal{LA}}}
\newcommand{\CA}{{\mathcal{CA}}}
\newcommand{\sz}{\mathsf{s}}
\newcommand{\tz}{\mathsf{t}}
\newcommand{\uz}{\mathsf{u}}
\renewcommand{\subset}{\subseteq}
\begin{document}

\title[]{Dirac actions and Lu's Lie algebroid}

\author{E. Meinrenken}\address{University of Toronto, Department of Mathematics, 40 St George Street, Toronto, Ontario M4S2E4, Canada }\email{mein@math.toronto.edu}

\date{\today}

\begin{abstract}
Poisson actions of Poisson Lie groups have an interesting and rich geometric structure. We will generalize some of this structure to Dirac actions of Dirac Lie groups. Among other things, we extend a result of Jiang-Hua-Lu, which states that the cotangent Lie algebroid and the action algebroid for a Poisson action form a matched pair. We also give a full classification of Dirac actions for which the base manifold is a homogeneous space $H/K$, obtaining a generalization of Drinfeld's classification for the Poisson Lie group case. 
\end{abstract}

\maketitle
\setcounter{tocdepth}{2}

{\small \tableofcontents \pagestyle{headings}}

\setcounter{section}{-1}
\vskip.3in
\section{Introduction}\label{sec:intro}
A Poisson-Lie group $H$ is a Lie group endowed with a Poisson 
structure $\pi_H$ such that the multiplication map $\on{Mult}_H\colon H\times H\to H$ is Poisson. An action of $H$ on a Poisson manifold $(M,\pi_M)$ is 
called a Poisson action if the action map $\ca{A}_M\colon H\times M\to M$ is a Poisson map. 
Such actions may be seen as `hidden' symmetries, not necessarily preserving the Poisson structure on $M$. They were first studied by Semenov-Tian-Shansky \cite{se:dr} in the context of soliton theory. Poisson actions have a rich geometric structure, developed in the work of many authors, including \cite{fer:int,fl:pc,kar:cla,lu:mo,lu:po,lu:note,xu:poigr}.

Let $L=T^*_\pi M$ be the cotangent Lie algebroid of $M$. Since the $H$-action on $M$ does not preserve 
the Poisson structure, its cotangent lift is not by Lie algebroid automorphisms, in general. However, 
by a result of Jiang-Hua Lu \cite{lu:po} there is a new Lie algebroid structure on the direct sum 
with the action Lie algebroid $M\times \h$
\[ \wh{L}=(M\times \h)\oplus T^*_\pi M\]
in such a way that the summands are Lie subalgebroids. This comes equipped with an $H$-action by Lie algebroid automorphisms preserving the first summand, and inducing the given action on the quotient
$\wh{L}/(M\times \h)\cong L$. Furthermore, letting $(\dd,\g,\h)_\beta$ be the Manin triple for the 
Poisson Lie group $(H,\pi_H)$ (with $\beta\in S^2\dd$ indicating the non-degenerate metric on $\dd^*\cong \dd$), the projection $M\times \h\to \h$ and the (symplectic) moment map 
$T^*_\pi M\to \h^*\cong\g$ combine into an $H$-equivariant Lie algebroid morphism 
\[ f_{\wh{L}}\colon \wh{L}\to \dd.\]
Among the applications developed in  \cite{lu:po} is simple and transparent discussion of Poisson homogeneous spaces. See \cite{bur:di1,bur:qua,kos:poi} for further aspects of Lu's Lie algebroid, and generalizations to quasi-Poisson actions. 

In this paper, we will consider Dirac actions of Dirac Lie groups. A \emph{Dirac manifold} $(M,\AA,E)$ is given by a manifold $M$ with a 
Dirac structure, that is, a Courant algebroid $\AA\to M$ together with an involutive Lagrangian subbundle $E\subseteq  \AA$. 
This implies in particular that $E$ is a Lie algebroid. Morphisms of Dirac manifolds are defined in terms of Lagrangian relations. A \emph{Dirac Lie group} is a Lie group $H$ equipped with a 
Dirac structure $(\AA,E)$, together with a 
multiplication morphism and a unit morphism 
\[ \Mult_\AA\colon (H,\AA,E)\times (H,\AA,E)\da (H,\AA,E),\ \ \ \eps_\AA\colon (\pt,0,0)\da (H,\AA,E)\]
satisfying the usual properties such as associativity. \emph{Dirac actions} of Dirac Lie groups on Dirac manifolds are defined similarly, in terms of  an action morphism
\[ \mathsf{a}_\PP\colon (H,\AA,E)\times (M,\PP,L)\da (M,\PP,L)
\] 
with base map an $H$-action on $M$. These definitions appear simpler than the approach in \cite{lib:dir} using $\VB$-groupoids, but as we will see (Theorem \ref{th:ingroupoid}) 
they are in fact equivalent:
\vskip.1in 
\noindent{\bf Theorem A.} {\it  For any Dirac Lie group $(H,\AA,E)$, the total space of $\AA$ has the structure of a $\VB$-groupoid 
$\AA\rra \AA^{(0)}$ over the group $H\rra \pt$, 
in such a way that $\Mult_\AA$ becomes the groupoid multiplication. Its space of units is 
$\AA^{(0)}=E_e$, the fiber at the group unit of $H$, and $E$ is a $\VB$-subgroupoid with the same space of units. Given a 
Dirac action on $(M,\PP,L)$, one obtains a $\VB$-groupoid action on $\PP$, with $\mathsf{a}_\PP$ as the action map; the subgroupoid $E\subset \AA$ preserves $L\subset \PP$.}
\vskip.1in

Suppose $\mathsf{a}_\PP$ is a given Dirac action of $(H,\AA,E)$ on $(M,\PP,L)$. 
Similar to the case of Poisson actions, the action of $H$ on $M$ lifts to an action on $(\PP,L)$, 
but this lift is usually not an action by Dirac automorphisms. We have the following generalization of Lu's result  (Theorem \ref{th:lu}): 
%In fact one does obtain an action $\bullet$ of $H$ on $\PP$, preserving $L$, but this action does not preserve the Courant algebroid bracket on $\PP$ not the Lie algebroid structure of $L$. 
\vskip.1in 
\noindent{\bf Theorem B.} {\it There is a new Dirac structure $(\wh{\PP},\wh{L})$ over $M$, on which $H$ acts by Dirac automorphisms, such that   
\[ \wh{\PP}=(M\times (\h\oplus\h^*))\oplus \PP,\ \ \ \wh{L}=(M\times \h)\oplus L\]
as vector bundles. The subbundle $(M\times \h^*)\oplus \PP$ is coisotropic and involutive, 
and reduction of $(\wh{\PP},\wh{L})$ with respect to this subbundle recovers 
$(\PP,L)$.}
\vskip.1in 
There is an analogous result, Theorem \ref{th:lulie}, for the category $\LA^\vee$ of Lie algebroids, with morphisms the $\LA$-comorphisms. (Dirac structures give examples by forgetting the ambient Courant algebroid; Poisson structures give examples by taking the cotangent Lie algebroid.) Given an $\LA^\vee$-action of $(H,E)$ 
on $(M,L)$, we show that the Lie algebroid structure on $\wh{L}=(M\times \h)\oplus L$ may be obtained as a quotient 
\[ \wh{L}=(TH\times L)/H\]
for a suitable $H$-action on $TH\times L$. The latter action is not by $\LA$-automorphisms, but it preserves the Lie bracket on \emph{$H$-invariant} sections.
As an application, we find that the space $E^{(0)}=E_e=:\g$ acquires a Lie algebra structure, the groupoid action of $E$ on its space of units is an $\LA^\vee$-action, 
and one obtains an $H$-equivariant Lie algebra triple $(\dd,\g,\h)$ with $\dd=\wh{\g}$.  
In \cite{lib:dir}, it is shown that the Dirac Lie group structures on a Lie group $H$ are classified by
such triples together with an $\ad$-invariant element $\beta\in S^2\dd$ such that $\g$ is $\beta$-coisotropic. 
A conceptual construction of $\beta$ along the lines of Theorem B is given in Section \ref{subsec:aq}. 
We refer to $(\dd,\g,\h)_\beta$ as an $H$-equivariant \emph{Dirac-Manin triple}. Returning to the case of a general Dirac action, we have: 
\vskip.1in
\noindent{\bf Theorem C.}\emph{
There is an $H$-equivariant bundle map $f_{\wh{\PP}}\colon \wh{\PP}\to \dd$, 
compatible with brackets, and with the additional property 
\[  f_{\wh{\PP}}(\gamma_{\wh{\PP}})=\beta,\]
where $\gamma_{\wh{\PP}}\in \Gamma(S^2\wh{\PP})$ is the element given by the 
metric. 
}
\vskip.1in
Here, compatibility with brackets means (cf.~ Section \ref{subsec:compatible}) that the map $f_{\wh{\PP}}$ together with the anchor map 
defines a bracket-preserving map $(\mathsf{a}_{\wh{\PP}},f_{\wh{\PP}})\colon \wh{\PP}\to TM\times \dd$, where the right hand side 
is regarded as a product Lie algebroid.

Similar to Lu's paper \cite{lu:poi}, we can use these results to classify the Dirac actions for the case $M=H/K$. Let $(H,\AA,E)$ be a Dirac Lie group, with corresponding Dirac-Manin triple $(\dd,\g,\h)_\beta$.  Let $\k$ be the Lie algebra of $K$. 
\vskip.1in
\noindent{\bf Theorem D.}\emph{
Dirac actions of $H$ on Dirac manifolds  $(M,\PP,L)$ with $M=H/K$ are classified by 
$K$-equivariant Manin pairs $(\nn,\uu)_{\gamma_\nn}$, with generators $\k\subseteq \uu$, 
together with $K$-equivariant Lie algebra morphisms $f_\nn\colon \nn\to \dd$ extending  the inclusion 
$\k\to \h$ and satisfying $f_\nn(\gamma_\nn)=\beta$.
}
\vskip.1in

%Given $(\PP,L)$, one obtains $\nn$ as the reduction of the coisotropic subspace $\ker(\mathsf{a}_{\wh{\PP}})|_{eK}$, with $\uu$ the reduction of $\wh{L}|_{eK}$ and with $f_\nn$ induced by $f_{\wh{\PP}}$. Conversely, 
Given these data, one recovers $(\PP,L)$ as a reduction of 
$(H\times \nn,\ H\times \uu)$ (an `action Dirac structure') by the action of $K$.  

 The classification takes on a simple form in the following special case. By general facts about $\VB$-groupoid actions (see Appendix \ref{app:vb}), the action of $E$ on $L$ dualizes to a $\VB$-groupoid action of $E^*$ on $L^*$. 
 Since $E$ is `vacant', $E^*$ is a Lie group. It turns out that $E^*$ is actually a Poisson Lie group, and the action is a Poisson action.  It is natural to assume that this action is transitive. We have the following generalization of Drinfeld's classification result, obtained in the thesis \cite{rob:cla}:
\vskip.1in 
 
\noindent{\bf Theorem.} (Robinson \cite{rob:cla}) {\it 
The Dirac structures $(\PP,L)$ on $M=H/K$, together with Dirac actions of $(H,\AA,E)$
such that  $E^*$ acts transitively on $L^*$, are classified by $K$-invariant $\beta$-coisotropic
Lie subalgebras $\cc\subseteq  \dd$ such that $\cc\cap \h=\k$.}
\vskip.1in
Some of our results are new even in the case of Poisson Lie groups, by considering Dirac actions of Poisson Lie groups on Dirac manifolds. An important class of 
Dirac Lie groups to which our results apply, and which is not a Poisson Lie group, 
is provided by the \emph{Cartan-Dirac structure} on any Lie group $H$ with an invariant metric on its Lie algebra $\h$. The Cartan-Dirac structure, discovered independently by Alekseev, Severa, and Strobl in the 1990s, is responsible for the theory of quasi-Hamiltonian spaces \cite{al:mom}, since the moment map condition can be described as a Dirac morphism to $H$ \cite{bur:di}. Its description as a Dirac Lie group (without using that terminology) is given in \cite[Theorem 3.9]{al:pur}; a future project will develop quasi-Hamiltonian spaces for more general Dirac Lie group targets. Quasi-Poisson Lie groups in the sense of \v{S}evera-Valach \cite{sev:lie} may also be considered from the perspective of Dirac Lie groups; see Section \ref{subsubsec:sv}.
Accordingly, Theorem D can serve as a starting point for the classification of the corresponding quasi-Poisson homogeneous spaces. 

We stress that our notion of `Dirac Lie groups' follows \cite{lib:dir}, and is similar to 
\cite{mil:two}, but is different from notions used in \cite{jot:dir,ort:mu}. See 
Section \ref{subsec:jotz}  below. Accordingly, Theorem D overlaps with Jotz'  \cite{jot:dir} classification results for `Dirac homogeneous spaces' only in the Poisson case.
\vskip.2in

\noindent{\bf Acknowledgements.} I would like to thank David Li-Bland for helpful discussions, as well as Patrick Robinson, whose thesis work inspired the techniques developed here. I also thank Madeleine Jotz for pointing out her work \cite{jot:digr} on Dirac Lie groupoids, and the referees for detailed comments.

\section{Dirac geometry}
In this section we review some background material  that will be needed in this paper. Our conventions will follow those of \cite{lib:cou,lib:dir,rob:cla}, to which we refer for a more detailed discussion. Throughout, a non-degenerate symmetric bilinear form on a vector space $V$ will be referred to as a \emph{metric}, and $V$ as a \emph{metrized vector space}. A \emph{metrized Lie algebra} (also known as a  \emph{quadratic Lie algebra}) 
is a Lie algebra with an $\ad$-invariant metric. 

\subsection{Dirac-Manin triples}\label{subsec:dmp}
We begin by describing some of the Lie-algebraic data that enter the classification results for Dirac Lie groups. See \cite[Section 3.2]{lib:dir} for further details. 
\subsubsection{}\label{subsubsec:coiso}
 Let $V$ be a vector space together with an element $\beta\in S^2V$. 
We denote by $\beta^\sharp\colon V^*\to V$ the map $\beta^\sharp(\mu)= \beta(\mu,\cdot)$. A subspace $U\subseteq V$ is called \emph{$\beta$-coisotropic} if $\beta$, viewed as a bilinear form on $V^*$, vanishes on the annihilator $\on{ann}(U)$. That is, 
$\beta^\sharp(\on{ann}(U))\subseteq U$. Equivalently, $\pr_{V/U}(\beta)=0$, where $\pr_{V/U}\colon V\to V/U$ is the quotient map. Note that the diagonal $V_\Delta\subseteq V\times V$ is $(\beta,-\beta)$-coisotropic. Given a linear map $\Phi\colon V\to V'$ with $\phi(\beta)=\beta'$, and 
a subspace $U'\subseteq V'$, the preimage $\Phi^{-1}(U')$ is $\beta$-coisotropic if and only if 
$U'$ is $\beta'$-coisotropic. If $U\subseteq V$ is $\beta$-coisotropic, then $\Phi(U)\subseteq V'$ is $\beta'$-coisotropic. (See \cite[Lemma 6.1.2]{rob:cla}.)

\subsubsection{} 
Let $\dd$ be a Lie algebra, together with an $\ad$-invariant element $\beta\in S^2\dd$. 
We denote the co-adjoint action on $\dd^*$ simply by a bracket, so that 
$\l [\lambda_1,\mu],\ \lambda_2\r=-\l \mu,[\lambda_1,\lambda_2]\r$ for $\lambda_1,\lambda_2\in\dd$ and $\mu\in\dd^*$. Denote by $\dd^*_\beta$ the vector space $\dd^*$, equipped with the Lie bracket
$[\mu_1,\mu_2]_\beta:=[\beta^\sharp(\mu_1),\mu_2]$. 
The coadjoint action of $\dd$ is by derivations of this Lie bracket, 
and the semi-direct product $\wt\dd=\dd\ltimes \dd^*_\beta$ becomes a metrized Lie algebra for the metric 
\begin{equation}
\l \lambda_1+\mu_1,\lambda_2+\mu_2\r=\l\lambda_1,\mu_2\r+\l\lambda_2,\mu_1\r+\beta(\mu_1,\mu_2) 
\end{equation}
%\[  \begin{split}\l \zeta_1+\mu_1,\zeta_2+\mu_2\r_\beta&=\l\zeta_1+\mu_1,\zeta_2+\mu_2\r_0+\beta(\mu_1,\mu_2) ,\\[\zeta_1+\mu_1,\zeta_2+\mu_2]_\beta&=[\zeta_1+\mu_1,\zeta_2+\mu_2]_0+\ad(\beta^\sharp(\mu_1))(\mu_2),\end{split}\]
for $\lambda_i\in\dd,\ \mu_i\in\dd^*$. We will denote by $\wt{\beta}\in S^2\wt\dd$ the element dual to the metric. 
Let $\sz_{\wt{\dd}},\tz_{\wt{\dd}}\colon \dd\ltimes \dd^*_\beta\to \dd$ be the 
maps 
\[ \sz_{\wt{\dd}}(\lambda+\mu)=\lambda,\ \ \ \tz_{\wt{\dd}}(\lambda+\mu)=\lambda+\beta^\sharp(\mu).\]
These are the source and target maps for its structure as an action groupoid $\dd\ltimes \dd^*_\beta\rra \dd$, for the action $\lambda\mapsto \lambda+\beta^\sharp(\mu)$. 
One has 
\[ \sz_{\wt{\dd}}(\wt{\beta})=-\beta,\ \ \tz_{\wt{\dd}}(\wt{\beta})=\beta.\]
%
%As explained in  \cite{lib:dir}, the pairs $(\dd,\beta)$ classify Dirac Lie group structures over $\pt$, in such a way that the graph of the Dirac multiplication morphism coincides with the graph of the groupoid multiplication. 
%The Lie subalgebra $\dd\subseteq \wt{\dd}$ is Lagrangian, and has  $\dd^*_\beta$ as a complementary Lie algebra ideal. The orthogonal space $(\dd^*_\beta)^\perp$ is again a complementary ideal.  Letting $f_{\wt{\dd}}\colon \wt{\dd}\to \dd$ be the Lie algebra morphism with kernel $(\dd^*_\beta)^\perp$, 
%The dual map  is just the given inclusion of $\dd^*$ as $\dd^*_\beta\subseteq \wt{\dd}\cong  \wt{\dd}^*$. 
%Explictly, $f_{\wt{\dd}}(\lambda+\mu)=\lambda+\beta^\sharp(\mu)$.
%

\subsubsection{}\label{subsubsec:LApair}
A  \emph{Dirac-Manin pair}  $(\dd,\g)_\beta$ is a Lie algebra $\dd$ together with 
an $\ad$-invariant element $\beta\in S^2\dd$ and a $\beta$-coisotropic Lie subalgebra $\g\subseteq \dd$. If $\beta$ is non-degenerate (so that it defines a metric on $\dd$), and $\g$ is Lagrangian, then one calls  $(\dd,\g)_\beta$ a \emph{Manin pair}.

Given a Dirac-Manin pair $(\dd,\g)_\beta$, the pre-image  $\sz_{\wt{\dd}}^{-1}(\g)=\g\ltimes \dd^*_\beta\subseteq  \wt{\dd}$ is a
$\wt{\beta}$-coisotropic Lie subalgebra. The orthogonal space is an ideal in 
$\sz_{\wt{\dd}}^{-1}(\g)$, hence 
\[ \q=(\g\ltimes \dd^*_\beta)/(\g\ltimes \dd^*_\beta)^\perp\]
is a metrized Lie algebra. Let $\gamma_\q\in S^2\q$ be given by the reduced metric on $\q$, and let $\g\subseteq \q$ be embedded as the reduction of $\dd\subset \wt{\dd}$. 
Then $(\q,\g)_{\gamma_\q}$ is a Manin pair. The map $\tz_{\wt{\dd}}$ descends to a Lie algebra morphism 
\[ f_\q\colon \q\to \dd,\] 
restricting to the inclusion on $\g\subseteq \q$, and with $f_\q(\gamma_\q)=\beta$. 
%One thus obtains a Dirac morphism $(\dd\ltimes \dd^*_\beta,\dd)_{\wt{\beta}}\da (\q,\g)_{\gamma_\q}$. 

\subsubsection{}\label{subsec:hc} {\it Harish-Chandra pairs, dressing action.}
Let $\dd$ be a Lie algebra, and $H$ a Lie group acting on $\dd$ by Lie algebra automorphisms. An $H$-equivariant inclusion 
of the Lie algebra $\h$  is said to define \emph{generators} if the differential of the $H$-action on $\dd$ coincides with the adjoint action of $\h\subseteq \dd$. In this case, we will denote the $H$-action on $\dd$ simply by $h\mapsto \Ad_h$, and call $(\dd,H)$  a \emph{Harish-Chandra pair}.
A \emph{morphism of Harish-Chandra pairs} $(\cc,K)\to (\dd,H)$ is a Lie group morphism $\phi\colon K\to H$, together with a Lie algebra morphism $f\colon \cc\to \dd$, such that 
$f$ is $K$-equivariant and satisfies $f|_\k=T_e\phi\colon \k\to \h$.

Given a Lie algebra $\g$ complementary to $\h$, we will call $(\dd,\g,\h)$ an \emph{$H$-equivariant Lie algebra triple}. In this situation one obtains a Lie algebra action of $\dd$ 
on $H$, extending the action $\tau\mapsto \tau^L$ of $\h$ given by the left-invariant vector fields. Denote by $\pr_\g$ and $\pr_\h$ the projections from $\dd$ onto the two summands. In terms of left trivialization $TH=H\times \h$, the action 
$\varrho\colon \dd\to \Gamma(TH)$ is given by 
\begin{equation} \label{eq:action}
\varrho(\lambda)_h=\Ad_{h^{-1}}\pr_\h(\Ad_h \lambda),\ \ \ \lambda\in \dd.\end{equation}
In the context of Poisson Lie groups, this action 
is known as the (right) \emph{dresssing} action. (There is also a left dressing action $(h,\lambda)\mapsto -\pr_\h(\Ad_{h^{-1}}\lambda)$, but we will not use it. )

\subsubsection{}\label{subsub:vacantLA}
A \emph{Dirac-Manin triple} $(\dd,\g,\h)_\beta$ is a Dirac-Manin pair $(\dd,\g)_\beta$ together with a Lie subalgebra 
$\h\subseteq  \dd$ complementary to $\g$. It is called  a 
\emph{Manin triple} if $(\dd,\g)_\beta$ is a Manin pair and 
$\h$ is Lagrangian. A Dirac Manin triple $(\dd,\g,\h)_\beta$ with an action of $H$ on $\dd$ is called 
an \emph{$H$-equivariant Dirac-Manin triple} if $(\dd,H)$ is a Harish-Chandra pair, and 
$\beta$ is $H$-invariant.

\subsection{Courant algebroids}\label{subsubsec:courant}
A \emph{Courant algebroid} \cite{liu:ma} is a vector bundle $\AA\to M$, equipped with a non-degenerate symmetric fiberwise bilinear form 
$\l\cdot,\cdot\r$ called the \emph{metric}, a bundle map $\mathsf{a}\colon \AA\to TM$ called the \emph{anchor}, and a bilinear map 
$\Cour{\cdot,\cdot}\colon \Gamma(\AA)\times \Gamma(\AA)\to \Gamma(\AA)$ 
called the \emph{Courant bracket}, satisfying  the following axioms, 
for all sections $\sig_1,\sig_2,\sig_3\in \Gamma(\AA)$:
\begin{enumerate}
\item $\Cour{\sig_1,\Cour{\sig_2,\sig_3}}=\Cour{\Cour{\sig_1,\sig_2},\sig_3}+
\Cour{\sig_2,\Cour{\sig_1,\sig_3}}$, 
\item $\mathsf{a}(\sig_1)\l \sig_2,\sig_3\r=\l \Cour{\sig_1,\sig_2},\sig_3\r +\l \sig_1, \Cour{\sig_2,\sig_3} \r$, 
\item $\Cour{\sig_1,\sig_2}+\Cour{\sig_2,\sig_1}=\mathsf{a}^* \mbox{d} \l\sig_1,\sig_2\r$. 
\end{enumerate}
(In the last condition, the metric is used to identify $\AA^*$ with $\AA$.) The bracket is sometimes also called the \emph{Dorfman bracket}, after \cite{dor:dir}. The conditions imply \cite{uch:rem} that 
the anchor map is bracket-preserving, and that 
\begin{enumerate}
\item[(d)]  $\Cour{\sigma_1,f\sigma_2}=f\Cour{\sigma_1,\sigma_2}+(\L_{\mathsf{a}(\sigma_1)}f) \sigma_2$
\end{enumerate}
for $f\in C^\infty(M)$.
Note that a Courant algebroid over $M=\pt$  is the same thing as a metrized Lie algebra. For a Courant algebroid
$\AA$, changing the sign of the metric defines a Courant algebroid $\ol{\AA}$. 
The \emph{standard Courant algebroid} over $M$ is the direct sum $\T M=TM \oplus T^* M$, with the metric given by the 
pairing between vectors and covectors, with anchor the projection to the first summand,  and with the Courant bracket 
\[ \Cour{X_1+\alpha_1,\,X_2+\alpha_2}=[X_1,X_2]+\L(X_1)\alpha_2-\iota(X_2)\mbox{d}\alpha_1\]
for vector fields $X_1,X_2\in\Gamma(T M)$ and 1-forms $\alpha_1,\alpha_2\in\Om^1(M)$.  
This is essentially the setting of Courant's original work \cite{cou:di,couwein:beyond}.

\subsection{Dirac manifolds}
A \emph{Dirac structure} \cite{liu:ma} on a manifold $M$ is a Courant algebroid $\AA$ over $M$ together with a 
Lagrangian subbundle $E\subseteq \AA$ that is \emph{involutive}, that is, $\Gamma(E)$ is closed under the Courant bracket. 
For any Dirac structure, the Courant bracket restricts to a Lie bracket on $\Gamma(E)$, making $E$ into a Lie algebroid. 
The triple $(M,\AA,E)$ will be called a \emph{Dirac manifold}. We often refer to $M$ itself as a Dirac manifold, and to  
$(\AA,E)$ as its Dirac structure. 
%Given two Dirac manifolds, one can form their product; the trivial Dirac manifold $(\pt,0,0)$ serves as a neutral element for this product.
For the standard Courant algebroid, the Dirac structures $(\T M,\,E)$ with the property $E\cap TM=0$ are in 1-1 correspondence with Poisson structures $\pi_M$; the correspondence takes the Poisson structure to $E=\on{Gr}(\pi_M)$, the graph of the bundle map $\pi_M^\sharp\colon T^*M\to TM,\ \alpha\mapsto \pi_M(\alpha,\cdot)$. 

\subsection{The category $\ca{DIR}$}\label{subsec:dircat}
For any Courant algebroid
$\AA$, we denote by $\ol{\AA}$ the Courant algebroid obtained by 
changing the sign of the metric. 
A \emph{Courant morphism}
$R\colon \AA_1\da \AA_2$ \cite{al:der,bur:cou} between two Courant algebroids is a smooth map $\Phi\colon M_1\to M_2$ 
of the underlying manifolds, together with a Lagrangian subbundle $R\subseteq  \AA_2\times \ol{\AA}_1$ 
along the graph $\on{Gr}(\Phi)$, such that $(\mathsf{a}_2\times\mathsf{a}_1)(R)\subseteq  \on{Gr}(T\Phi)$, and satisfying 
an integrability condition: if two sections of $\AA_2\times \ol{\AA}_1$ restrict over $\on{Gr}(\Phi)$ to a section of 
$R$, then so does their Courant bracket. We shall write $x_1\sim_R x_2$ if and only if $(x_2,x_1)\in R$.  Since $R$ is Lagrangian, we have 
\begin{equation}\label{eq:-1}
x_1\sim_R x_1',\ x_2\sim_R x_2'\ \ \ \Rightarrow \ \ \ \l x_1,x_1'\r=\l x_2,x_2'\r.
\end{equation}
For sections $\sig_i\in \Gamma(\AA_i)$, write $\sig_1\sim_R\sig_2$ if and only if $(\sig_2\times\sig_1)|_{\on{Gr}(\Phi)}\in \Gamma(R)$. The integrability of $R$ implies 
\begin{equation}\label{eq:0}
\sig_1\sim_R \sig_2,\ \sig_1'\sim_R \sig_2'\ \ \ \Rightarrow \ \ \ 
\Cour{\sig_1,\sig_1'}\sim_R \Cour{\sig_2,\sig_2'}.\end{equation}
Composition of Courant morphisms is defined as a composition of relations, provided that 
certain transversality conditions are satisfied (cf.~ \cite{lib:dir}). 
 A \emph{Dirac morphism} \cite{bur:cou}
\begin{equation}\label{eq:dmor} (\Phi,R)\colon (M_1,\AA_1,E_1)\da (M_2,\AA_2,E_2)\end{equation}
is  a Courant morphism $R\colon \AA_1\da \AA_2$,  with base map 
$\Phi\colon  M_1\to M_2$, with the following property: 
\begin{quote}($\ca{D}$)\ \ For all $m\in M$ and every element $x_2\in (E_2)_{\Phi(m)}$, there is a \emph{unique} element $x_1\in (E_1)_m$ with $x_1\sim_R x_2$. 
\end{quote}
Given another Dirac morphism 
$(\Phi',R')\colon (M_2,\AA_2,E_2)\da (M_3,\AA_3,E_3)$, the composition $(\Phi'\circ \Phi,R'\circ R)$ is a 
well-defined Dirac morphism $(M_1,\AA_1,E_1)\da (M_3,\AA_3,E_3)$ -- the transversality conditions 
mentioned above are automatic due to the uniqueness condition in ($\ca{D}$). We hence have a well-defined \emph{category of Dirac manifolds}. Any Dirac morphism induces a \emph{comorphism} $E_1\da E_2$ of Lie algebroids: A bundle map $\Phi^*E_2\to E_1$ such that the induced map on sections is a Lie algebra morphism, and is compatible with the anchor. 
For any map $\Phi\colon M_1\to M_2$, the direct sum of the graphs of the differential $T\Phi$ and of its dual $(T\Phi)^*$ defines a Courant morphism 
\[\T \Phi\colon \T M_1\da \T M_2.\] 
If $M_1,M_2$ are Poisson manifolds, then $\Phi$ is a Poisson map if and only if it defines a Dirac morphism
$\T \Phi\colon (M_1,\T M_1,\on{Gr}(\pi_1))\da (M_2,\T M_2,\on{Gr}(\pi_2))$. 
We will use the following 
notation for the categories that we are interested in: 
\[ \begin{split}
\ca{POI} &  \mbox{ -- Poisson manifolds and Poisson maps}\\
\ca{DIR} &  \mbox{ -- Dirac manifolds and Dirac morphisms }\\
\ca{LA}^\vee & \mbox{ -- Lie algebroids and comorphisms of Lie algebroids}
\end{split}
\]
We have a functor $\ca{POI} \to \ca{DIR}$ taking $(M,\pi)$ to $(M,\T M,\on{Gr}(\pi))$, 
and a functor $\ca{DIR}\to \ca{LA}^\vee$ taking $(M,\AA,E)$ to $(M,E)$. By composition, one obtains the functor $\ca{POI}\to \ca{LA}^\vee$ associating to a Poisson manifold its cotangent Lie algebroid $\on{Gr}(\pi)\cong T^*_\pi M$. There is also  a functor $\ca{LA}^\vee\to \ca{POI}$ in the opposite direction, taking a Lie algebroid $E$ to the dual bundle $E^*$ with a linear Poisson structure on its total space; as is well-known, the composition gives the \emph{tangent lift} of a Poisson structure from $M$ to $TM$. 

One also encounters \emph{Dirac comorphisms} (also called \emph{backward Dirac} \cite{bur:di}): These are defined similar to Dirac morphisms, but with ($\ca{D}$) replaced by the condition 
\begin{quote}
($\ca{D}^\vee$)\ \ For all $m\in M$ and every $x_1\in (E_1)_m$, there is a unique $x_2\in (E_2)_{\Phi(m)}$ such that $x_1\sim_R x_2$.  
\end{quote}
This defines a category $\ca{DIR}^\vee$ with a functor to the category $\ca{LA}$ of Lie algebroids and Lie algebroid morphisms.

\subsection{Action Dirac structures}\label{subsec:ads}
Let $\q$ be a metrized Lie algebra, with metric given by an $\ad$-invariant element $\gamma\in S^2\q$. 
Given an action $\varrho\colon \q\to \Gamma(TM)$ on a manifold $M$,  with the property that all stabilizer algebras 
for the action are coisotropic in $\q$, the trivial bundle $M\times\q$ has a well-defined 
structure of an  \emph{action Courant algebroid} \cite{lib:cou}. The anchor $\mathsf{a}\colon M\times \q\to TM$ coincides with the action map, 
and the metric and Courant bracket extend the  metric and Lie bracket on constant sections. 
Any Lagrangian Lie subalgebra $\g\subseteq \q$ defines an \emph{action Dirac structure} 
\begin{equation}\label{eq:triple}
 (M\times\q,\ M\times\g)
 \end{equation}
on $M$. As an example, consider any Lie algebra $\g$, and take $\q=\g\ltimes\g^*$ with the metric given by the pairing. 
Its natural action on $M=\g^*$ has coisotropic stabilizers, hence $M=\g^*$ becomes a Dirac manifold. In this example, 
the Dirac structure is canonically isomorphic to $(\T \g^*,\on{Gr}(\pi_{\g^*}))$ with the usual Lie-Poisson structure 
$\pi_{\g^*}$.

\subsection{Actions of Lie algebroids and Courant algebroids}\label{subsec:actLC}
An \emph{action of a Lie algebroid $E\to M$ along a map $\Phi\colon N\to M$} is a
comorphism of Lie algebroids 
\[ \xymatrix{ TN \ar@{-->}[r]\ar[d] & E\ar[d]\\ N\ar[r]_{\Phi} & M
}\]
Given such an action, the vector bundle pull-back $\Phi^* E$ 
has a Lie algebroid structure, in such a way that the natural map  $\Phi^*E\to E$
is a Lie algebroid comorphism. Similarly \cite{lib:cou}, an action of a Courant algebroid $\AA\to M$ on $N$ is given by a vector bundle comorphism $TN\da \AA$, such that the induced map $\varrho_N\colon \Gamma(\AA)\to \Gamma(TN)$ preserves brackets and is compatible with the anchor: 
\[ \varrho_N(\Cour{\sigma_1,\sigma_2})=[\varrho_N(\sigma_1),\varrho_N(\sigma_2)],\ \ \ T\Phi\circ \varrho_N=\mathsf{a}_\AA.\]
If furthermore the stabilizers of this action (i.e., the kernels of $\AA_{\Phi(n)}\to T_nN$)
are coisotropic in $\AA$, then 
the vector bundle pullback $\Phi^*\AA$ has a Courant algebroid structure, 
and comes with a morphism of Courant algebroids $\Phi^*\AA\da \AA$. 
In the case of a Dirac structure $(\AA,E)$ this becomes a Dirac morphism 
$(\Phi^*\AA,\Phi^*E)\da (\AA,E)$. 

\subsection{Maps compatible with brackets}\label{subsec:compatible}
Let $\AA$ be a Courant algebroid with base $M$, and let $\dd$ be a Lie algebra. A bundle map 
\[ f\colon \AA\to \dd\]
(with base map $M\to \pt$) will be called \emph{compatible with brackets} if the map 
$(\mathsf{a}_\AA,f)\colon \Gamma(\AA)\to \Gamma(TM\times \dd)$ is bracket preserving, 
where $TM\times \dd$ has the product Lie algebroid structure. That is, for all  $\sigma_1,\sigma_2\in\Gamma(\AA)$, 
\begin{equation}\label{eq:bracketpreserving}
 f(\Cour{\sigma_1,\sigma_2})=[f(\sigma_1),f(\sigma_2)]
+\L_{\mathsf{a}_\AA(\sigma_1)}f(\sigma_2)-\L_{\mathsf{a}_\AA(\sigma_2)}f(\sigma_1).
\end{equation}
Note that since the right hand side is skew-symmetric in $\sigma_1,\sigma_2$, this equation means in particular that 
\begin{equation}\label{eq:rema}
 f\circ \mathsf{a}_\AA^*=0,\end{equation}
by property (c) in the definition of Courant algebroids. 

Suppose in addition that $\dd$ has an $\ad$-invariant element $\beta\in S^2\dd$, and that 
$f(\gamma_\AA)=\beta$, where $\gamma_\AA\in \Gamma(S^2\AA)$ is the section 
determined by the metric. Together with \eqref{eq:rema}, this means $(\mathsf{a}_\AA,f)(\gamma_\AA)=\beta$, viewed as a section of $S^2(TM\times \dd)$. Suppose $\varrho_Q\colon \dd\to \Gamma(TQ)$ is an action of $\dd$ on a manifold $Q$, with $\beta$-coisotropic stabilizers. By composition, we obtain an action of $\AA$ on $Q\times M$ along $\pr_M$, defined by the comorphism
\[ T(Q\times M) \da \dd\times TM \da \AA\]
(where we regard the morphism $(\mathsf{a}_\AA,f)$ as a comorphism in the opposite direction). That is, 
$ \varrho_{Q\times M}(\sigma)=\varrho_Q(f(\sigma))+\mathsf{a}_\AA(\sigma)$. 
By Section \ref{subsec:actLC}, it follows that $\pr_M^*\AA=Q\times \AA$ is a Courant algebroid over $Q\times M$.  If $(\AA,E)$ is a Dirac structure, then we obtain a Dirac structure $(Q\times \AA,\,Q\times E)$. The action Dirac structures from Section \ref{subsec:ads} may be regarded as such pullbacks. See \cite[Section 4]{lib:cou}.

\subsection{Pull-backs}\label{subsec:pull1}
There is another kind of pullback  for Lie algebroids, Courant algebroids, and Dirac structures, as follows. Suppose $\Phi\colon N\to M$ is a smooth map, and $E$ is a Lie algebroid over $M$. Suppose that  the anchor $\mathsf{a}_E$ 
is transverse to $T\Phi$. Then the fiber product 
\[ \xymatrix{ \Phi^!E \ar[r]\ar[d] & E\ar[d]^{\mathsf{a}_E} \\ TN \ar[r]_{T\Phi} & TM
}
\]
is a vector bundle $\Phi^!E\to N$, with a canonical Lie algebroid structure \cite{mac:gen} (cf.~ \cite{lib:cou}). Here the left vertical map serves as the anchor, and the upper horizontal map 
defines a Lie algebroid morphism to $E$. Similarly, if $\AA\to M$ is a Courant algebroid whose anchor $\mathsf{a}_\AA$ is transverse 
to $T\Phi$, then one obtains a Courant algebroid $\Phi^!\AA=C/C^\perp$ where $C$ is the fiber product of $\AA$ with $\T N$ over $TM$ 
\cite{lib:dir}. Finally, given a Dirac structure $(\AA,E)$ over $M$ such that $\mathsf{a}_E$ is transverse to $T\Phi$, we obtain a \emph{pull-back Dirac structure} \cite{lib:cou} $(\Phi^!\AA,\, \Phi^! E)$ on $N$, with a Dirac \emph{co}morphism 
\[ (\Phi,R)\colon (N,\Phi^!\AA,\Phi^!E)\da (M,\AA,E).\]
%Here $\Phi^!E=E\cap C/E\cap C^\perp$ is isomorphic to the pullback as a Lie algebroid. 
If $N$ is a direct product $N=Q\times M$, and $\Phi$ is projection to the second factor, then 
$\Phi^!\AA=\T Q\times \AA$.

As another special case, $\Phi^!(\T M)=\T N$. If $E\subseteq  \T M$ is a Dirac structure, then $\Phi^!E$ is a Poisson structure if and only if $E$ is a Poisson structure and $\Phi$ is an immersion.

\subsection{Reductions}
Let $(M,\AA,E)$ be a $K$-equivariant Dirac manifold. That is, $K$ is a Lie group  acting on $M$, with a lift to an action by 
Courant automorphisms of $\AA$, preserving the subbundle $E$. For $\nu\in\k$ let $\L_\AA(\nu)$ be the 
resulting Lie derivatives on $\Gamma(\AA)$. 
A $K$-equivariant bundle map 
\[ \varrho_\AA\colon M\times \k\to \AA\]
is said to define \emph{generators} if $\L_\AA(\nu)=\Cour{\varrho_\AA(\nu),\cdot}$ for all $\nu\in\k$. We will only 
consider the case that the generators are isotropic, that is,  $\l\varrho_\AA(\nu),\varrho_\AA(\nu)\r=0$ for all $\nu$. 
Suppose the $K$-action is a principal action, and let $J=\varrho_\AA(M\times\k)$. 
One defines a \emph{reduced Courant algebroid}  $\AA\qu K=(J^\perp/J)/K$. Assuming that $E\cap J$ has constant rank (or equivalently, 
$E\cap J^\perp$ has constant rank), one defines $E\qu K=\big((E\cap J^\perp)/(E\cap J)\big)/K$. Then $(\AA\qu K,\  E\qu K)$ is a Dirac structure on $M/K$. 
See Bursztyn, Cavalcanti, and Gualtieri \cite{bur:cou} for details and much more general reduction procedures; see also 
Marrero, Padron, and Rodrigues-Olmos
\cite{mar:red} for
a detailed discussion of reductions of Lie algebroids. 
If $E\cap J=0$ (resp., $J\subseteq E$) then the quotient map $p\colon M\to M/K$ 
lifts to a Dirac morphism (resp. \emph{co}morphism)  
\[ (M,\AA,E)\da (M/K,\  \AA\qu K,\  E\qu K).\] 
%In particular, if $E$ is the graph of a $K$-invariant Poisson structure, and taking for $\varrho_\AA$ the generating vector fields viewed as sections of $TM$, then $E\qu K$ is the graph of the induced Poisson structure on $M/K$.
The pull-back operation $p^!$ is a right inverse to reduction, in the following 
sense:
\begin{proposition}
Let $p\colon M\to B$ be a principal $K$-bundle, and $(\PP,\,L)$ a Dirac structure over the 
base $B$. Then the pull-back Dirac structure $(p^! \PP,\, p^!L)$ on $M$ 
is $K$-equivariant, with isotropic generators, and its reduction by $K$ is canonically isomorphic to 
$(\PP,\,L)$. 
\end{proposition}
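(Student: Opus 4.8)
The plan is to verify the claim fiber-by-fiber in a local trivialization, reducing everything to the direct-product situation already analyzed in the excerpt. Since $p\colon M\to B$ is a principal $K$-bundle, it is locally trivial: over a trivializing open set $U\subseteq B$ we have $p^{-1}(U)\cong U\times K$, with $p$ the projection and $K$ acting by right multiplication on the second factor. Because reduction and pull-back are both local over $B$ and natural with respect to restriction to open sets, it suffices to treat the trivial bundle $M=B\times K\to B$. Here the special case recorded at the end of Section \ref{subsec:pull1} applies: for projection $\Phi=p$ one has $p^!\PP=\TG\times \PP$ (with $Q=K$), so that $p^!\PP=(\T K)\times \PP$ and, tracing through the fiber-product construction, $p^!L=(\T K)|_{\text{fiber}}\cap\dots$; more precisely $p^!L$ is the pull-back Dirac structure whose fibers combine $L$ with the standard structure on $\T K$.

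First I would make the $K$-equivariant structure explicit. The right $K$-action on $M=B\times K$ lifts canonically to $\T K\times \PP$ via the tangent and cotangent lifts of right translation on the $K$-factor, acting trivially on $\PP$; this is an action by Courant automorphisms preserving $p^!L$, establishing $K$-equivariance. For the generators, each $\nu\in\k$ produces the right-invariant vector field $\nu^R$ on $K$ together with its pairing data in $\T K=TK\oplus T^*K$; I would set $\varrho_{p^!\PP}(\nu)=\nu^R\oplus 0\in \Gamma(T K)\oplus \Gamma(T^*K)\subseteq\Gamma(\T K)$, extended by zero on the $\PP$-factor. Because this section lies purely in the $TK$-summand of the standard Courant algebroid $\T K$, its self-pairing vanishes (vectors pair to zero among themselves), so the generators are isotropic; and the identity $\L(\nu)=\Cour{\varrho_{p^!\PP}(\nu),\cdot}$ follows from the defining formula for the Courant bracket on $\T K$, since bracketing with a vector field is exactly the Lie derivative.

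Next I would compute the reduction. With $J=\varrho_{p^!\PP}(M\times\k)$ equal to the span of the right-invariant vector fields (embedded in the $TK$-summand), the vertical tangent bundle of $p$ is exactly $J$, so $J^\perp=J\oplus (\ker \text{(vertical cotangents)})\oplus\PP$; concretely $J^\perp/J$ recovers $TB\oplus T^*B\oplus$ the fiber of $\PP$, and after quotienting by $K$ one lands on $\T B\times_B$ nothing extra, i.e.\ on $\PP$ itself. The analogous computation with $E\cap J^\perp$ and $E\cap J$ applied to $p^!L$ returns $L$. This is where the bookkeeping must be done carefully, so the main obstacle is verifying that the canonical isomorphism $(p^!\PP\qu K,\ p^!L\qu K)\cong(\PP,L)$ is not merely a fiberwise vector-bundle isomorphism but respects the metric, anchor, Courant bracket, and involutivity; that is, that it is an isomorphism of Dirac structures. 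I expect this to follow from the naturality of the pull-back construction (Section \ref{subsec:pull1}) together with the fact that $p^!$ composed with reduction is, by construction, the identity at the level of the defining fiber products. Finally I would note that the local isomorphisms glue, since all constructions are canonical and $K$-equivariant, hence independent of the choice of trivialization, yielding the global canonical isomorphism $(p^!\PP\qu K,\ p^!L\qu K)\cong(\PP,L)$.
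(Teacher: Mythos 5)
Your overall strategy (trivialize locally, reduce to the product case $M=B\times K$ where $p^!\PP=\T K\times\PP$, then glue) is genuinely different from the paper's, but as written it has a real gap, and it sits exactly at the point you yourself flag as ``the main obstacle''. The statement that the reduced bundle is isomorphic to $\PP$ \emph{as a Courant algebroid} (metric, anchor, bracket) and that the local isomorphisms are canonical enough to glue is the entire content of the proposition, and you dispose of both with ``I expect this to follow from naturality''. It does not follow for free: the identification $p^!\PP|_{p^{-1}(U)}\cong \T K\times \PP|_U$ is \emph{not} canonical --- it uses the product splitting $T(U\times K)\cong TU\times TK$, which changes when the trivialization changes --- so the assertion that ``all constructions are canonical and $K$-equivariant, hence independent of the choice of trivialization'' is false as stated. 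To repair your argument you would have to exhibit the reduced isomorphism in trivialization-free terms and verify bracket compatibility, which is precisely what the paper does, globally and without trivializations: it takes the Lagrangian subbundle $R\subseteq \PP\times\ol{p^!\PP}$ consisting of the pairs $(y,[(y,z)])$, where $(y,z)$ lies in the coisotropic fiber-product bundle $C\subseteq \PP\times\T M$ defining $p^!\PP=C/C^\perp$; this $R$ is a Courant morphism $R\colon p^!\PP\da\PP$ by construction of the pull-back. It then observes that the reduction generators $x\in J$ satisfy $x\sim_R 0$, so $R$ descends to a Dirac morphism $(B,(p^!\PP)\qu K,(p^!L)\qu K)\da(B,\PP,L)$, which is surjective and hence, by a rank count, an isomorphism. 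Because $R$ is a Courant morphism from the start, compatibility with metric, anchor and bracket never has to be checked by hand.

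There is also a concrete computational error in your reduction step: $J^\perp/J$ does \emph{not} recover $TB\oplus T^*B\oplus \PP$. In the product picture $J=TK\times 0\subseteq \T K\times \PP$, and since $TK$ is Lagrangian in $\T K$ one gets $J^\perp=TK\times\PP$, so $J^\perp/J\cong\PP$ fiberwise already before dividing by $K$; passing to the $K$-quotient only identifies the copies of $\PP_b$ sitting over the points of a single fiber $p^{-1}(b)$. (A rank count confirms this: $\on{rank}(p^!\PP)=\on{rank}(\PP)+2\dim K$, so $\on{rank}(J^\perp/J)=\on{rank}(\PP)$, leaving no room for a $TB\oplus T^*B$ summand.) A minor further point: the vector field generating the right translation $k\mapsto kg$ is the \emph{left}-invariant field $\nu^L$, not $\nu^R$; this does not affect isotropy, but it matters if you want your generators to be the canonical action vector fields, which is what would make them trivialization-independent.
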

\begin{proof}
Let $C\subseteq  \PP\times \T M$ be the coisotropic subbundle along $\on{Gr}(p)$, given as 
the fiber product over $TB$.
By definition, 
\[ p^!\PP=C/C^\perp,\ \ p^!L=(L\times TM)\cap C\,\big/(L\times TM)\cap C^\perp.\]
For $(y,z)\in C$, let $[(y,z)]\in p^!\PP$ be its equivalence class.  The 
Lagrangian subbundle $R\subseteq  \PP\times \ol{p^!\PP}$, consisting of all 
$(y,[(y,z)])$ with $(y,z)\in C$, defines the 
Courant morphism $R\colon p^!\PP\da \PP$. Consider the canonical action of $K$ on $\T M$, with generators $\k\to \Gamma(TM)\subseteq\Gamma(\T M)$
given by the action on $M$. Its direct product with the \emph{trivial} action on $\PP$ 
preserves $C$, and its generators descends to generators $\k\to \Gamma(C/C^\perp)$. Let $J\cong M\times\k$ be the 
isotropic subbundle of $C/C^\perp$ spanned by the generators. Then $J\subseteq p^!L\subseteq C/C^\perp$, and the reduction $(p^! L)\qu K \subseteq (p^!\PP)\qu K$ is defined. Since elements $x\in J$ satisfy $x\sim_R 0$, the morphism 
$R$ descends to a 
Dirac morphism 
\[ (\on{id}_B,\,R\qu K)\colon \big(B,\,(p^!\PP)\qu K,\,(p^!L)\qu K\big)\da (B,\,\PP,\,L).\]
Since any element $y\in \PP$ satisfies $[(y,v)]\sim_R y$, where 
$v\in TM$ is any lift of $\mathsf{a}_{\PP}(y)\in TB$, the morphism $R\qu K$ is surjective. Hence, by dimension considerations it is an isomorphism.
\end{proof}

\section{Dirac Lie groups and Dirac actions}
\subsection{Definitions}
Poisson Lie groups do not conform to the official definition of \emph{group objects} in the category $\ca{POI}$ of Poisson manifolds: The product of Poisson manifolds is not a direct product in the categorical sense, and in any case the inverse map is anti-Poisson rather than Poisson. See Blohmann and Weinstein \cite{blo:gro} for a detailed discussion. What makes them `group-like' objects is that they come with an associative  \emph{multiplication morphism} and a \emph{unit morphism}, satisfying the usual properties. (The existence of an 
inverse, and the fact that it is anti-Poisson,  are automatic.)  In a similar spirit, we can 
define \emph{Dirac Lie groups} to be `group-like' objects in the category $\ca{DIR}$ of Dirac manifolds. 
For a Lie group $H$, we denote by $\Mult_H\colon H\times H\to H$ the multiplication map, and by 
$\eps_H\colon \pt\to H$ the unit map (that is, the inclusion of the group unit).
\begin{definition}\label{def:dir}
\begin{enumerate}
\item\label{it:aa1}
A \emph{Dirac Lie group} is a Lie group $H$ with a Dirac structure $(\AA,E)$, together with Dirac morphisms 
\[ \Mult_\AA\colon (H,\AA,E)\times (H,\AA,E)\da (H,\AA,E)\] 
and 
\[ \eps_\AA\colon (\pt,0,0)\da (H,\AA,E),\]
with base maps $\Mult_H$ and $\eps_H$ respectively, 
 such that $\Mult_\AA$ is associative and $\eps_\AA$ is a two-sided unit:
\[ \Mult_\AA\circ (\Mult_\AA\times \id_\AA)=\Mult_\AA\circ (\id_\AA\times \Mult_\AA),\]
\[ \Mult_\AA\circ (\eps_\AA\times \id_\AA)=\id_\AA=\Mult_\AA\circ (\id_\AA\times \eps_\AA).\]
\item\label{it:aa2}
A \emph{Dirac action}  of $(H,\AA,E)$ on a Dirac manifold $(M,\PP,L)$ is a Dirac morphism 
\[\ca{A}_\PP\colon (H,\AA,E)\times (M,\PP,L)\da (M,\PP,L),\]
with base map $\mathsf{a}_M\colon H\times M\to M$ an $H$-action, satisfying 
\[ \begin{split}
\ca{A}_\PP\circ (\on{id}_\AA\times \ca{A}_\PP)&=\ca{A}_\PP\circ (\Mult_\AA\times \on{id}_\PP),\\
\ca{A}_\PP\circ (\eps_\AA\times \id_\PP)&=\id_\PP.
\end{split}\]
\end{enumerate}
\end{definition}
The following result shows that Dirac actions can always be described as actions of 
$\VB$-groupoids. We refer to Appendix \ref{app:vb} for background and notation for 
$\VB$-groupoids and their actions.

\begin{theorem}\label{th:ingroupoid}
\begin{enumerate}
\item\label{it:1a}
For any Dirac Lie group $(H,\AA,E)$, the Courant algebroid $\AA$ is naturally a $\VB$-groupoid  $\AA\rra \AA^{(0)}$ with base the group $H\rra \pt$, and with units 
$\AA^{(0)}=E_e$,  
in such a way that $\Mult_\AA$ becomes the groupoid multiplication. The subbundle $E$ is a vacant 
$\VB$-subgroupoid,  with the same space of units $E^{(0)}=E_e$. 
\item \label{it:1b}
For a Dirac action of $(H,\AA,E)$ on a Dirac manifold $(M,\PP,L)$ the bundle $\PP$ is naturally a 
$\VB$-groupoid module over $\AA$, in such a way that $\mathsf{a}_\PP$ becomes the groupoid module action. 
The subbundle $L$ is a submodule over the subgroupoid $E$.
\end{enumerate}
\end{theorem}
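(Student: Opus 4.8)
The plan is to read each piece of the $\VB$-groupoid structure directly off the Lagrangian relations underlying $\Mult_\AA$ and $\eps_\AA$, and then to verify the groupoid axioms by translating the associativity and unit identities of Definition \ref{def:dir} into the corresponding groupoid identities. Write $R_\Mult \subseteq \AA \times \overline{\AA \times \AA}$ (along $\Gr(\Mult_H)$) and $R_\eps \subseteq \AA \times \overline{0}$ (along $\{e\}$) for the relations defining the two morphisms. Since these are vector subbundles, every structure map extracted from them is automatically fiberwise linear, so the only genuine content lies in the groupoid (as opposed to the $\VB$) part. First I would extract the units: $R_\eps$ is a Lagrangian subbundle of $\AA_e$, and condition ($\ca{D}$) for $\eps_\AA$ says precisely that every $u \in E_e$ satisfies $0 \sim_{R_\eps} u$, i.e.\ $E_e \subseteq R_\eps$; as both are Lagrangian in $\AA_e$ this forces $R_\eps = E_e$. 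I therefore set $\AA^{(0)} = E_e$, embedded in $\AA$ as the identity arrows over $e \in H$.

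Next the multiplication. I would define a partial product by declaring $c = a \cdot b$ whenever $(a,b) \sim_{R_\Mult} c$, for $a \in \AA_{h_1},\ b \in \AA_{h_2},\ c \in \AA_{h_1 h_2}$. A rank count makes this consistent with a groupoid $\AA \rra E_e$: a Lagrangian subbundle of $\AA \times \overline{\AA \times \AA}$ has rank $\tfrac{3}{2}\rk\AA$, which is exactly the rank of the bundle of composable pairs $\{(a,b): \sz(a)=\tz(b)\}$ once source and target have been defined. The source and target maps come from the unit identities: the right-unit axiom $\Mult_\AA \circ (\id_\AA \times \eps_\AA) = \id_\AA$ unwinds to the statement that for each $a$ there is a unique $\sz(a) \in E_e$ with $(a,\sz(a)) \sim_{R_\Mult} a$, and the left-unit axiom gives the unique $\tz(a) \in E_e$ with $(\tz(a),a) \sim_{R_\Mult} a$. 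Associativity of the product is then the relational associativity $\Mult_\AA \circ (\Mult_\AA \times \id_\AA) = \Mult_\AA \circ (\id_\AA \times \Mult_\AA)$, and inverses are induced from the group inverse on $H$ and are automatic (as in the Poisson--Lie case). This yields the $\VB$-groupoid $\AA \rra \AA^{(0)}$ with $\Mult_\AA$ as its multiplication.

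For the subbundle $E$, condition ($\ca{D}$) for $\Mult_\AA$ is exactly the comorphism property: for each $x \in E_{h_1 h_2}$ there is a unique $(a,b) \in E_{h_1} \times E_{h_2}$ with $(a,b) \sim_{R_\Mult} x$. I would use this to show that $E$ is closed under the product and contains the units, hence is a $\VB$-subgroupoid with the same unit space $E^{(0)} = E_e$; its core is automatically zero, since the source map restricts to the identity on $E_e$, so $E$ is vacant, and the source maps restrict to fiberwise isomorphisms $E_h \xrightarrow{\sim} E_e$. Part (b) proceeds identically, with the relation $R_{\mathsf{a}} \subseteq \PP \times \overline{\AA \times \PP}$ of the action morphism in place of $R_\Mult$: it defines a $\VB$-module action $a \cdot p$ of $\AA$ on $\PP$, with moment map $\PP \to E_e$ playing the role of $\sz$; the two identities in Definition \ref{def:dir}(\ref{it:aa2}) give the module associativity and unit axioms, so $\mathsf{a}_\PP$ becomes the module action, and condition ($\ca{D}$) for the action shows $L$ is preserved, making it a submodule over $E$.

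The main obstacle is the passage from the Lagrangian relation $R_\Mult$ to an honest, everywhere single-valued partial multiplication: a priori a Lagrangian relation need not be a graph, so I must show that for composable $(a,b)$ the element $c$ with $(a,b)\sim_{R_\Mult}c$ both exists and is unique, and that the composability locus is cut out precisely by $\sz(a) = \tz(b)$. The Lagrangian condition together with the rank count settles dimensions, but single-valuedness, the correct identification of the composability condition, and the clean (transversal) composition of relations needed to deduce associativity all rely on the uniqueness built into condition ($\ca{D}$); keeping this bookkeeping straight -- and reading off the source, target, and vacancy of $E$ from it -- is where the real work lies.
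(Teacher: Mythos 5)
Your overall strategy is the paper's own: read the units off $\eps_\AA$ (your observation that $R_\eps=E_e$ by the Lagrangian property is exactly the identification the paper makes), read source and target off the two unit axioms, define the partial product via $R_\Mult$, and settle composability by a rank count. However, the proposal defers or asserts precisely the steps where the content lies, and in two places the stated shortcut would fail. You flag single-valuedness of the product as ``the real work'' but never carry it out; it is not a one-line consequence of ($\ca{D}$). The paper must first prove, using the associativity axiom together with the uniqueness in ($\ca{D}$), that $(x_1,x_2)\sim_{\Mult_\AA}x$ forces $\sz(x_1)=\tz(x_2)$, $\tz(x_1)=\tz(x)$, $\sz(x_2)=\sz(x)$; then that $\sz,\tz$ restrict to fiberwise isomorphisms on $E$; and only then can it show that $(0_{h_1},0_{h_2})\sim_{\Mult_\AA}z$ implies $z=0$, by pairing $z$ against all of $E_{h_1h_2}$ (Lagrangianity of $R_\Mult$ plus ($\ca{D}$)) to get $z\in E$, and then using $\sz(z)=0$ together with injectivity of $\sz|_E$. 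None of these intermediate facts appear in your outline, and the first of them is also what justifies your claim that the composability locus is cut out by $\sz(a)=\tz(b)$, which the rank count alone does not give.

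Two assertions are wrong as stated. First, groupoid inverses are not ``automatic'': the natural candidate, the unique $y_1\in E_{h^{-1}}$ with $\tz(y_1)=\sz(x)$, only satisfies $\tz(x\circ y_1)=\tz(x)$, and $x\circ y_1$ need not equal the unit $\tz(x)$; the paper must correct it to $y=y_1+0_{h^{-1}}\circ\lambda$ with $\lambda=\tz(x)-x\circ y_1\in\ker(\tz_e)$. (The Blohmann--Weinstein fact you allude to concerns automatic existence of the group inversion of a Poisson Lie group, not fiberwise inverses in a $\VB$-groupoid $\AA\rra\g$.) Second, part (b) does not ``proceed identically''. The part (a) uniqueness argument breaks down there: if $(0_h,0_m)\sim_{\ca{A}_\PP}w$, pairing against elements related to $L_{h.m}$ only yields $w\in L$ with $\uz_\PP(w)=0$, and, unlike $\sz|_E$, the moment map $\uz_\PP|_L$ is not fiberwise injective, so one cannot conclude $w=0$. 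The paper instead proves $\l w,z\r=0$ for \emph{every} $z\in\PP_{h.m}$: writing $\uz_\PP(z)=y_1\circ y_2$ with $y_1\in E_h$, $y_2\in E_{h^{-1}}$ (which uses the fiberwise-isomorphism property of $\sz,\tz$ on $E$ from part (a)), the compatibility axiom $\ca{A}_\PP\circ(\id_\AA\times\ca{A}_\PP)=\ca{A}_\PP\circ(\Mult_\AA\times\id_\PP)$ produces $z'$ with $(y_1,z')\sim_{\ca{A}_\PP}z$, and pairing this relation with $(0_h,0_m)\sim_{\ca{A}_\PP}w$ gives $\l w,z\r=0$, whence $w=0$ by nondegeneracy of the metric. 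This genuinely different argument, which is where the action axiom enters, is missing from your proposal.
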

\begin{proof}
\eqref{it:1a} For the proof, we need to define the source and target maps, verify that $\Mult_\AA$ gives a well-defined 
multiplication of composable elements, and finally prove the existence of groupoid inverses. 
\begin{enumerate}
\item[1.] {\it Definition of source and target map.} Let $\g:=E_e$. We can think of $\g$ as the 
Lagrangian subbundle of $\AA\times 0$ along $\on{Gr}(\eps_H)\subseteq H\times\pt$
defining the Dirac morphism
$\eps_\AA$. The identity $\Mult_\AA\circ (\eps_\AA\times \id_\AA)=\id_\AA$ says that 
for any given $x\in \AA$ there exists $\xi\in \g$ with 
\[ x\sim_{(\eps_\AA\times \id_\AA)}(\xi,x)\sim_{\Mult_\AA} x.\]
This $\xi$ is unique, since $(\xi,0_h)\sim_{\Mult_\AA} 0_h$ implies $\xi=0$, by definition of Dirac morphisms.
We take it to be the definition of $\tz(x)$, so that $(\tz(x),x)\sim_{\Mult_\AA} x$. Similarly, there is a unique 
element $\sz(x)\in\g$ such that $(x,\sz(x))\sim_{\Mult_\AA} x$. 
\item[2.] We next prove that 
\[ (x_1,x_2)\sim_{\Mult_\AA} x \Rightarrow \sz(x_1)=\tz(x_2),\ \tz(x_1)=\tz(x),\ \sz(x_2)=\sz(x).\]
The relation $(x_1,x_2)\sim_{\Mult_\AA} x$ gives
\[ (x_1,\sz(x_1),x_2)\sim_{\Mult_\AA\times \on{id}} (x_1,x_2)\sim_{\Mult_\AA} x;\]
hence by associativity of $\Mult_\AA$ there exists $y$ such that 
\[ (x_1,\sz(x_1),x_2)\sim_{\on{id}\times \Mult_\AA} (x_1,y)\sim_{\Mult_\AA} x.\]
In particular, $(\sz(x_1),x_2)\sim_{\Mult_\AA} y$. Subtracting $(\tz(x_2),x_2)\sim_{\Mult_\AA} x_2$, this shows 
\[ (\sz(x_1)-\tz(x_2),0)\sim_{\Mult_\AA} y-x_2.\]
Since the left hand side is in $E\times E$, it follows that $z:=y-x_2\in E$.  But we also have $(\tz(z),z)\sim z$; hence the uniqueness 
condition in the definition of Dirac morphism shows $(\tz(z),z)=(\sz(x_1)-\tz(x_2),0)$. This proves $z=0$ and hence $\sz(x_1)=\tz(x_2)$. 
By a similar argument, $\tz(x_1)=\tz(x)$ and $\sz(x_2)=\sz(x)$. 
\item[3.] {\it The maps $\sz$ and $\tz$ restrict to fiberwise isomorphisms on $E$.}
Let $h\in H$ be given. For any $\xi\in\g$ there exist, by definition of Dirac morphism, elements $x\in E_h$ and $y\in E_{h^{-1}}$ with 
$(x,y)\sim_{\Mult_\AA} \xi$. We have $\tz(x)=\xi=\sz(y)$. This shows that both $\sz$ and $\tz$ restrict to 
fiberwise isomorphisms on $E$. It also follows that $\ker(\sz)$ and $\ker(\tz)$ 
are subbundles of $\AA$, both of which are complements to $E$ in $\AA$.
\item[4.] {\it  Definition of the groupoid multiplication $\circ$.} 
If $(0_{h_1},0_{h_2})\sim_{\Mult_\AA} z$, then $z\in E_{h_1h_2}$ 
by definition of Dirac morphism, but also $\sz(z)=\sz(0_{h_2})=0$, hence $z=0$. This shows that if $x_1,x_2\in \AA$ are such that 
$(x_1,x_2)\sim_{\Mult_\AA} x$ for some $x\in \AA$, then this $x$ is unique. 
In this case, we will call $x_1,x_2$ \emph{composable}, and write 
$x=x_1\circ x_2$.  As we saw above, a necessary condition for composability is that 
$\sz(x_1)=\tz(x_2)$. Since the subbundle consisting of elements  $(x_1,x_2)\in \AA\times \AA$ with this property has rank
$2\on{rank}(\AA)-\dim\g=3\dim\g=\on{rank}(\on{Gr}(\Mult_\AA))$, it follows that 
this condition is also sufficient. 
\item[5.] {\it  Existence of a groupoid inverse.}
Let $x\in \AA_h$ be given. Let $y_1\in E_{h^{-1}}$ be the unique element with $\sz(x)=\tz(y_1)$, 
and put $y=y_1+0_{h^{-1}}\circ \lambda$ with $\lambda=\tz(x)-x\circ y_1\in \ker(\tz_e)\subseteq \AA_e$. Then
\[ x\circ y=x\circ y_1+0_h\circ 0_{h^{-1}}\circ \lambda=x\circ y_1+\lambda=\tz(x),\]
so that $y$ is a right inverse to $x$. Similarly, one has the existence of a left inverse. By the usual argument, it is automatic that 
left and right inverses coincide. 
\end{enumerate}

\eqref{it:1b}
Arguing as in part \eqref{it:1a}, we see that for any $y\in \PP$, there exists 
a unique element $\uz(y)\in \g$ with $(\uz(y),y)\sim_{\ca{A}_\PP} y$. Furthermore, $(x,y)\sim_{\ca{A}_\PP} y'$ implies that 
$\sz(x)=\uz(y)$ and $\tz(x)=\uz(y')$.  

We claim that $(x,y)\sim_{\ca{A}_\PP} y'$ uniquely determines $y'$ in terms of $x,y$. Indeed, suppose $w\in\PP_{h.m}$ with 
$(0_h,0_m)\sim_{\ca{A}_\PP} w$. We will show $w=0$ by proving that $\l w,z\r=0$ for all  $z\in \PP_{h.m}$.
Write $\uz(z)=y_1\circ y_2$ with $y_1\in E_h$ and $y_2\in E_{h^{-1}}$. Then 
\[ (y_1,y_2,z)\sim_{\Mult_\AA\times\id_\PP} (\uz(z),z)\sim_{\mathsf{a}_\PP} z.\]
By the property $\ca{A}_\PP\circ (\on{id}_\AA\times \ca{A}_\PP)=\ca{A}_\PP\circ (\Mult_\AA\times \on{id}_\PP)$, there exists $z'\in \PP_m$ with 
\[ (y_1,y_2,z)\sim_{\id_\AA\times \mathsf{a}_\PP} (y_1,z')\sim_{\mathsf{a}_\PP} z.\]
Taking the inner product of $(y_1,z')\sim_{\mathsf{a}_\PP} z$ with  $(0_h,0_m)\sim_{\ca{A}_\PP} w$, and using \eqref{eq:-1}, we find $\l w,z\r=0$.

The bundle of elements $(x,y)\in \AA\times \PP$ with $\sz(x)=\uz(y)$ has rank 
equal to $\on{rank}(\AA)+\on{rank}(\PP)-\dim\g=\on{rank}(\PP)+\dim\g$, which is also the rank of $\on{Gr}(\mathsf{a}_\PP)$. 
Hence  $x\circ y$ is defined if and only if $\sz(x)=\uz(y)$. The rest is clear.
\end{proof}

\begin{remark}\label{rem:weak}
\begin{enumerate}
\item In \cite{lib:dir}, Dirac Lie groups were defined in terms of a $\VB$-groupoid structure on $\AA$;  Theorem \ref{th:ingroupoid} shows that the Definition \ref{def:dir} is equivalent.
In turn, the description in \cite{lib:dir} is equivalent to the  super-geometric definition of \cite{lib:qua}, where it is shown that these are indeed the groups for the super-geometric incarnation of Dirac manifolds. 
\item Definition \ref{def:dir}  is similar to that in a 2007 preprint of  Milburn \cite{mil:two}, who defines `Dirac Lie groups' as group objects for various kinds of `Dirac categories'. However, his choice of morphisms for general Courant algebroids is much more restrictive then the one used here. 
\item
On the other hand,  our definition is \emph{different} from notions of `Dirac Lie group' in the work of Ortiz \cite{ort:mu} and  Jotz \cite{jot:dir} -- see Section \ref{subsec:jotz} below.  
\end{enumerate}
\end{remark}

In an analogous fashion, one can define `group-like' objects, and their actions, in the category $\LA^\vee$ of Lie algebroids and Lie algebroid comorphisms. One finds that if $(H,E)$ is an $\LA^\vee$ Lie group, then $E$ is a vacant $\VB$-groupoid 
$E\rra E^{(0)}$
with base $H\rra \pt$, and with $E^{(0)}=E_e=:\g$ as the units.  
Compatibility with the Lie algebroid structure means that $E$ is an \emph{$\LA$-groupoid}. Given an  $\LA^\vee$-action of $(H,E)$ on a Lie algebroid $(M,L)$, the Lie algebroid $L$ becomes an $\LA$-groupoid module. See Stefanini \cite[Chapter 3]{ste:mor} for related discussions. 
\begin{example}\label{ex:la1}
Any Lie group $H$ has an  $\LA^\vee$ Lie group structure $(H,E)$, defined by  
the action Lie algebroid $E=H\times \h$ for the infinitesimal conjugation action on $H$. The $\VB$-groupoid structure 
$E\rra \h$ is that of an action groupoid for the trivial $H$-action on $\h$. More generally, given a Lie algebra automorphism $\kappa$ of $\h$, one can consider the $\kappa$-twisted conjugation action; its action Lie algebroid again defines an $\LA^\vee$ Lie group structure. 
\end{example}
Given a Lie group $H$, one has the classification results 
\[ \begin{split}
\mbox{Poisson Lie group structures on $H$} &\longleftrightarrow \mbox{$H$-equivariant Manin triples \ \ \cite{dr:qu}}\\
\mbox{Dirac Lie group structures on $H$} &\longleftrightarrow \mbox{$H$-equivariant Dirac-Manin triples \ \ \cite{lib:dir}} \\
\mbox{$\LA^\vee$ Lie group structures on $H$} & \longleftrightarrow \mbox{$H$-equivariant Lie algebra triples\ \ \cite{mac:dou}}
\end{split}
\]
We will recall below how these correspondences come about, and generalize to actions over homogeneous spaces.

\subsection{Examples} We conclude this section with some examples of Dirac actions. 
\subsubsection{Dirac actions of Poisson Lie groups}\label{subsec:dirpl}
Let $H$ be a Lie group, and $\T H$ the standard Courant algebroid with the  multiplication morphism 
$\Mult_{\T H}=\T \Mult_H$. This morphism is the multiplication for the $\VB$-groupoid structure, 
given as the direct product of the group $TH\rra \pt$ with the cotangent groupoid $T^*H\rra \h^*$. 
As remarked in \cite{lib:dir}, a Dirac structure $(\T H,\,E)$ defines a Dirac Lie group structure 
for this multiplication morphism if and only if $E$ is the graph of a Poisson Lie group structure 
$\pi_H$. Hence, all Dirac Lie groups with $\AA=\T H$ are actually Poisson Lie groups (unless one relaxes the definitions (cf. Remark \ref{rem:weak}) or allows twistings by closed 3-forms, see below). 

Given an $H$-action on $M$, one obtains a $\VB$-groupoid action of $\T H$ on $\T M$, where $\mathsf{a}_{\T M}=\T \mathsf{a}_M$ is the product of the action $\mathsf{a}_{TM}=T\mathsf{a}_M$ of the group $TH$ on $TM$ with the `dual' action $\mathsf{a}_{T^*M}$ (cf.~ Appendix \ref{app:vb}) of the groupoid $T^*H\rra \h^*$ on $T^*M$; the moment map $\uz_{T^*M}\colon T^*M\to \h^*$ is the usual moment map  from symplectic geometry.
Given a Dirac structure $(\T M,\,L)$, this action defines a Dirac Lie group action of $(H,\,\T H,\,E)$ if and only if $L$ is a submodule over $E$. If $\pi_M$ is a Poisson structure and the $H$-action on $M$ is Poisson, then 
$L=\on{Gr}(\pi_M)$ is a submodule. But there are also many non-Poisson examples: For instance, 
$L=TM$ is always a submodule. 

\subsubsection{{Cartan-Dirac structure}}\label{ex:cartandirac}
Let $H$ be a Lie group with Lie algebra $\h$. Suppose $\h$ has an $\Ad(H)$-invariant metric, and denote by $\ol{\h}$ the same Lie algebra with the opposite metric. 
The metrized Lie algebra $\dd=\h\oplus \ol{\h}$ contains the diagonal 
$\g=\h_\Delta\subseteq \h\oplus \ol{\h}$ as a Lagrangian Lie subalgebra. 
The Lie algebra $\dd$ acts on $H$ by the difference of the left-invariant and right-invariant vector fields, 
\[ \varrho(\tau',\tau)=\tau^L-{\tau'}^R.\]
This action has coisotropic (in fact, Lagrangian) stabilizers. It defines an action Dirac structure  $(\AA,E)=(H\times \dd,\,H\times \g)$
on $H$, known as the \emph{Cartan-Dirac structure}.\footnote{One can identify 
$\AA$ with the twist of the standard Courant algebroid $\T H$ by the Cartan 3-form $\eta\in \Om^3(H)$ defined by the metric (see \cite[Example 4.11]{bur:di} 
or \cite[Section 3.1]{al:pur}). This reproduces the perhaps more familiar description of the Cartan-Dirac structure.}  The Courant algebroid is a $\VB$-groupoid $\AA\rra \g$, 
given as the direct product of  the group $H$ with the pair groupoid of $\h$:
Thus $\sz(h,\tau',\tau)=\tau,\ \ \tz(h,\tau',\tau)=\tau'$, and for composable elements
\[ (h_1,\tau'_1,\tau_1)\circ (h_2,\tau_2',\tau_2)=
(h_1h_2,\tau_1',\tau_2).\]
As shown in \cite[Section 3.4]{al:pur}, the groupoid multiplication $\Mult_\AA$ is a Dirac morphism, hence 
$(H,\AA,E)$ is a Dirac Lie group. 
Note that the orbits of the Lie algebroid $E=H\times \g$ are the conjugacy classes in $H$. As explained in  \cite{bur:di, bur:cou}, this Dirac Lie group structure is responsible for the theory of quasi-Hamiltonian spaces \cite{al:mom}.
Suppose $M$ is a manifold with an action of $\dd$, with coisotropic stabilizers, and let 
$\PP=M\times \dd$ be the action Courant algebroid. Suppose that the action of the sub-Lie algebra  
$\h$ integrates to an action $\mathsf{a}_M$ of $H$. Then the groupoid action $\mathsf{a}_\PP$ of $\AA\rra \g$, given by the product of the action $\mathsf{a}_M$ with the groupoid multiplication $\Mult_\dd$,  is a Courant morphism. 
Any Lagrangian Lie subalgebra $\mathfrak{l}\subseteq  \dd$ defines a Dirac structure $L=M\times \mathfrak{l}$ in $\PP$, which is a submodule for the groupoid action of $E$. That is, we have a 
Dirac action of $(H,\AA,E)$ on $(M,\PP,L)$. 

An interesting special case is the following: Consider any coisotropic Lie subalgebra $\mf{c}\subseteq  \h\times\ol{\h}$,  and let $M=D.\cc\subseteq \on{Grass}_r(\dd)$ be its orbit under the adjoint action of  $D=H\times H$ on the Grassmannian of $r=\dim\cc$-dimensional subspaces. The infinitesimal action of $\dd$  has coisotropic stabilizers, since the stabilizer at $\mf{c}$ contains $\mf{c}$ itself. 

\subsubsection{Wonderful compactification}\label{subsubsec:wonderful}
We continue the discussion from \ref{ex:cartandirac}, but with the assumption that $H$ is a connected complex semi-simple Lie group with trivial center. In this case there is an embedding 
$H\to \on{Grass}_{\dim H}(\dd)$, taking $h\in H$ to the graph of the adjoint action, 
$\on{Gr}(\Ad_h)$. Equivalently, $H$ is embedded as the $D$-orbit of the diagonal. According to deConcini-Procesi,  the closure of $H$ inside the Grassmannian is a smooth submanifold $M$. It is called the \emph{wonderful compactification}. 
Since the infinitesimal action of $\dd$ on $H$ has co-isotropic stabilizers, the same is true for the action on its closure $M$. Thus, by the above, any choice of a Lagrangian Lie subalgebra $\mathfrak{l}$ (in particular $\mathfrak{l}=\g$)
makes $M$ into a Dirac manifold $(M,\PP,L)$, with a Dirac action of $(H,\AA,E)$. 
  
For $\h$ semisimple, there is a classification of Lagrangian Lie subalgebras of $\h\oplus \ol{\h}$, due to Karolinsky \cite{kar:cla} and Delorme \cite{del:cla}.   A detailed discussion of the variety of Lagrangian Lie subalgebras of $\dd=\h\oplus \ol{\h}$, and its relation with Poisson homogeneous spaces, 
may be found in the work of Evens and Lu \cite{ev:on,ev:on2}. 

\subsubsection{Quasi-Poisson actions}\label{subsubsec:sv}
In a  recent paper, \v{S}evera-Valach \cite{sev:lie} develop a theory of \emph{quasi-Poisson Lie groups} $H$. These are classified by $H$-equivariant \emph{Manin quadruples} $(\dd,\h,\g,\h')_\beta$, consisting of a Lie algebra $\dd$ with invariant metric $\beta$, 
and with three Lie subalgebra such that $\dd=\h\oplus \g\oplus \h'$. Here the metric restricts to a metric on $\g$ and to zero on both $\h,\h'$;  the adjoint action of $\g$ is required to preserves both $\h$ and $\h'$. An example is the usual triangular decomposition of a semisimple complex Lie algebra, and more general decompositions involving parabolics. (See \cite{sev:lie}.)  Without getting into details, we remark that the  quasi-Poisson Lie groups can be studied from  the perspective of Dirac Lie groups; the relevant Dirac Manin triple is $(\dd,\g\oplus \h',\h)_\beta$. Similarly, the q-Poisson actions can be regarded as Dirac Lie group actions.

\subsection{Comparison with the Ortiz-Jotz theory}\label{subsec:jotz}
As already remarked, our approach to Dirac Lie groups  and their actions is different from that in the work of Ortiz \cite{ort:mu} and  Jotz \cite{jot:dir}. 

In Ortiz' paper \cite{ort:mu}, a multiplicative Dirac structure on a Lie group $H$ is a Dirac structure $E\subset \T H$ inside the standard Courant algebroid, such that $E$ is a subgroupoid of $\T H\rra \h^*$, but not necessarily a wide subgroupoid: the space of units of $E$ may be strictly smaller than $\h^*$. He refers to a Lie group with multiplicative Dirac structure as a Dirac Lie group, and shows that, modulo a `regularity assumption', all of these are obtained by pull-back of a Poisson Lie group structure under a surjective group homomorphism. Jotz \cite{jot:dir} remarks that Ortiz's definition is equivalent to group multiplication of $H$ being weakly Dirac, and obtains a more precise version of the classification theorem. (Here,  by \emph{weakly Dirac} we mean
that one omits the uniqueness part in condition ($\ca{D}$) from \ref{subsec:dircat} -- 
in many references, including \cite{jot:dir}, these are referred to as Dirac maps.) In a similar fashion, Jotz studies Dirac actions on a homogeneous space $M=H/K$, equipped with Dirac structure  $L\subset \T M$, by requiring that the action map is weakly Dirac, and again she obtains a classification of such actions. 

For the more general Dirac structures $(\AA,E)$ in this paper,  in order to demand that `group multiplication is a Dirac morphism', it is necessary to specify a multiplication morphism  $\Mult_\AA$.  We work with the \emph{strong} notion of Dirac morphism in order to have an actual category. As mentioned before, if $\AA=\T H$ this only allows Poisson Lie group structures, whereas the Ortiz-Jotz definition also includes, e.g., $E=TH$.  On the other hand, the examples \ref{ex:cartandirac}, \ref{subsubsec:wonderful} and \ref{subsubsec:sv} are not included in their theory. 

One could relax our definitions in two directions. similar to Ortiz-Jotz: either (i) allow weak Dirac morphisms (keeping in mind that these cannot always be composed), or (ii)  study multiplicative Dirac structures over $H$ (given by a $\ca{CA}$-groupoid structure $\AA\rra \g$ such that the Dirac structure $E\subset \AA$ is a subgroupoid), and modules over these. These two directions are different: e.g., having $\Mult_\AA$ only weakly Dirac 
does not determine a groupoid structure on $\AA$. For both generalizations,  no general classification results are in sight.

\section{$\LA^\vee$-actions}
Recall again that forgetting the ambient Courant algebroid is a functor $\ca{DIR}\to \LA^\vee$. In particular, any Dirac Lie group structure on $H$ defines an $\LA^\vee$-Lie group structure. We will hence begin by considering $\LA^\vee$-actions.  Aside from Poisson Lie groups with the cotangent Lie algebroid 
$E=T^*_\pi H$, the main example to keep in mind is the $\LA^\vee$-Lie group structure $(H,H\times \h)$ from Example \ref{ex:la1}. 

\subsection{General properties of $\LA^\vee$ actions}\label{sec:general}
Let $(H,E)$ be an $\LA^\vee$-Lie group. The unit fiber $\g=E_e$ inherits a Lie algebra structure such that 
the inclusion $\g\subseteq E$ is a  sub-Lie algebroid along $\{e\}\subseteq H$. As we observed above, the $\VB$-groupoid $E\rra \g$ is \emph{vacant}: Its source and target maps are fiberwise isomorphisms. We will use the source map to define a trivialization: 
\[ E=H\times \g.\] 
Let us temporarily forget about the Lie algebroid structure and just consider any vacant $\VB$-groupoid 
$E\rra \g$ over $H\rra \pt$.  Let $V$ be a $\VB$-module, with base $M$ and moment map $\uz_V\colon V\to \g$. We obtain an $H$-action by bundle automorphisms of $V$, covering the 
action $\mathsf{a}_M\colon H\times M\to M$ on the base, given by 
\begin{equation}\label{eq:acc} h\bullet y:=x\circ y,\end{equation}
for $y\in V$ and $h\in H$; here $x\in E$ is the unique element in the fiber $E_h$ for which $\sz_E(x)=\uz_V(y)$. 

For a first application, recall that any groupoid acts on its space of units; for a $\VB$-groupoid this makes the bundle of units into a 
$\VB$-module.  Hence $V=\g$, as a vector bundle over $M=\pt$, is a $\VB$-module over $E\rra \g$, and we obtain an action $\bullet$ of $H$ on $\g$. By definition, this action is such that 
\begin{equation} \label{eq:ths} \tz_E(x)=h\bullet \sz_E(x)\end{equation}
for all $x\in E_h$. For a general $\VB$-module $V$ over $E\rra \g$, the moment map $\uz_V\colon V\to \g$ is a morphism of $\VB$-modules; 
hence it has the equivariance property  
\begin{equation} \label{eq:momeq}
\uz_V(h\bullet y)=h\bullet \uz_V(y).\end{equation}
(Alternatively, this follows from \eqref{eq:acc} and \eqref{eq:ths}, since $\uz_V(h\bullet y)=\tz_E(x)$ and $\uz_V(y)=\sz_E(x)$.) 

By a result of Ping Xu \cite{xu:poigr}, for any Poisson action of a Poisson Lie group $H$ 
on a Poisson manifold $M$, the (symplectic) moment map $T^*_\pi M\to \h^*=\g$ is a morphism of Lie algebroids. The following proposition is a similar result for any  
$\LA^\vee$-action of $(H,E)$ on $(M,L)$. 
%The base action of $H$ on $M$ will be denoted $\mathsf{a}_M$. 
%
\begin{proposition}\label{prop:uzl}
The moment map $\uz_L\colon L\to \g$ is a morphism of Lie algebroids: That is,
\[ \uz_L([\sigma_1,\sigma_2])=[\uz_L(\sigma_1),\uz_L(\sigma_2)]_\g+\L_{\mathsf{a}_L(\sigma_1)}\uz_L(\sigma_2)-
\L_{\mathsf{a}_L(\sigma_2)}\uz_L(\sigma_1)\]
for all sections $\sigma_1,\sigma_2\in \Gamma(L)$. 
In particular, both $\sz_E,\tz_E\colon E\to \g$ are morphisms of Lie algebroids. 
\end{proposition}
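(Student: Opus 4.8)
The plan is to observe that the asserted identity is precisely the statement that the bundle map $(\mathsf{a}_L,\uz_L)\colon L\to TM\times\g$ is a morphism of Lie algebroids into the product Lie algebroid, in the sense of \eqref{eq:bracketpreserving}. Since the anchor $\mathsf{a}_L\colon L\to TM$ is automatically a Lie algebroid morphism, all the content sits in the $\g$-component; and because the right-hand side is skew-symmetric in $\sigma_1,\sigma_2$ it would suffice to check it on a spanning set of sections. The structural input I would use is that $L$ is an $\LA$-groupoid module over the $\LA$-groupoid $E\rra\g$: the module action map $a\colon E\times_\g L\to L,\ (x,y)\mapsto x\circ y$ (with $E\times_\g L$ the fibered product over $\sz_E$ and $\uz_L$, which is a Lie algebroid over $H\times M$ because $\sz_E$ is a fiberwise isomorphism), together with the projection $\pr_E\colon E\times_\g L\to E$, are morphisms of Lie algebroids.

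The main argument would combine two relations. First, unwinding \eqref{eq:acc}, \eqref{eq:ths} and \eqref{eq:momeq} gives the pointwise identity $\uz_L(x\circ y)=\tz_E(x)$ whenever $x\circ y$ is defined, i.e. $\uz_L\circ a=\tz_E\circ\pr_E$ as maps $E\times_\g L\to\g$. Second, since $E$ is vacant and $\sz_E$ is a fiberwise isomorphism, the restriction of $a$ over a fixed $(h,m)$ is the isomorphism $y\mapsto h\bullet y$ from $L_m$ onto $L_{h\cdot m}$; hence $a$ is a Lie algebroid fibration, and every section of $L$ lifts to an $a$-related section of $E\times_\g L$. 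I would therefore lift $\sigma_1,\sigma_2\in\Gamma(L)$ to $a$-related sections $\Sigma_1,\Sigma_2$ of the fibered product, use the morphism property of $a$ to conclude $[\Sigma_1,\Sigma_2]$ is $a$-related to $[\sigma_1,\sigma_2]$, and then transport the bracket across $\uz_L\circ a=\tz_E\circ\pr_E$, with $\pr_E$ a morphism converting $\pr_E[\Sigma_1,\Sigma_2]$ into $[\pr_E\Sigma_1,\pr_E\Sigma_2]_E$. The two anchor terms $\L_{\mathsf{a}_L(\sigma_1)}\uz_L(\sigma_2)-\L_{\mathsf{a}_L(\sigma_2)}\uz_L(\sigma_1)$ should come out of the $\L_{\mathsf{a}}$-contributions in the bracket on $E\times_\g L$, while the term $[\uz_L(\sigma_1),\uz_L(\sigma_2)]_\g$ should be produced by the restriction of the $E$-bracket to the unit fiber $\g=E_e$, which is by construction the fiber Lie algebra bracket.

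The step I expect to be the main obstacle is isolating precisely the $\g$-bracket term $[\uz_L(\sigma_1),\uz_L(\sigma_2)]_\g$ without circularity: the identity $\uz_L\circ a=\tz_E\circ\pr_E$ involves $\tz_E$, and I must not assume at this stage that $\tz_E$ is a Lie algebroid morphism, since that is meant to be a consequence. The way around this is to carry out the computation along the unit section of $E\rra\g$, where the bracket-compatibility of the $\LA$-groupoid multiplication directly encodes the fiber Lie algebra bracket on $\g=E_e$; thus the $\g$-bracket term is obtained from the \emph{definition} of the bracket on $\g$ rather than from $\tz_E$. Once the general identity is established, the final assertion follows by specialization: applying it to $E$ regarded as a module over itself by left multiplication, whose moment map is exactly $\tz_E$, shows $\tz_E$ is a Lie algebroid morphism, and the analogous right-multiplication module (equivalently, composing with the groupoid inverse constructed in Theorem \ref{th:ingroupoid}) yields the same for $\sz_E$.
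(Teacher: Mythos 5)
Your final route---the unit-section computation---is in substance the paper's own proof. The paper restricts the comorphism $\mathsf{a}_L\colon E\times L\da L$ to the identity fiber (legitimate because $\mathsf{a}_E$ vanishes on $\g=E_e$, so $\g\times L$ is a Lie subalgebroid of $E\times L$ along $\{e\}\times M$), observes that under the resulting comorphism $\g\times L\da L$ over $\id_M$ one has $(\uz_L(\sigma),\sigma)\sim\sigma$, takes the bracket of the two relations in the product Lie algebroid $\g\times L$ (which produces exactly $[\uz_L(\sigma_1),\uz_L(\sigma_2)]_\g$ plus the two Lie-derivative terms, paired with $[\sigma_1,\sigma_2]$), and concludes by the uniqueness clause in the definition of an $\LA^\vee$-morphism. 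Your lift $\Sigma_i$ is precisely the comorphism pullback $\mathsf{a}_L^*\sigma_i$, its restriction to $\{e\}\times M$ is $(\uz_L(\sigma_i),\sigma_i)$, and your fibered-product constraint at the unit fiber is the paper's uniqueness step in disguise. The $\LA$-groupoid-module input you invoke ($E\times_\g L$ a Lie algebroid, with $a$ and $\pr_E$ morphisms) is equivalent data to the comorphism, which the paper uses directly and more economically; note also that ``$\sz_E$ is a fiberwise isomorphism'' only gives the smooth subbundle $E\times_\g L$, while closure of its sections under the bracket \emph{is} the module property. Your handling of the addendum on $\sz_E,\tz_E$ (the left and right multiplication modules, with moment maps $\tz_E$ and $\sz_E$) likewise matches the paper, which introduces exactly these two $\LA^\vee$-actions $\mathsf{a}^L_E,\mathsf{a}^R_E$.

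One caution about the global route that you (rightly) abandon: its defect is not only the circularity you identify. The step converting $\pr_E[\Sigma_1,\Sigma_2]$ into $[\pr_E\Sigma_1,\pr_E\Sigma_2]_E$ is not meaningful, because $\pr_E\circ\Sigma_i$ covers $\pr_H\colon H\times M\to H$ but is not a section of $E$, nor is $\Sigma_i$ $\pr_E$-related to any section of $E$ (its $E$-component genuinely depends on the $M$-variable); a Lie algebroid morphism only relates brackets of \emph{related} sections. So that computation breaks down even before the question of whether $\tz_E$ preserves brackets arises. Since your repaired argument along the unit section never uses it, the proposal as a whole stands.
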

\begin{proof} 
The Lie algebroid comorphism $\mathsf{a}_L\colon E\times L\da L$ restricts to a Lie algebroid comorphism 
$\g\times L\da L$, with base map the identity map of $M$. Under this comorphism, $(\uz_L(\sigma),\sigma)\sim\sigma$ for all 
sections $\sigma$. Given two sections $\sigma_i\in\Gamma(L)$, by taking Lie brackets of 
the relations 
$(\uz_L(\sigma_i),\sigma_i)\sim\sigma_i,\ i=1,2$, we obtain 
\[  \Big([\uz_L(\sigma_1),\uz_L(\sigma_2)]_\g+\L_{\mathsf{a}_L(\sigma_1)}\uz_L(\sigma_2)-
\L_{\mathsf{a}_L(\sigma_2)}\uz_L(\sigma_1),\,[\sigma_1,\sigma_2]\Big)
\sim
[\sigma_1,\sigma_2].\]
By the uniqueness part in the definition of $\LA^\vee$-morphism, the first entry on the 
left hand side must be $\uz_L([\sigma_1,\sigma_2])$. 
\end{proof}
In particular, we see that the space of sections with $\uz_L(\sigma)=0$   is a Lie subalgebra of $\Gamma(L)$, and those annihilated by both $\uz_L$ and $\mathsf{a}_L$ form a Lie ideal. 
\begin{proposition}\label{prop:l-equivariant}
The space $\Gamma(L)^H$ of $\bullet$-invariant sections is closed under the 
Lie algebroid bracket. Furthermore, the  
difference $\mathsf{a}_L(h\bullet y)-h.\mathsf{a}_L(y)$ is tangent to $H$-orbits, for all $h\in H$ and $y\in L$. Hence, if the $H$-action on $M$ is a principal action, then $L/H$ is a Lie algebroid over 
$M/H$. 
\end{proposition}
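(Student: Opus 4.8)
The plan is to identify the $\bullet$-action on sections of $L$ with the action comorphism $\mathsf{a}_L\colon E\times L\da L$ of Section~\ref{sec:general}, and then read off all three assertions from the comorphism axioms together with the product Lie algebroid structure on $E\times L$. Being a comorphism along $\mathsf{a}_M\colon H\times M\to M$, $\mathsf{a}_L$ is given by a bundle map $\mathsf{a}_M^*L\to E\times L$; writing $\sigma^!$ for the image of $\mathsf{a}_M^*\sigma$, one has that $\sigma^!$ is the unique section of $E\times L$ related to $\sigma$. Unwinding the groupoid description $(x,y)\sim x\circ y$, the value $\sigma^!(h,m)\in E_h\oplus L_m$ has $E$-component the unique $x\in E_h$ with $\tz_E(x)=\uz_L(\sigma(h\cdot m))$ and $L$-component $x^{-1}\circ\sigma(h\cdot m)=h^{-1}\bullet\sigma(h\cdot m)$. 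Consequently, a section $\sigma$ is $\bullet$-invariant if and only if the $L$-component of $\sigma^!$ equals the pullback $\pr_M^*\sigma$; one direction is immediate from the displayed formula, and the converse holds because $\sigma^!$ is the unique section related to $\sigma$. I would take this as the working criterion for invariance.

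For the closure statement, suppose $\sigma_1,\sigma_2\in\Gamma(L)^H$, so that $\sigma_i^!=\xi_i\oplus\pr_M^*\sigma_i$ with $\xi_i\in\Gamma(\pr_H^*E)$. Since the comorphism preserves brackets, $[\sigma_1^!,\sigma_2^!]=[\sigma_1,\sigma_2]^!$, and by the criterion above it suffices to show that the $L$-component of the left-hand side is $\pr_M^*[\sigma_1,\sigma_2]$. Here the product Lie algebroid structure enters: expanding
\[ [\xi_1\oplus\pr_M^*\sigma_1,\ \xi_2\oplus\pr_M^*\sigma_2]=[\xi_1,\xi_2]+[\xi_1,\pr_M^*\sigma_2]+[\pr_M^*\sigma_1,\xi_2]+[\pr_M^*\sigma_1,\pr_M^*\sigma_2], \]
one checks that only the last term carries an $L$-component. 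Writing $\xi_1$ locally as $\sum_j f_j\,\pr_H^*e_j$ with $e_j\in\Gamma(E)$ and $f_j\in C^\infty(H\times M)$, the Leibniz rule gives $[\xi_1,\pr_M^*\sigma_2]=-\sum_j(\L_{\mathsf{a}_L(\sigma_2)}f_j)\,\pr_H^*e_j$, using $[\pr_H^*e_j,\pr_M^*\sigma_2]=0$ in a product Lie algebroid; this lies in $\Gamma(\pr_H^*E)$ and so has no $L$-component, and the same holds for the other cross term and for $[\xi_1,\xi_2]$. The surviving term $[\pr_M^*\sigma_1,\pr_M^*\sigma_2]=\pr_M^*[\sigma_1,\sigma_2]$ then proves $[\sigma_1,\sigma_2]$ is invariant.

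The tangency statement follows from the anchor compatibility of the same comorphism, which reads $\mathsf{a}_L(x\circ y)=T\mathsf{a}_M\big(\mathsf{a}_E(x),\mathsf{a}_L(y)\big)$ whenever $x\circ y$ is defined. Taking $x\in E_h$ with $x\circ y=h\bullet y$ and decomposing the differential of the action as $T\mathsf{a}_M(v,w)=h.w+(\text{velocity at }t=0\text{ of }t\mapsto h_t\cdot m)$ with $\dot h_0=v$, the second summand is tangent to the $H$-orbit through $h\cdot m$; hence $\mathsf{a}_L(h\bullet y)-h.\mathsf{a}_L(y)$ is orbit-tangent, as claimed.

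For the quotient statement, assume the $H$-action is principal, so that $p\colon M\to M/H$ is a principal bundle, $L/H\to M/H$ is a vector bundle, and $\Gamma(M/H,L/H)\cong\Gamma(L)^H$. By the closure statement the bracket of $L$ restricts to $\Gamma(L)^H$, giving a bracket on $\Gamma(L/H)$; by the tangency statement the composite $Tp\circ\mathsf{a}_L$ descends to a well-defined bundle map $\ol{\mathsf{a}}\colon L/H\to T(M/H)$, since the orbit-tangent ambiguity dies under $Tp$ and $Tp$ is $H$-invariant. The Lie algebroid axioms for $(L/H,\ol{\mathsf{a}})$ then follow by transporting the Jacobi identity, the Leibniz rule, and the anchor–bracket compatibility from $L$ through these identifications (using $p^*C^\infty(M/H)\subseteq C^\infty(M)^H$). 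I expect the main work to lie in the closure statement—specifically in verifying that the off-diagonal brackets in $E\times L$ carry no $L$-component—whereas the tangency and quotient statements are essentially formal once anchor compatibility and the quotient identifications are in place.
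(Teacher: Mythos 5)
Your proposal is correct and takes essentially the same route as the paper: the invariance criterion (a section $\sigma$ is $\bullet$-invariant iff the $L$-component of its comorphism pullback equals $\pr_M^*\sigma$), the bracket-preservation of $\mathsf{a}_L^*$, and the anchor-compatibility argument for orbit-tangency are exactly the paper's steps, and your quotient discussion just spells out what the paper dismisses as clear. The only cosmetic difference is that where the paper invokes the fact that the projection $S\colon E\times L\to L$ is a Lie algebroid morphism (so brackets of $S$-related sections are $S$-related), you verify this by hand through the local-frame and Leibniz-rule computation showing the cross terms carry no $L$-component.
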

\begin{proof}
Since $\mathsf{a}_L$ is a Lie algebroid comorphism, the pull-back map $\mathsf{a}_L^*\colon \Gamma(L)\to \Gamma(E\times L)$ 
preserves Lie brackets. Let $S\colon E\times L\to L$ be the Lie algebroid morphism given by projection to the second factor. Then $\sigma$ is invariant if and only if $\mathsf{a}_L^*\sigma\sim_S\sigma$. Given two invariant sections $\sigma_1,\sigma_2$, we conclude 
\[ \mathsf{a}_L^*[\sigma_1,\sigma_2]=[\mathsf{a}_L^*\sigma_1,\mathsf{a}_L^*\sigma_2]\sim_S [\sigma_1,\sigma_2],\]
hence $[\sigma_1,\sigma_2]$ is invariant. 

If $x\in E_h$ and $y\in L_m$ with $\sz_E(x)=\uz_L(y)$, then $\mathsf{a}_L(h\bullet y)=\mathsf{a}_L(x\circ y)=\mathsf{a}_E(x)\circ \mathsf{a}_L(y)$, with the action of $TH$ on $TM$. But for any $v\in T_hH,\ w\in T_mM$ the difference
$v\circ w-h.w$ is tangent to orbits. 
The rest is clear. 
\end{proof} 

\subsection{The Lu-Lie algebroid}
 In her 1997 paper \cite{lu:poi}, Jiang-Hua Lu proved that for any Poisson action of a Poisson Lie group $H$ on a Poisson manifold $(M,\pi_M)$, the action Lie algebroid $M\times \h$ and the cotangent Lie algebroid 
$T^*_\pi M\cong \on{Gr}(\pi_M)$ form a \emph{matched pair}, in the sense of Mokri \cite{mok:mat}. 
Equivalently, their direct sum 
\begin{equation} \label{eq:lulie}
 \wh{T^*_\pi M}:=(M\times\h)\oplus T^*_\pi M\end{equation}
has a Lie algebroid structure, in such a way that 
the two summands are Lie subalgebroids. Lu proved furthermore that the action of $H$ on $M$ lifts to an 
action by Lie algebroid automorphisms on \eqref{eq:lulie}, and that the direct sum of the projection $M\times \h\to \h$ and  the moment map 
$T^*_\pi M\to \h^*\cong \g$ defines a Lie algebroid morphism to $\dd=\h\oplus \g$, the Drinfeld double of $\h$. 
Further aspects of this Lie algebroid, relating it to a construction of Bursztyn and Crainic 
\cite{bur:di} for quasi-Poisson actions, are discussed in the article \cite{kos:poi} of Kosmann-Schwarzbach.
We have the following result for general $\LA^\vee$-actions
of $(H,E)$ on Lie algebroids $(M,L)$. 

\begin{theorem}\label{th:lulie}
The direct sum 
\begin{equation}\label{eq:leqn}
 \wh{L}=(M\times \h)\oplus L\end{equation}
 has the structure of an $H$-equivariant Lie algebroid, 
in such a way that both summands are sub-Lie algebroids, and such that the
map $\h\to \Gamma(\wh{L})$ given by the constant sections of $M\times \h$ gives generators for the $H$-action. 
\end{theorem}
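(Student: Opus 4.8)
The plan is to realize $\wh{L}$ as a reduction of a product Lie algebroid, following the scheme announced in the introduction. Over $H\times M$ I consider the product Lie algebroid $TH\times L$, using the left trivialization $TH=H\times\h$. On its total space I put an $H$-action covering the principal action $a\cdot(h,m)=(ah,a\cdot m)$ on $H\times M$, whose $M$-component is the given action; the lift is built from the tangent lift of left translation on the $TH$-factor together with the $\VB$-groupoid module action $\bullet$ on $L$, calibrated so that it covers the base action. The projection $(h,m)\mapsto h^{-1}\cdot m$ exhibits $H\times M\to M$ as a principal $H$-bundle, and evaluating $H$-invariant sections along the slice $\{e\}\times M$ identifies the quotient vector bundle $(TH\times L)/H$ with $(M\times\h)\oplus L$: the $\h$-summand comes from $TH/H\cong\h$, and the $L$-summand from the value of the $L$-component at $h=e$. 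This fixes the underlying bundle of $\wh{L}$.

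The heart of the argument is to show that, although this $H$-action is \emph{not} by Lie algebroid automorphisms (the factor $\bullet$ does not preserve the bracket of $L$ in general), it nevertheless preserves the bracket of $H$-invariant sections, so that the product bracket descends to $\Gamma\big((TH\times L)/H\big)=\Gamma(TH\times L)^H$. I expect this descent to be the main obstacle, and the precise calibration of the action on the $L$-factor to be the delicate point. The mechanism I would use is exactly that of Proposition \ref{prop:l-equivariant}: invariance of a section is encoded as a relation under the comorphism $\mathsf{a}_L\colon E\times L\da L$ together with the projection morphism $E\times L\to L$; since a comorphism preserves brackets while the projection is an honest morphism, the bracket of two invariant sections is again invariant. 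The same circle of ideas shows that the anchor descends, for by Proposition \ref{prop:l-equivariant} the failure of $\mathsf{a}_L$ to intertwine the $\bullet$-action with the $H$-action on $TM$ is tangent to the $H$-orbits, which are precisely the directions annihilated in passing to $M=(H\times M)/H$.

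Once $\wh{L}=(TH\times L)/H$ is known to be a Lie algebroid, the remaining assertions are identifications on generators. The subbundle $M\times\h$ is the image of the invariant sections supported on the $TH$-factor; since $TL_a$ is a genuine automorphism of $TH$ and the product bracket of two $TH$-valued sections stays $TH$-valued, these form a Lie subalgebra, so $M\times\h$ is a sub-Lie algebroid, indeed the action Lie algebroid of the infinitesimal $H$-action on $M$. Likewise the invariant sections supported on $L$ are closed under the bracket by Proposition \ref{prop:l-equivariant} and restrict at $h=e$ to the original bracket of $L$, so $L$ is a sub-Lie algebroid. Finally, constant sections of $M\times\h$ correspond to left-invariant vector fields $\nu^L$ on $H$, whose flow is right translation; this right-translation action commutes with the reduction, is by honest automorphisms of $TH\times L$, and descends to the $H$-action on $\wh{L}$ by Lie algebroid automorphisms. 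Its infinitesimal generator is $\Cour{\nu,\cdot}=\L_{\wh{L}}(\nu)$ applied to the constant section $\nu$, which is exactly the asserted generator property and simultaneously yields the $H$-equivariance.
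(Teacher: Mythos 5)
Your architecture is the same as the paper's: realize $\wh{L}$ as a quotient of the product Lie algebroid $TH\times L$ over $H\times M$ by a lifted $H$-action, use Proposition \ref{prop:l-equivariant} to descend the bracket, identify the two summands, and get the generator statement from the commuting action $T\mathsf{a}^R\times\id_L$. However, there is a genuine gap in the one step you yourself flag as delicate: the definition of the lifted action. You take the \emph{product} of the tangent lift of left translation on $TH$ with the module action $\bullet$ on $L$. The paper instead first extends the $\LA^\vee$-action of $(H,E)$ on $(M,L)$ to an $\LA^\vee$-action on $(H\times M,\,TH\times L)$, with moment map $\uz_{TH\times L}(v,y)=\uz_L(y)$ and groupoid action $x\circ(v,y)=(\mathsf{a}_E(x)\circ v,\ x\circ y)$, where $\mathsf{a}_E(x)\circ v$ is the \emph{group product in} $TH$; the resulting $\bullet$-action is $h\bullet(v,y)=(\mathsf{a}_E(x)\circ v,\ h\bullet y)$ with $x\in E_h$ the unique element satisfying $\sz_E(x)=\uz_L(y)$. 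Since $\mathsf{a}_E(x)$ is a dressing vector at $h$, not $0_h$, this differs from $TL_h(v)$ by a right-translate of the dressing vector field determined by $\uz_L(y)$; in particular the $TH$-component of the action depends on the $L$-component $y$, so the correct action is \emph{not} a product action. Proposition \ref{prop:l-equivariant} applies only to the $\bullet$-action of an honest $\LA^\vee$-action on the Lie algebroid being reduced, so to invoke it you must exhibit your action as such and verify the comorphism property (the paper does this by writing $\on{Gr}(\mathsf{a}_{TH\times L})$ as an intersection of Lie subalgebroids); your product action is not of this form, and the proposition does not apply to it.

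The miscalibration is fatal rather than cosmetic. Test it on $(M,L)=(\pt,\g)$, where the construction must produce the double $\dd=\wh{\g}$ of \eqref{eq:dd}. Under your product action, the invariant sections of $TH\times\g$ are $h\mapsto(\tau^L_h,\ \pr_\g\Ad_h\xi)$ with $\tau\in\h$, $\xi\in\g$. For two such sections with $\tau_1=\tau_2=0$, the product bracket has $\g$-component $[\pr_\g\Ad_h\xi_1,\,\pr_\g\Ad_h\xi_2]_\g$, whereas invariance would require $\pr_\g\Ad_h[\xi_1,\xi_2]_\g$; the difference is $\pr_\g[\pr_\h\Ad_h\xi_1,\pr_\g\Ad_h\xi_2]+\pr_\g[\pr_\g\Ad_h\xi_1,\pr_\h\Ad_h\xi_2]$, which is nonzero for any genuine double (e.g.\ the Manin triple of $\SU(2)$), because $\g$ is not $\Ad_H$-invariant. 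So invariant sections are not closed under the bracket and nothing descends; the missing terms are exactly those restored by the dressing twist $\mathsf{a}_E(x)$, via the Lie derivatives $\L_{\varrho(\xi_1)}\pr_\g\Ad_h\xi_2$ appearing in the bracket. Relatedly, your claim that the invariant sections corresponding to $\Gamma(L)$ are ``supported on $L$'' is false for the correct action: the invariant extension $\breve{\sigma}$ of $\sigma\in\Gamma(L)$ has $TH$-component $\mathsf{a}_E(x)\neq 0$, which is precisely why the paper needs a separate argument (writing $\sigma\mapsto\breve{\sigma}$ as the composition of the bracket-preserving maps $(\on{Inv}_E\times\id)\circ\mathsf{a}_L^*$ and $\mathsf{a}_E\times\id$) to show that $L$ sits inside $\wh{L}$ as a subalgebroid. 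The remaining parts of your outline (the bundle identification, $M\times\h$ as the action algebroid, and the generator statement via $(\tau^L,0)$ and $T\mathsf{a}^R$) are sound once the action is corrected.
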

\begin{proof}
The $\LA^\vee$-action of $(H,E)$ on $(M,L)$ extends to an action on $(H\times M,\,
TH\times L)$, by putting $\uz_{TH\times L}(v,y)=\uz_L(y)$, and 
for composable elements 
\[ x\circ (v,y)=(\mathsf{a}_E(x)\circ v,\,x\circ y),\]
where $\mathsf{a}_E(x)\circ v$ is a product in the group $TH$. To see 
its compatibility with Lie brackets, note that  $\on{Gr}(\mathsf{a}_{TH\times L})$ can be regarded as the intersection of $\on{Gr}(\Mult_{TH})\times \on{Gr}(\mathsf{a}_L)$ 
with the set of all $((v',w,v),\ (y',x,y))$ such that $w=\mathsf{a}(x)$. Being the intersection of two 
Lie subalgebroids, it is itself a Lie subalgebroid. 

The action of $E\rra \g$ on $TH\times L$ gives rise to a $\bullet$-action of $H$, which commutes with the action $T\mathsf{a}^R(h)\times \id_L$. Hence, Proposition \ref{prop:l-equivariant}
shows that the quotient by the $\bullet$--action is a Lie algebroid $\wh{L}=(TH\times L)/H$ with an action of $H$ by automorphisms. As a vector bundle, 
\[ \wh{L}\cong (TH\times L)\big|_{\{e\}\times M}=(M\times \h)\oplus L.\] 
The first summand is the quotient of the Lie subalgebroid $TH\times M\subseteq TH\times L$ under the $\bullet$-action, hence it is itself a Lie subalgebroid. Since  left-trivialization identifies $TH\times M$ with the action Lie algebroid for $\mathsf{a}^R$, it follows that $M\times \h$ 
is the action  Lie algebroid $M\times \h$ for $\mathsf{a}_M$.  

It remains to show that $L$ is a Lie subalgebroid of $\wh{L}$. 
For $\sigma\in \Gamma(L)$, let $\breve{\sigma}\in 
\Gamma(TH\times L)^H$ be the corresponding invariant section, so that 
$\breve{\sigma}|_{\{e\}\times M}=\sigma$. We need to show that the  map $\sigma\mapsto \breve{\sigma}$ preserves brackets. This map can be written as a composition
\[ \Gamma(L)\xra{(\on{Inv}_E\times \id)\circ \mathsf{a}_L^*}     \Gamma(E\times L)^H 
 \xra{\mathsf{a}_E\times \id} \Gamma(TH\times L)^H;\]
here $\mathsf{a}_L^*\colon \Gamma(L)\to \Gamma(E\times L)$ is the pull-back map under the 
Lie algebroid comorphism  $\mathsf{a}_L\colon E\times L\da L$,  and $\on{Inv}_E\colon E\to E$ is the inversion. 
(In detail: If $\sig_m=y$, then $(\on{Inv}_E\times \id)((\mathsf{a}_L^*\sig)_{h,m})=(x,\,x\circ y)$, with the unique $x\in E_h$ such that $\sz(x)=\uz(y)$. Hence $\breve{\sig}_{h,m}=(\mathsf{a}_E(x),\,x\circ y)$. If $h=e$, then $x\in\g=E_h$, hence $\mathsf{a}_E(x)=0_e$. That is, $\breve{\sig}_{e,m}=\sigma_m$.) 
Since all of these maps preserve brackets, 
it follows that the map $\sigma\mapsto \breve{\sigma}$ is a Lie algebra morphism, as required. 
\end{proof}
Clearly, the construction from Theorem \ref{th:lulie} is functorial: Given $\LA^\vee$-actions of $(H,E)$ on $(M_1,L_1),\ (M_2,L_2)$, and a Lie algebroid morphism $L_1\to L_2$
intertwining the actions, the resulting map $\wh{L}_1\to \wh{L}_2$
is an $H$-equivariant Lie algebroid morphism. 

As a first example, we can apply Theorem \ref{th:lulie} to the action of $(H,E)$ on $(\pt,\g)$. We find that 
\begin{equation}\label{eq:dd}
\wh{\g}=\h\oplus \g
\end{equation} 
is a Lie algebra, with $H$ acting by automorphisms, and with $\h$ and $\g$ as Lie subalgebras. Writing $\dd:=\wh{\g}$, 
this is the $H$-equivariant Lie algebra triple $(\dd,\g,\h)$ associated to the $\LA^\vee$-Lie group $(H,E)$
by Mackenzie's classification \cite{mac:dou}.
\begin{proposition}\label{prop:fl}
The map
\[ f_{\wh{L}}\colon \wh{L}=\h\times L\to \dd,\ \ (\tau,y)\mapsto \tau+\uz_L(y)\]
is an $H$-equivariant morphism of Lie algebroids.
\end{proposition}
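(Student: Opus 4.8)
The plan is to deduce the proposition from the functoriality of the construction in Theorem \ref{th:lulie}, applied to the moment map $\uz_L\colon L\to\g$. First I would check that $\uz_L$ is a morphism \emph{intertwining} the relevant $\LA^\vee$-actions: the action of $(H,E)$ on $(M,L)$ and its tautological action on the space of units $(\pt,\g)$ (the latter being the $\LA^\vee$-action used to build $\dd=\wh{\g}$ in \eqref{eq:dd}). By Proposition \ref{prop:uzl}, $\uz_L$ is a morphism of Lie algebroids from $L$ over $M$ to $\g$ over $\pt$. By the general properties recalled in Section \ref{sec:general}, it is also a morphism of $\VB$-modules over $E$: this is exactly the equivariance \eqref{eq:momeq}, which unwinds, via \eqref{eq:acc} and \eqref{eq:ths}, to $\uz_L(x\circ y)=\tz_E(x)$ whenever $\sz_E(x)=\uz_L(y)$. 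Having both properties is precisely what is meant by a Lie algebroid morphism intertwining the $\LA^\vee$-actions.

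Second, I would invoke the functoriality statement following Theorem \ref{th:lulie}. The construction realizes $\wh{L}=(TH\times L)/H$ as a quotient of $TH\times L$ by the $\bullet$-action, and the product map $\id_{TH}\times\uz_L\colon TH\times L\to TH\times\g$ is a morphism of (product) Lie algebroids; it is $H$-equivariant for the $\bullet$-actions precisely because $\uz_L$ is a $\VB$-module morphism, and it commutes with the residual action $T\mathsf{a}^R(h)\times\id$. Hence it descends to an $H$-equivariant morphism of Lie algebroids $\wh{\uz_L}\colon \wh{L}\to\wh{\g}=\dd$.

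Third, I would identify $\wh{\uz_L}$ with $f_{\wh{L}}$ by tracing the vector bundle identifications. Under $\wh{L}\cong(TH\times L)|_{\{e\}\times M}=(M\times\h)\oplus L$ and $\dd=\wh{\g}\cong(TH\times\g)|_{\{e\}}=\h\oplus\g$, the fiberwise map $\id_{TH}\times\uz_L$ sends $(\tau,y)\in T_eH\times L_m=\h\times L_m$ to $(\tau,\uz_L(y))\in\h\oplus\g$, that is, to $\tau+\uz_L(y)$; equivalently, the summand $\h$ (resp.\ $M\times\h$) arises identically in both constructions from the generators of the $H$-action, so $\wh{\uz_L}$ is the identity there and is $\uz_L$ on $L$. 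Thus $\wh{\uz_L}=f_{\wh{L}}$, and the proposition follows.

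The main obstacle is the first step: confirming that $\uz_L$ genuinely intertwines the actions in the sense the functoriality statement requires, i.e.\ that the Lie-algebroid-morphism property of Proposition \ref{prop:uzl} and the $\VB$-module-morphism property of the moment map hold simultaneously and combine to the correct notion of morphism of $\LA^\vee$-actions; the remaining steps are a bookkeeping check of the bundle identifications. As a fallback one could argue directly, verifying the bracket-compatibility identity $f_{\wh{L}}([\xi_1,\xi_2])=[f_{\wh{L}}(\xi_1),f_{\wh{L}}(\xi_2)]+\L_{\mathsf{a}(\xi_1)}f_{\wh{L}}(\xi_2)-\L_{\mathsf{a}(\xi_2)}f_{\wh{L}}(\xi_1)$ on the three types of section pairs (two generators from $\h$, a generator with a section of $L$, and two sections of $L$), using that $M\times\h$ is the action Lie algebroid and that $\h$ gives generators; but this route forces one to unwind the matched-pair bracket of $\dd=\h\oplus\g$ and is markedly less transparent.
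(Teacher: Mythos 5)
Your proposal is correct and follows essentially the same route as the paper: the paper's proof likewise cites Proposition \ref{prop:uzl} together with the equivariance of $\uz_L$ (its $\VB$-module morphism property) and then invokes the functoriality of the construction in Theorem \ref{th:lulie}. Your additional bookkeeping identifying the induced map $\wh{\uz_L}$ with $f_{\wh{L}}$ under the identifications $\wh{L}\cong(M\times\h)\oplus L$ and $\dd\cong\h\oplus\g$ is a correct elaboration of what the paper leaves implicit.
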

\begin{proof}
The moment map $\uz_L\colon L\to \g$ is a morphism of Lie algebroids, and is 
equivariant for the action of $(H,E)$.  Hence the result follows by functoriality. 
\end{proof}
We will denote by $\pr_\h,\,\pr_\g$ the projections from $\dd=\h\oplus \g$ to the two summands. 
In terms of $f_{\wh{L}}$, the $H$-action on  $\wh{L}=\h\times L$ is explicitly given as
\[ h.(\tau,y)=\Big(\pr_\h \big(\Ad_h f_{\wh{L}}(\tau,y)\big),\ h\bullet y\Big).\]

%We will also need the following  fact. Let $\pr_L\colon \wh{L}\to L$ denote the projection along $M\times \h$. Recall that $\Gamma(L)^H$ is a sub-Lie algebra of $\Gamma(L)$.
%\begin{lemma}\label{lem:project}The map $\pr_L\colon \Gamma(\wh{L})^H\to \Gamma(L)^H$ is a Lie algebra homomorphism.\end{lemma}\begin{proof}Let $\sigma_1,\sigma_2\in \Gamma(L)^H$ and $\sigma_1',\sigma_2'\in \Gamma(\wh{L})^H$, with $\pr_L\sig_i'=\sig_i$. Note that 
%\[ [\Gamma(M\times \h),\Gamma(\wh{L})^H]\subseteq \Gamma(M\times \h),\]
%since the constant sections of $M\times \h$ are generators for the $H$-action on $\Gamma(\wh{L})$. Hence, with $\phi_i=\sig_i'-\sig_i$, \[ [\sig_1,\sig_2]=[\sig_1'-\phi_1,\sig_2'-\phi_2]=[\sig_1',\sig_2']-[\phi_1,\sig_2']+[\phi_2,\sig_1']+[\phi_1,\phi_2].\]The last three terms are in $\Gamma(M\times \h)$, proving that  $\pr_L [\sig_1',\sig_2']=[\sig_1,\sig_2]$. \end{proof}

\subsection{The action of $\dd$ on $H$}
There are two commuting $\LA^\vee$-actions of $(H,E)$ on itself, with base actions the
actions $\mathsf{a}^L,\mathsf{a}^R$ of $H$ on itself: 
\[ (x,y)\sim_{\mathsf{a}^L_E} x\circ y,\ \ \ (x,y)\sim_{\mathsf{a}^R_E} y\circ x^{-1}\] 
Apply Theorem \ref{th:lulie} to the action  $\mathsf{a}^R_E$. We obtain an $H$-equivariant Lie algebroid
\[ \wh{E}=(H\times \h)\oplus E\]
together with an $H$-equivariant Lie algebroid morphism $f_{\wh{E}}\colon \wh{E}\to \dd$; 
here $H\times \h$ is the action Lie algebroid for $\mathsf{a}^R$. 
Since $\mathsf{a}^R_E$ commutes  the action $\mathsf{a}^L_E$, 
the latter defines an $\LA^\vee$-action of $(H,E)$ on $(H,\wh{E})$, commuting with the 
action of $H$ by Lie algebroid automorphisms, and such that $f_{\wh{E}}(x\circ z)=f_{\wh{E}}(x)$ for composable elements $x\in E$ and $z\in \wh{E}$. 
Since $f_{\wh{E}}$ is  a fiberwise isomorphism, it defines a trivialization. 
\begin{proposition}\label{prop:ehate}
The trivialization $\wh{E}=H\times \dd$  defined by  $f_{\wh{E}}$ identifies $\wh{E}$ 
with the action Lie algebroid for the dressing action \eqref{eq:action}.  The $H$-action by automorphisms 
reads as $g.(h,\lambda)=(hg^{-1},\Ad_g(\lambda))$, and 
%the anchor map is given by the dressing action of $\dd$ on $H$, i.e. in left trivialization 
%$TH=H\times \h$ 
%\begin{equation} \label{eq:actnow}
%\mathsf{a}_{\wh{E}}(h,\lambda)=\Ad_{h^{-1}}\pr_\h \Ad_h(\lambda).
%\end{equation}
%
the groupoid action of $E=H\times \g\rra \g$ on $\wh{E}$ is given by the moment map 
\begin{equation}\label{eq:uznow} 
\uz_{\wh{E}}(h,\lambda)=\pr_\g \Ad_h(\lambda),\end{equation}
and $(g,\xi)\circ (h,\lambda)=(gh,\lambda)$
for $\xi=\uz_{\wh{E}}(h,\lambda)$. 
\end{proposition}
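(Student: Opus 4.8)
The plan is to let $f_{\wh E}$ carry the whole argument. Since $f_{\wh E}$ is a fiberwise isomorphism, writing $z_{h,\lambda}$ for the unique element of $\wh E_h$ with $f_{\wh E}(z_{h,\lambda})=\lambda$ defines the trivialization $\wh E\cong H\times\dd$, and each assertion should follow from the three properties already recorded: $f_{\wh E}$ is a fiberwise-iso Lie algebroid morphism onto $\dd$, it is $H$-equivariant (intertwining the automorphism action with $\Ad$), and it is constant along the $\mathsf{a}^L_E$-action. First I would note the general fact that a fiberwise-iso Lie algebroid morphism $f\colon A\to\dd$ to a Lie algebra, over the base map $M\to\pt$, automatically exhibits $A$ as the action Lie algebroid for the action $\varrho(\lambda)=\mathsf{a}_A(\sigma_\lambda)$, where $\sigma_\lambda:=f^{-1}(\lambda)$: these constant sections form a frame with $[\sigma_\lambda,\sigma_\mu]=\sigma_{[\lambda,\mu]}$ (because $f$ is a morphism and injective on fibers), whence $\varrho$ is a Lie algebra action and the bracket is the action bracket. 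Applied to $f_{\wh E}$, this reduces the first claim to identifying $\varrho$ with the dressing action \eqref{eq:action}.

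For the $H$-action by automorphisms I would argue by equivariance: it covers $\mathsf{a}^R$ on the base, so $g.z_{h,\lambda}$ lies over $hg^{-1}$, while $H$-equivariance of $f_{\wh E}$ gives $f_{\wh E}(g.z_{h,\lambda})=\Ad_g\lambda$; together these force $g.z_{h,\lambda}=z_{hg^{-1},\Ad_g\lambda}$, which is the stated formula. For the groupoid action I would use that $\mathsf{a}^L_E$ covers $\mathsf{a}^L$ on the base and leaves $f_{\wh E}$ unchanged: an element $(g,\xi)\in E_g$ sends $z_{h,\lambda}$ to an element over $gh$ of unchanged $f$-value $\lambda$, i.e. to $z_{gh,\lambda}$, giving $(g,\xi)\circ(h,\lambda)=(gh,\lambda)$. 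The composability condition $\sz_E(g,\xi)=\uz_{\wh E}(z_{h,\lambda})$ then reads $\xi=\uz_{\wh E}(h,\lambda)$, leaving only the moment map to compute.

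To determine $\uz_{\wh E}$ I would use its defining properties and uniqueness. On the summand $E\subseteq\wh E$ the $\mathsf{a}^L_E$-action is the original groupoid action, whose moment map is $\tz_E$; since $\tz_E|_{E_e}=\id_\g$ by \eqref{eq:ths}, this fixes the value on the unit fibre as $\pr_\g$. By the equivariance \eqref{eq:momeq} the moment map intertwines the groupoid action with the $H$-action on the units $\g$, which the construction \eqref{eq:dd} of $\dd=\h\oplus\g$ identifies with $g\bullet\eta=\pr_\g\Ad_g\eta$. Propagating the unit value along $\mathsf{a}^L$ then gives $\uz_{\wh E}(h,\lambda)=\pr_\g\Ad_h\lambda$, using $\Ad_h\h\subseteq\h$ to pass from $\pr_\g\Ad_h\pr_\g\lambda$ to $\pr_\g\Ad_h\lambda$; as a check, this restricts on $E_h$ to $\tz_E(x)=\pr_\g\Ad_h\,\sz_E(x)$, consistent with \eqref{eq:ths}.

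It remains to identify $\varrho$ with the dressing action, which I expect to be the main obstacle, since it is the one step that uses the explicit summand structure of $\wh E$ rather than the formal properties of $f_{\wh E}$. I would compute $\varrho$ at the unit and propagate: at $h=e$ the element $z_{e,\lambda}=(\pr_\h\lambda,\pr_\g\lambda)\in\h\oplus E_e$ has $\mathsf{a}_{\wh E}(z_{e,\lambda})$ equal to the value at $e$ of the action vector field of $\pr_\h\lambda$, because the $E$-summand has vanishing anchor on its unit fibre $\g$ while the $\h$-summand is the action algebroid for $\mathsf{a}^R$; this gives $\varrho(\lambda)_e=\pr_\h\lambda$. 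Since the automorphism action is by Lie algebroid automorphisms, its anchor-intertwining property combined with $g.z_{h,\lambda}=z_{hg^{-1},\Ad_g\lambda}$ propagates this single value to $\varrho(\lambda)_h=\Ad_{h^{-1}}\pr_\h(\Ad_h\lambda)$, i.e. \eqref{eq:action}. The delicate point throughout this last step is to match the left/right and sign conventions of $\mathsf{a}^R$, of the action algebroid anchor, and of \eqref{eq:action}; the analogous unit-fibre input for the moment map is of the same character but more immediate.
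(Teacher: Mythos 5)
Your proof is correct and takes essentially the same route as the paper: each formula is extracted from the formal properties of $f_{\wh{E}}$ (fiberwise-isomorphic Lie algebroid morphism, $H$-equivariance, invariance under the groupoid action) together with a unit-fibre computation propagated by symmetry, which is exactly how the paper argues (it computes $\mathsf{a}_{\wh{E}}(h,\lambda)=T\mathsf{a}^R(h^{-1})\,\pr_\h\Ad_h(\lambda)$ from $H$-equivariance of the anchor, and gets \eqref{eq:uznow} from invariance of $\uz_{\wh{E}}$ under the automorphism action plus $\uz_{\wh{E}}(e,\lambda)=\pr_\g(\lambda)$). The only divergences are cosmetic: you propagate the moment map via \eqref{eq:momeq} and the formula $h\bullet\xi=\pr_\g\Ad_h\xi$ instead of the paper's $H$-invariance of $\uz_{\wh{E}}$, and in fixing the unit-fibre value you should add the one-line observation that $\uz_{\wh{E}}$ also kills the $\h$-summand (by construction it is $\tz_E$ of the $E$-component, since only elements of $E$ over the zero of the $TH$-factor compose nontrivially) -- a point the paper's own proof likewise leaves implicit.
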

\begin{proof}
The description of the $H$-action follows by $H$-equivariance of $f_{\wh{E}}$. 
Using the $H$-equivariance of  the 
anchor $\mathsf{a}_{\wh{E}}$, we find 
\[\mathsf{a}_{\wh{E}}(h,\lambda)=\mathsf{a}_{\wh{E}}\big(h^{-1}.(e,\Ad_h(\lambda))\big)=
T\mathsf{a}^R(h^{-1})\mathsf{a}_{\wh{E}}(e,\Ad_h(\lambda))\big)
=T\mathsf{a}^R(h^{-1})\pr_\h \Ad_h(\lambda)\]
proving $\mathsf{a}_{\wh{E}}(h,\lambda)=\varrho(\lambda)_h$. (See \eqref{eq:action}.) 
Similarly, $H$-invariance of $\uz_{\wh{E}}$ together with $\uz_{\wh{E}}(e,\lambda)=\pr_\g(\lambda)$
implies
\eqref{eq:uznow}. The formula for the groupoid action of $(H,E)$ follows since 
$f_{\wh{E}}$ is invariant under this action. 
\end{proof}
The trivialization $\wh{E}=H\times \dd$ restricts to the trivialization 
$E=H\times \g$ given by $\sz_E$. We conclude that $E$ is  isomorphic to 
the action Lie algebroid  for the dressing action of $\g\subseteq\dd$ on $H$. On the other hand, as a $\VB$-groupoid it is the action groupoid $E\rra \g$ for the action $h\bullet\xi=\pr_\g(\Ad_h\xi)$. 
This shows that the $\LA^\vee$ Lie group $(H,E)$ is fully determined by the $H$-equivariant Lie algebra triple $(\dd,\g,\h)$. To complete a proof of Mackenzie's classification, one has to show that conversely, given an $H$-equivariant triple $(\dd,\g,\h)$, the Lie algebroid and $\VB$-groupoid structure on 
$E=H\times\g$ are compatible, in the sense that 
the graph of the groupoid multiplication is a sub-Lie algebroid. (This may be verified directly, in the trivialization.) 

We have the following alternative description of $\wh{E}$:
\begin{proposition}
The map $(\mathsf{a}_{\wh{E}},\uz_{\wh{E}})\colon \wh{E}\to TH\times \g$ is 
an isomorphism of Lie algebroids. In terms of this identification, the $\VB$-groupoid
action of $E\rra \g$ is given by 
\[ \uz_{\wh{E}}(v,\xi)=\xi,\ \ x\circ (v,\xi)=(\mathsf{a}(x)\circ v,\ h\bullet \xi).\]
\end{proposition}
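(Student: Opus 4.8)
The plan is to establish separately that $(\mathsf{a}_{\wh{E}},\uz_{\wh{E}})$ is a fiberwise isomorphism and a morphism of Lie algebroids, and then to read off the description of the groupoid action from the two structure maps. For the fiberwise isomorphism I would compute in the trivialization $\wh{E}=H\times\dd$ of Proposition \ref{prop:ehate}, where $\mathsf{a}_{\wh{E}}(h,\lambda)=\varrho(\lambda)_h$ and $\uz_{\wh{E}}(h,\lambda)=\pr_\g\Ad_h(\lambda)$ by \eqref{eq:action} and \eqref{eq:uznow}. If both vanish at a point $h$, then $\pr_\h\Ad_h\lambda=0$ (since $\varrho(\lambda)_h=\Ad_{h^{-1}}\pr_\h\Ad_h\lambda$) and $\pr_\g\Ad_h\lambda=0$, whence $\Ad_h\lambda=0$ and so $\lambda=0$. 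Thus the map is fiberwise injective; as both bundles have rank $\dim\h+\dim\g=\dim\dd$ over $H$, it is a vector bundle isomorphism.

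For the morphism property I would use that $TH\times\g$ is the categorical product, in the category of Lie algebroids, of $TH$ (over $H$) and $\g$ (over a point), so that a bundle map into it is a Lie algebroid morphism precisely when both of its components are. The first component $\mathsf{a}_{\wh{E}}\colon\wh{E}\to TH$ is the anchor and hence automatically a morphism; the second component $\uz_{\wh{E}}\colon\wh{E}\to\g$ is the moment map of the $\mathsf{a}^L_E$-action of $(H,E)$ on $(H,\wh{E})$, hence a morphism by Proposition \ref{prop:uzl}. Combined with the previous paragraph, this shows that $(\mathsf{a}_{\wh{E}},\uz_{\wh{E}})$ is an isomorphism of Lie algebroids. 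One should be careful to set up the product so that the combined base map $H\to H\times\pt$ is the identity on $H$.

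For the action formula, the identity $\uz_{\wh{E}}(v,\xi)=\xi$ is tautological, since the second coordinate of the identification is by definition $\uz_{\wh{E}}$. For the groupoid action of an element $x\in E_h$ I would use that $E\rra\g$ is vacant, so by \eqref{eq:acc} the groupoid action agrees with the $\bullet$-action, $x\circ z=h\bullet z$ whenever $\sz_E(x)=\uz_{\wh{E}}(z)=\xi$. The $\g$-component of the result is then $\uz_{\wh{E}}(x\circ z)=\tz_E(x)=h\bullet\xi$, using \eqref{eq:ths} together with the equivariance \eqref{eq:momeq} of the moment map, while the $TH$-component is $\mathsf{a}_{\wh{E}}(x\circ z)=\mathsf{a}_E(x)\circ\mathsf{a}_{\wh{E}}(z)=\mathsf{a}(x)\circ v$. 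This last identity expresses that the anchor is a morphism of $\VB$-groupoid modules, and is already present in the action $x\circ(w,y)=(\mathsf{a}_E(x)\circ w,\,x\circ y)$ on $TH\times L$ from the proof of Theorem \ref{th:lulie}.

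I expect the main obstacle to be this $TH$-component of the action formula: verifying $\mathsf{a}_{\wh{E}}(x\circ z)=\mathsf{a}_E(x)\circ\mathsf{a}_{\wh{E}}(z)$ conceptually, rather than by unwinding in the trivialization $\wh{E}=H\times\dd$ a cocycle identity for the dressing action of the form $\varrho(\lambda)_{hh'}=\varrho(\xi)_h\circ\varrho(\lambda)_{h'}$, with $\xi=\pr_\g\Ad_{h'}\lambda$ and the product taken in the tangent group $TH$. A direct check of this kind is possible but uses the $\Ad(H)$-invariance of $\h$ and the tangent-group multiplication formula, and is the one genuinely computational point of the proof.
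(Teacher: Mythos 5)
Your proof is correct and takes essentially the same approach as the paper's: the morphism property is established identically (both components are Lie algebroid morphisms, the second by Proposition \ref{prop:uzl}), and the fiberwise isomorphism is checked in the same trivialization $\wh{E}=H\times\dd$ of Proposition \ref{prop:ehate}, the paper simply exhibiting the explicit inverse $(\tau,\xi)\mapsto \tau+\Ad_{h^{-1}}\xi$ where you argue injectivity plus a rank count. For the action formula, which the paper dispatches with ``the rest is clear'', your derivation is right, and the $TH$-component you flagged as the main obstacle needs no computation: $\mathsf{a}_{\wh{E}}(x\circ z)=\mathsf{a}_E(x)\circ \mathsf{a}_{\wh{E}}(z)$ is precisely the anchor-compatibility built into the definition of an $\LA^\vee$-action (a Lie algebroid comorphism over the base map $\mathsf{a}^L_H$), so the cocycle identity for the dressing action is a consequence of this, not a prerequisite.
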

\begin{proof}
The map $(\mathsf{a}_{\wh{E}},\uz_{\wh{E}})$
is a Lie algebroid morphism, since both components are. In terms of left trivialization $TH=H\times \h$ and the 
trivialization $\wh{E}=H\times\dd$, the map is given at the fiber of $h\in H$ by 
$\zeta\mapsto (\Ad_{h^{-1}}\pr_\h (\Ad_h \zeta), \pr_\g (\Ad_h \zeta))$. 
It is an isomorphism with inverse
\begin{equation}\label{eq:THg}
 T_hH\times \g\to \wh{E}_h,\ \  (\tau,\xi)\mapsto \tau+\Ad_{h^{-1}}\xi.
\end{equation}
The rest is clear. 
\end{proof}
Note that the groupoid action of $E\rra\g$ is  just the action used in the construction of $\wh{\g}=\dd$. We recover that the quotient $\wh{E}/H$ by the $\bullet$-action is $\dd$. 

\begin{remark}
Let $f_{\wh{L}}\colon \wh{L}\to \dd$ be the Lie algebroid morphism from 
Proposition \ref{prop:fl}, and let $p\colon H\times M\to M$ be the projection to the second factor.  Using the dressing action \eqref{eq:action} of $\dd$ on $H$, the vector bundle pullback  $p^*\wh{L}$ 
has a Lie algebroid structure. It is not hard to see that $p^*\wh{L}\cong TH\times L$, 
the Lie algebroid used in the construction of $\wh{L}$. 
\end{remark}
%
% \begin{remark}The  $\VB$-groupoid structure on  $\wh{E}=H\times\dd\rra \dd$ as an action groupoid for the $H$-action on $\dd$ is \emph{not} compatible with the Lie algebroid structure (the graph of the groupoid multiplication is \emph{not} a sub-Lie algebroid) unless $H$ is discrete. Indeed, compatibility would imply that the anchor is the zero morphism on the space of units.\end{remark}

\subsection{Homogeneous spaces}\label{subsec:homla}
We now specialize to the case that $M=H/K$ a homogeneous space.  The kernel of the anchor of the action Lie algebroid $M\times\h$ is an associated bundle $H\times_K\k$, where $\k$ is the Lie algebra of $K$. Since $\wh{L}$ contains the action Lie algebroid, it is a \emph{transitive} Lie algebroid: its anchor is surjective. This defines an $H$-equivariant exact sequence of Lie algebroids 
\[ 0\to \ker(\mathsf{a}_{\wh{L}})\to \wh{L}\to TM\to 0.\]
Since $\ker(\mathsf{a}_{\wh{L}})$ has a trivial anchor, it is an $H$-equivariant bundle of Lie algebras
\[ \ker(\mathsf{a}_{\wh{L}})=H\times_K \uu,\]
where $\uu=\ker(\mathsf{a}_{\wh{L}})_e$ is a Lie algebra with an action of $K$ by automorphisms. 
The inclusion of $\ker(\mathsf{a}_{M\times \h})\cong H\times_K\k$ into 
$\ker(\mathsf{a}_{\wh{L}})$ restricts to a $K$-equivariant Lie algebra morphism $\k\to \uu$, thus $(\uu,K)$ is a 
Harish-Chandra pair (cf.~ Section \ref{subsec:hc}). The  $H$-equivariant Lie algebroid morphism
$f_{\wh{L}}\colon \wh{L}\to\dd$ restricts to a $K$-equivariant Lie algebra morphism 
\[ f_{\uu}\colon \uu\to \dd,\]
whose restriction to $\k\subseteq \uu$ coincides with $T_e\phi\colon \k\to \h$, 
where  $\phi\colon K\hra H$ is the inclusion. It hence defines a morphism of 
Harish-Chandra pairs from $(\uu,K)$ to $(\dd,H)$. 
These data give a full classification: 
\begin{proposition}\label{prop:homla}
The Lie algebroids $(H/K,L)$ together with 
$\LA^\vee$-actions of $(H,E)$ are classified by 
Harish-Chandra pairs $(\uu,K)$ along with a morphism of Harish-Chandra pairs 
\begin{equation}\label{eq:hcmor} (f_\uu,\phi)\colon (\uu,K)\to (\dd,H).\end{equation}
As a Lie algebroid, $L$ is the reduction by $K$ of the action Lie algebroid for $\varrho\circ f_\uu\colon \uu\to \Gamma(TH)$:
\[ L=(H\times \uu)\qu K=H\times_K (\uu/\k).\]
The $\VB$-groupoid action of $E\rra \g$ on $L$ is given by the moment map 
\[ \uz_L([(h,\zeta \mod\k)])=\pr_\g(\Ad_h(f_\uu(\zeta)),\]
and for composable elements, $(g,\xi)\circ  [(h,\zeta\mod \k)]=[(gh,\zeta\mod\k)]$.
\end{proposition}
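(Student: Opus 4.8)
The plan is to read the proposition as a two-sided classification and to exploit the fact, established in the discussion preceding it, that the entire $\LA^\vee$-action is encoded in the transitive $H$-equivariant Lie algebroid $\wh{L}=(M\times\h)\oplus L$ together with the morphism $f_{\wh{L}}\colon\wh{L}\to\dd$ and the constant-section generators $\h\to\Gamma(\wh{L})$. Indeed, $L$ is the second summand of $\wh{L}$, the moment map is $\uz_L=f_{\wh{L}}|_L$, and the $\bullet$-action of $H$ on $L$ is the $L$-component of the $H$-action on $\wh{L}$ (the formula for $h.(\tau,y)$ after Proposition \ref{prop:fl}); by Section \ref{sec:general} the pair $(\uz_L,\bullet)$ \emph{is} the $\VB$-groupoid action of $E\rra\g$. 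So it suffices to classify such triples $(\wh{L},f_{\wh{L}},\text{generators})$ by their infinitesimal data at the base point $eK$.

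The forward map is the one already described: set $\uu=\ker(\mathsf{a}_{\wh{L}})_{eK}$, a Lie algebra (the kernel of the anchor is a bundle of Lie algebras) with its isotropy $K$-action, $\k=\uu\cap\h$ embedded via the generators, and $f_\uu=f_{\wh{L}}|_\uu$; the generator condition forces the differential of the $K$-action on $\uu$ to be $\ad_\k$, so $(\uu,K)$ is a Harish-Chandra pair and $(f_\uu,\phi)$ a morphism to $(\dd,H)$. For the reverse map, given such data I would form the action Lie algebroid $H\times\uu$ for $\varrho\circ f_\uu\colon\uu\to\Gamma(TH)$ (a Lie-algebra action, being the composite of the Lie-algebra morphism $f_\uu$ with the dressing action $\varrho$). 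The key point is that $\k\subseteq\uu$ acts through $f_\uu|_\k=T_e\phi\subseteq\h$, and $\varrho|_\h$ is the action by left-invariant vector fields, so the $\k$-generators span exactly the vertical bundle of the principal $K$-bundle $H\to H/K$; the $K$-reduction is therefore defined and yields the vector bundle $L=(H\times\uu)\qu K=H\times_K(\uu/\k)$ with its reduced Lie algebroid structure.

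I would then equip $L$ with the $\VB$-groupoid action of $E\rra\g$ via the stated moment map $\uz_L([(h,\zeta\bmod\k)])=\pr_\g(\Ad_h f_\uu(\zeta))$ and composition $(g,\xi)\circ[(h,\zeta\bmod\k)]=[(gh,\zeta\bmod\k)]$, and verify well-definedness and the module axioms. Well-definedness rests on two facts: $f_\uu$ is $K$-equivariant (handling the associated-bundle identification), and the $H$-action on $\dd$ preserves $\h$ — which holds because its differential is $\ad_\h$ — so that $\pr_\g(\Ad_h f_\uu(\nu))=\pr_\g(\Ad_h\nu)=0$ for $\nu\in\k$, making the formula independent of the lift $\zeta$. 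Both formulas are the descent to $L$ of the moment map $\uz_{\wh{E}}(h,\lambda)=\pr_\g\Ad_h\lambda$ and composition on $\wh{E}=H\times\dd$ from Proposition \ref{prop:ehate}, pulled back along the algebroid morphism $(h,\zeta)\mapsto(h,f_\uu(\zeta))$; this exhibits the construction as a genuine $\LA^\vee$-action of $(H,E)$.

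Finally I must check that the two constructions are mutually inverse. Applying the forward map to the reconstructed $(L,\text{action})$ returns $\ker(\mathsf{a}_{\wh{L}})_{eK}=\uu$ with the same $f_\uu$, since transitivity and $H$-equivariance force $\wh{L}\cong H\times_K\wh{L}_{eK}$ with $\wh{L}_{eK}\cong(\h\oplus\uu)/\k$. The main obstacle is the converse direction: showing that the reduced Lie algebroid bracket on $L=H\times_K(\uu/\k)$ agrees with the original matched-pair bracket on the second summand of $\wh{L}$. I would settle this by comparing brackets of $H$-invariant sections, using on one side the identification $\wh{L}=(TH\times L)/H$ and the invariant lift $\breve{\sigma}$ from the proof of Theorem \ref{th:lulie}, and on the other the explicit trivialization $\wh{E}=H\times\dd$ of the dressing-action algebroid together with the identification $\wh{E}\cong TH\times\g$; the two brackets then coincide because both are induced, through the same $K$-reduction, from the bracket of $\dd$ via $f_\uu$.
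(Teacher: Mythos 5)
Your overall route is the one the paper itself takes (extract the Harish--Chandra data from $\wh{L}$ and $f_{\wh{L}}$, reconstruct $L$ as the $K$-reduction of the action algebroid $H\times\uu$, and transfer the comorphism property from $\wh{E}$), and your forward map and well-definedness checks are fine. But the key step of the reverse direction has a genuine gap. You assert that the reconstructed groupoid action on $H\times\uu$ is an $\LA^\vee$-action because its moment map and composition are ``pulled back'' along the algebroid morphism $\Psi\colon (h,\zeta)\mapsto (h,f_\uu(\zeta))$ into $\wh{E}$. This morphism need not be injective: $f_\uu$ is only required to be a $K$-equivariant Lie algebra morphism extending $\k\hookrightarrow\h$, and its injectivity is exactly the extra hypothesis isolated in Proposition \ref{prop:addendum}. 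When $\ker(f_\uu)\neq 0$ the comorphism property does not transfer: the graph of the action on $H\times\uu$, i.e.\ the set of triples $((gh,\zeta),(g,\xi),(h,\zeta))$ with $\xi=\pr_\g(\Ad_h f_\uu(\zeta))$, is \emph{strictly} contained in the $(\Psi\times\id\times\Psi)$-preimage of the graph of the action on $\wh{E}$, since the preimage allows $\zeta'\neq\zeta$ with $f_\uu(\zeta')=f_\uu(\zeta)$ in the first slot. Knowing the latter graph is a Lie subalgebroid therefore gives no control over the former. This is precisely why the paper embeds $H\times\uu$ into $\wh{E}\times\uu$ via $(h,\zeta)\mapsto((h,f_\uu(\zeta)),\zeta)$: the extra $\uu$-factor restores injectivity, the image is a Lie subalgebroid preserved by the $(H,E)$-action, and restriction of the comorphism then does the job. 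Your argument needs this correction (or a direct verification of the bracket condition).

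The round-trip step is also not actually established. Your closing justification --- that the original bracket on $L$ and the reduced action-algebroid bracket ``both are induced, through the same $K$-reduction, from the bracket of $\dd$ via $f_\uu$'' --- is not a valid argument: the bracket of the action algebroid $H\times\uu$ is generated by the $\uu$-bracket on constant sections together with the action $\varrho\circ f_\uu$, not by the bracket of $\dd$, and when $f_\uu$ has a kernel nothing is ``induced from $\dd$'' at all. The mechanism you are missing is the paper's part (i): for $K=\{e\}$ the projection $\pr_L\colon \ker(\mathsf{a}_{\wh{L}})\to L$ along $H\times\h$ is an $H$-equivariant bundle isomorphism which, by Proposition \ref{prop:l-equivariant}, preserves brackets of invariant sections; this is what identifies the bracket on $\bullet$-invariant sections of $L$ with the $\uu$-bracket, exhibits $L$ as an action algebroid, and then yields the stated moment map and anchor by reading off the $\g$- and $\h$-components of $f_{\wh{L}}$. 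Moreover, for general $K$ one cannot run an invariant-section comparison over $H/K$ directly, since the fibers of $L$ carry $K$-isotropy and invariant sections do not trivialize $L$; one must first lift to $p^!L$ over $H$, where the case $K=\{e\}$ applies, and then reduce --- the paper's step (ii), which is absent from your sketch.
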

\begin{proof}
(i) We begin with the case $K=\{e\}$, so that $M=H$.  Suppose $(M,L)$ and the action are given. Since $K=\{e\}$, the anchor map restricts to an isomorphism on $H\times \h$, 
which hence has $\ker(\mathsf{a}_{\wh{L}})$ as an $H$-invariant complement. The projection $\pr_L\colon \wh{L}\to L$ along $H\times \h$ restricts to an $H$-equivariant vector bundle isomorphism 
\[ \pr_L\colon H\times \uu=\ker(\mathsf{a}_{\wh{L}})\xra{\cong} L.\] 
By Proposition \ref{prop:l-equivariant}, this map is a Lie algebra morphism on $H$-invariant sections. This shows that the bracket on invariant sections of $L$ is given by the Lie bracket on $\uu$; hence $L$ is an action Lie algebroid for some $\uu$-action on $H$. 
To compute the anchor and the moment map in terms of this the trivialization,  let $y\in L_h$ be given. Let $y=y'+y''$ 
be its decomposition in $\wh{L}_h$, into 
\[ y'=(h,\zeta)\in \ker(\mathsf{a}_{\wh{L}})=H\times \uu,\ \ \ \ 
y''=(h,\tau)\in H\times \h.\] 
We obtain 
\[ \uz_L(y)=
f_{\wh{L}}(y)=
f_{\wh{L}}(y')+f_{\wh{L}}(y'')=\Ad_h(f_\uu(\zeta))+\tau.\] 
The $\g$-component of this equation shows 
\begin{equation}\label{eq:lmo} \uz_L(y)=\pr_\g(\Ad_h(f_\uu(\zeta)),\end{equation}
while the $\h$-component tells us that $\tau=-\pr_\h (\Ad_h(f_\uu(\zeta)))$. Since $H\times \h$ 
is identified with the action Lie algebroid for the action $\mathsf{a}^L_H$, its anchor map is $(h,\tau)\mapsto
-\tau^R|_h=-\Ad_{h^{-1}}\tau$, using  left trivialization $TH=H\times \h$. 
Since $\mathsf{a}_L(y)=\mathsf{a}_{\wh{L}}(y'+y'')=\mathsf{a}_{\wh{L}}(y'')=-\tau^R|_h$, this shows 
\begin{equation}\label{eq:lact} 
\mathsf{a}_L(y)=\Ad_{h^{-1}}\pr_\h \Ad_h f_\uu(\zeta)
=\varrho(f_\uu(\zeta))_h
.\end{equation}
%
%(ii) Since $L\cong H\times \uu$ is an action Lie algebroid, the map $\pr_\uu\colon L\to \uu$ defined by this trivialization is a morphism of Lie algebroids.  Consider the embedding 
%\[ L\to \wh{E}\times \uu=(TH\times \g)\times \uu,\ \ y\mapsto (\mathsf{a}_L(y),\ \uz_L(y),\ \kappa(y)).\] 
%It is a Lie algebroid morphism, since all three components are, and it intertwines the actions of $(H,E)$ on $L$ and on $\wh{E}\times \uu$. The formula from (i) show that the of $L$ under this map consists of all $(z,\zeta)\in \wh{E}\times \uu$ such that $f_{\wh{E}}(z)=f_\uu(\zeta)$. 

(ii) For general $K$, let $p\colon H\to H/K$ be the projection. The $\LA^\vee$-action of 
$(H,E)$ on $L$ lifts to an action on $p^!L\subseteq TH\times L$. Hence, the discussion above applies, and identifies $p^!L\cong H\times \uu$. Note that this identification 
intertwines the $K$-action on $p^!L$ with the action $k.(h,\zeta)=(hk^{-1},\,\Ad_k\zeta)$, 
and intertwines the generators for these actions. Hence 
$L=p^!L\qu K=(H\times \uu)\qu K$. The moment map $\uz_{p^!L}$, being $K$-invariant, descends to the quotient, and so does the action of $(H,E)$.

%produces an embedding \[ p^!L\hra \wh{E}\times \uu\] as the Lie subalgebroid consisting of all $(z,\zeta)$ such that $f_{\wh{E}}(z)=f_\uu(\zeta)$. This embedding is $K$-equivariant, using the given action of $K$ on $\uu$ and the action on $\wh{E}$ as a subgroup of $H$. Furthermore, it also intertwines the generators, given by the generators $\k\to \Gamma(\wh{E})$ (as constant sections of $H\times \dd$) and $\k\to \uu$ (the inclusion described above). Hence $L=(p^!L)\qu K$ is embedded as a Lagrangian subbundle of $(\wh{E}\times \uu)\qu K$. 

(iii) The discussion above shows that $(H/K,L)$ is uniquely determined by 
$(\uu,K)$ and the morphism \eqref{eq:hcmor}. Conversely, given these data, 
define $L=(H\times \uu)\qu K$, with the moment map $\uz_L$, and the $(H,E)$-action as above. We need to show that the action $\mathsf{a}_L$ is an $\LA$-comorphism.  
Rather than proving this directly, consider the embedding 
\[ p^!L=H\times \uu\to \wh{E}\times \uu,\ \ (h,\zeta)\mapsto ((h,f_\uu(\zeta)),\zeta).\]
This is a $K$-equivariant morphism of Lie algebroids, and intertwines the moment maps, as well as the actions of $E\rra \g$. Since the $(H,E)$-action on $\wh{E}\times \uu$ 
(given by the action on the first factor) is an 
$\LA$-comorphism, the same is true for the action on the Lie subalgebroid $p^!L$, and hence on $L$.  
%we obtain $(H/K,L)$ with the action of $(H,E)$ as the reduction of the set of all $(z,\zeta)\in \wh{E}\times \uu$ such that $f_{\wh{E}}(z)=f_\uu(\zeta)$, by the action of $K$. As a Lie algebroid, this is the action Lie algebroid $H\times \uu$ for the action  $\varrho\circ f_\uu$.  
\end{proof}
%
%Note that by construction, $L$ is embedded as a sub-Lie algebroid of $(\wh{E}\times \uu)\qu K$.
%
\begin{example}
The simplest examples of Harish-Chandra pairs with morphisms to $(\dd,H)$ are 
\[ (\dd,H),\ (\dd,\{e\}),\ (\g,\{e\}),\ (\h,H),\ (\h,\{e\}).\] The corresponding 
Lie algebroids with actions of $(H,E)$ are, respectively, 
\[ (\pt,\g),\ (H,\wh{E}),\ (H,E),\ (\pt,0),\ (H,TH).\] 
\end{example}

\begin{remark}
The constructions in this section can be generalized to the category $\ca{LG}^\vee$ of Lie groupoids, with morphisms the comorphisms of Lie groupoids. 
\end{remark}

Recall that a Lie algebroid structure on a vector bundle $E$ corresponds to a linear (equivalently, homogeneous of degree $-1$) Poisson structure on the dual bundle $E^*$. Under this correspondence, $\LA$-comorphisms $E_1\da E_2$ 
correspond to Poisson morphisms $E_1^*\to E_2^*$. Hence, all of the results above may be restated 
in Poisson-geometric terms. In particular, if $(H,E)$ is an $\LA^\vee$ Lie group then the total space of $E^*$ is a Poisson Lie group. (If $(H,E)$ is classified by the Lie algebra triple $(\dd,\g,\h)$, then the Manin triple of $E^*$ is 
$(\dd\ltimes\dd^*,\,\g\ltimes \h^*,\,\h\ltimes \g^*)$.) 

An $\LA^\vee$-action on $(M,L)$ becomes a Poisson action of $E^*$ on the Poisson manifold $L^*$, compatible with the fiberwise linear structure. Proposition \ref{prop:homla} classifies such actions when 
$H$ acts transitively on $M$; however, $E^*$ need not act transitively on $L^*$. 
\begin{proposition}\label{prop:addendum}
In the setting of Proposition \ref{prop:homla}, the action of $E^*$ on $L^*$ is transitive 
if and only if the map $f_\uu\colon \uu\to \dd$ is injective, with $f_\uu(\uu)\cap \h=\k$.  The classification of $\LA^\vee$-actions with this property is thus given by $K$-invariant Lie subalgebras $\cc\subseteq \dd$
with $\cc\cap\h=\k$. 
\end{proposition}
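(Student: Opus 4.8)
The plan is to reduce transitivity of the $E^*$-action on $L^*$ to a linear-algebra condition on the single fibre over the base point $m_0=eK$, and then to recognize that condition as the injectivity requirement on $f_\uu$. For this, I would first use that $E^*$ is a Lie group lying over $H$: the bundle projection $q\colon E^*\to H$ is a surjective group homomorphism, with kernel the abelian vector group $(E^*)_e=\g^*$, and the $E^*$-action on $L^*$ covers, through $q$, the transitive $H$-action on $M=H/K$. Hence the $E^*$-orbit of any $\lambda\in L^*_{m_0}$ meets the fibre $L^*_{m_0}$ exactly in the orbit $\mathrm{Stab}_{E^*}(m_0)\cdot\lambda$, so $E^*$ acts transitively on $L^*$ if and only if the isotropy group $\mathrm{Stab}_{E^*}(m_0)=q^{-1}(K)$ acts transitively on $L^*_{m_0}=(\uu/\k)^*=\on{ann}(\k)$. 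Note that $q^{-1}(K)$ contains the vector subgroup $\g^*=(E^*)_e$ and projects onto $K$.

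The main step is to compute this isotropy action. Using the $\VB$-module duality of Appendix \ref{app:vb}, I would show that $K$ acts on $\on{ann}(\k)$ linearly, as the dual of its action on the fibre $L_{m_0}=\uu/\k$, while the vector group $\g^*=(E^*)_e$ acts by \emph{translations}: pairing $\eta\circ\lambda$ against $y\in L_{m_0}$ picks up the moment-map term $\l\eta,\uz_L(y)\r$, so that $\eta\circ\lambda=\lambda+\uz_0^*(\eta)$, where $\uz_0:=\uz_L|_{L_{m_0}}\colon\uu/\k\to\g,\ \bar\zeta\mapsto\pr_\g f_\uu(\zeta)$ is the restricted moment map (formula from Proposition \ref{prop:homla}) and $\uz_0^*\colon\g^*\to\on{ann}(\k)$ its transpose. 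Extracting this translation description from the duality formalism is the part I expect to require the most care. Granting it, $\im(\uz_0^*)$ is $K$-invariant (since $\uz_0$ is $K$-equivariant, $m_0$ being $K$-fixed), so the $q^{-1}(K)$-action descends to the \emph{linear} $K$-action on $\on{ann}(\k)/\im(\uz_0^*)$; a linear action can be transitive on a vector space only if that space is $0$. Therefore the isotropy acts transitively if and only if $\uz_0^*$ is surjective, equivalently $\uz_0$ is injective.

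It then remains to match the conditions. Since $\uz_0(\bar\zeta)=\pr_\g f_\uu(\zeta)$, and $\pr_\g f_\uu(\zeta)=0$ exactly when $f_\uu(\zeta)\in\h$, injectivity of $\uz_0$ says $f_\uu^{-1}(\h)\subseteq\k$. Because $f_\uu|_\k$ is the inclusion $\k\hra\h$, this forces $\ker f_\uu\subseteq f_\uu^{-1}(\h)=\k$ and hence $\ker f_\uu=0$, while also giving $f_\uu(\uu)\cap\h=f_\uu(\k)=\k$; conversely these two properties return $f_\uu^{-1}(\h)=\k$. Thus transitivity of the $E^*$-action is equivalent to $f_\uu$ being injective with $f_\uu(\uu)\cap\h=\k$, which is the first assertion.

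Finally, for the classification I would restrict the bijection of Proposition \ref{prop:homla} to this class of data. When $f_\uu$ is injective its image $\cc:=f_\uu(\uu)\subseteq\dd$ is a $K$-invariant Lie subalgebra (as $f_\uu$ is a $K$-equivariant Lie algebra morphism) with $\cc\cap\h=\k$, and $f_\uu\colon\uu\xra{\cong}\cc$ recovers all the data, the Harish-Chandra inclusion $\k\subseteq\uu$ becoming $\k=\cc\cap\h\subseteq\cc$. Conversely, any $K$-invariant Lie subalgebra $\cc\subseteq\dd$ with $\cc\cap\h=\k$ yields admissible data $(\uu,K,f_\uu)=(\cc,K,\text{incl})$ satisfying both the hypotheses of Proposition \ref{prop:homla} and the injectivity conditions above. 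Hence the $\LA^\vee$-actions with $E^*$ acting transitively on $L^*$ are classified by such subalgebras $\cc$.
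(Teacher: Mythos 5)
Your proposal is correct and follows essentially the same route as the paper: reduce to the fibre over $eK$, recognize the dualized $\VB$-groupoid action of $\g^*=E^*|_e$ as translation by $f_\uu^*$ (your $\uz_0^*$), convert transitivity into surjectivity of that transpose, i.e.\ injectivity of the induced map $\uu/\k\to\dd/\h$, and then into injectivity of $f_\uu$ with $f_\uu(\uu)\cap\h=\k$. The only real difference is that you spell out, via the linear $K$-action on $\on{ann}(\k)/\on{im}(\uz_0^*)$, why transitivity of the full isotropy $q^{-1}(K)$ already forces the translation subgroup $\g^*$ to act transitively on the fibre --- a point the paper compresses into the single assertion that transitivity of $E^*$ on $L^*$ is equivalent to transitivity of $E^*|_e$ on $L^*|_{eK}$.
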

\begin{proof}
Since $H$ acts transitively on the base $M$, the action of $E^*$ on $L^*$ is transitive if and only if the action of $E^*|_e$ on $L^*|_{eK}$ is transitive. By dualizing the $\VB$-groupoid action of $E_e=\g\rra \g$ on $L_{eK}=\uz/\k$, we find that this is the action 
$\mu\circ \nu=\nu+f_\uu^*(\mu)$ for $\mu\in\g^*=\on{ann}_{\dd^*}(\h)$ and $\nu\in \on{ann}_{\uz^*}(\k)$. 
(This is well-defined, since $f_\uu(\k)\subseteq \h$ is equivalent to $f_\uu^*(\g^*)=f_\uu^*(\on{ann}_{\dd^*}(\h))\subseteq 
\on{ann}_{\uz^*}(\k)$.) This action is transitive if and only if $f_\uu^*\colon \dd^*\to \uu^*$ restricts to a surjective map $\on{ann}_{\dd^*}(\h)\to \on{ann}_{\uz^*}(\k)$, 
or equivalently if and only if $f_\uu$ induces an injective map $\uz/\k\to \dd/\h$. Since $f_\uu$ restricts to the identity map on $\k$, this is equivalent to $f_\uu$ itself being injective, with 
$f_\uu(\uu)\cap\h=\k$.  
\end{proof}

\section{General properties of Dirac actions}
In this Section, we will extend the results from the last section to Dirac actions. 
Throughout, we fix a Dirac-Lie group $(H,\,\AA,\,E)$.  In particular, $(H,E)$ is an $\LA^\vee$-Lie group, classified by an $H$-equivariant Lie algebra triple $(\dd,\g,\h)$. 

\subsection{Properties of the structure maps}
We will use the $\VB$-groupoid structure $\AA\rra \g$; recall that $E\rra \g$ is a vacant $\VB$-subgroupoid. 
Since the anchor map $\mathsf{a}_\AA\colon \AA\to TH$ is a $\VB$-groupoid morphism, the dual map $\mathsf{a}_\AA^*\colon T^*H\to \AA^*=\AA$ is a $\VB$-groupoid morphism, 
%
%\begin{equation}\label{eq:nicedia}
\[ 
 \xymatrix{ {T^*H} \ar[r]<2pt>\ar@<-2pt>[r] \ar[d]_{\mathsf{a}_\AA^*}&{\ \h^*}\ar[d]^{\varphi}\\
\AA \ar[r]<2pt>\ar@<-2pt>[r] & \g}
%\end{equation}  
\]
(See Appendix \ref{app:vb}.) The right vertical map in this diagram 
\begin{equation}\label{eq:varphi}
 \varphi\colon \h^*\to \g
 \end{equation}
is the map of units; it is characterized by the property
$\sz_\AA\circ \mathsf{a}_\AA^*=\varphi\circ \sz_{T^*H}$, with a similar 
property $\tz_\AA\circ \mathsf{a}_\AA^*=\varphi\circ \tz_{T^*H}$ for the target map. 
Let $\gamma_\AA\in \Gamma(S^2\AA)$ be the element defined by the metric. Then $\sz_\AA(\gamma_\AA)$ is a function on $H$ with values in $S^2\g$, and similarly for $\tz_\AA(\gamma_\AA)$. Let 
\[ \gamma_\g:=\tz_\AA(\gamma_\AA)|_e\in S^2\g.\]
\begin{lemma}
We have 
\[ \tz_\AA(\gamma_\AA)=-\sz_\AA(\gamma_\AA)=\gamma_\g.\]
In particular,  both sides 
are constant functions from $H$ to $S^2\g$. 
\end{lemma}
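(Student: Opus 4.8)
The plan is to run everything off the \emph{multiplicativity of the metric}. Applying \eqref{eq:-1} to the Lagrangian relation $R=\on{Gr}(\Mult_\AA)$, I get that whenever $(x_1,x_2)\sim_{\Mult_\AA} x$ and $(x_1',x_2')\sim_{\Mult_\AA} x'$ over a common base point,
\[
\l x,x'\r=\l x_1,x_1'\r+\l x_2,x_2'\r.
\]
Since $E$ is a Lagrangian (hence isotropic) subbundle, this identity has two consequences I would record first. (i) One-sided multiplication by elements of $E$ is isometric: if $z,z'\in E$ and $y,y'$ are composable with them on the appropriate side, then $\l z\circ y,\,z'\circ y'\r=\l y,y'\r$, because the $E$-contribution $\l z,z'\r$ vanishes; similarly on the other side. (ii) The groupoid inversion is anti-isometric: taking $(x,x^{-1})\sim_{\Mult_\AA}\tz_\AA(x)$ and $(x',x'^{-1})\sim_{\Mult_\AA}\tz_\AA(x')$, the left-hand side $\l\tz_\AA(x),\tz_\AA(x')\r$ vanishes because $\g=E_e$ is isotropic, so $\l x^{-1},x'^{-1}\r=-\l x,x'\r$.

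For the constancy of $\tz_\AA(\gamma_\AA)$, the key step is to produce, for each $h\in H$, an isometric isomorphism $\Psi_h\colon \AA_h\to\AA_e$ that \emph{preserves the target map}. I would define it by right multiplication: for $y\in\AA_h$ let $z_y\in E_{h^{-1}}$ be the unique element with $\tz_\AA(z_y)=\sz_\AA(y)$ (using that $E\rra\g$ is vacant), and set $\Psi_h(y)=y\circ z_y\in\AA_{hh^{-1}}=\AA_e$. This map is linear in $y$ (both $y$ and $z_y$ depend linearly on $y$, and composition is linear on composable pairs), it is isometric by (i), and it satisfies $\tz_\AA\circ\Psi_h=\tz_\AA$ because in the groupoid $\AA\rra\g$ the target of a product equals the target of its first factor (Theorem \ref{th:ingroupoid}). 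Being an isometry, $\Psi_h$ carries the inverse-metric element $\gamma_{\AA,h}$ to $\gamma_{\AA,e}$; applying $\tz_\AA$ in each factor and using $\tz_\AA\circ\Psi_h=\tz_\AA$ then yields $\tz_\AA(\gamma_{\AA,h})=\tz_\AA(\gamma_{\AA,e})=\gamma_\g$. The symmetric construction, using \emph{left} multiplication by the unique $w_y\in E_{h^{-1}}$ with $\sz_\AA(w_y)=\tz_\AA(y)$ (which preserves the source map), shows that $\sz_\AA(\gamma_\AA)$ is likewise constant.

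Finally, to get the sign relation I would work in the single fiber $\AA_e$, where inversion restricts to a map $\Inv\colon\AA_e\to\AA_e$ that swaps $\sz_\AA$ and $\tz_\AA$ and, by (ii), is anti-isometric; hence it sends $\gamma_{\AA,e}$ to $-\gamma_{\AA,e}$. Combining $\sz_\AA\circ\Inv=\tz_\AA$ with anti-isometry gives $\tz_\AA(\gamma_{\AA,e})=-\sz_\AA(\gamma_{\AA,e})$, that is $\sz_\AA(\gamma_\AA)|_e=-\gamma_\g$; together with the two constancy statements this gives $\tz_\AA(\gamma_\AA)=\gamma_\g=-\sz_\AA(\gamma_\AA)$ identically. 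The subtle point — and what I expect to be the main obstacle before one spots the right construction — is the constancy: it is tempting to try to deduce it from equivariance of the moment map $\tz_\AA$ under the induced $H$-action on $\g$, but that only gives $\tz_\AA(\gamma_\AA)|_h=h\bullet\gamma_\g$ and leaves one needing the $\bullet$-invariance of $\gamma_\g$, which is not at hand. The device that avoids this is exactly the target-preserving isometry $\Psi_h$: the isotropy of $E$ is what makes $\Psi_h$ metric-preserving, and that is the whole point.
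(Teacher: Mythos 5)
Your proof is correct, and it runs on the same two facts as the paper's proof --- multiplicativity of the metric, i.e.\ \eqref{eq:-1} applied to the Lagrangian graph of $\Mult_\AA$, together with isotropy of $E$ --- but both steps are implemented differently, and the constancy step is genuinely different. For the sign relation, the paper computes what inversion does to specific dual elements: if $x=\tz_\AA^*(\mu)|_e$ then $x^{-1}=-\sz_\AA^*(\mu)|_e$, and pairing these with themselves gives $\sz_\AA(\gamma_\AA)|_e=-\tz_\AA(\gamma_\AA)|_e$; you obtain the same identity from the abstract principle that a linear anti-isometry of $\AA_e$ sends $\gamma_{\AA,e}$ to $-\gamma_{\AA,e}$, combined with the fact that inversion interchanges $\sz_\AA$ and $\tz_\AA$. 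For constancy, the paper transports the dual elements from $e$ to $h$ by composing with zero elements, proving $\tz_\AA^*(\mu)|_e\circ 0_h=\tz_\AA^*(\mu)|_h$ and using that $0_h$ contributes nothing to the pairing; composability there rests on the identification $\on{ran}(\tz_\AA^*)|_e=\ker(\sz_\AA)|_e$, which the paper invokes without comment. Your isometries $\Psi_h(y)=y\circ z_y$ go the other way, from $\AA_h$ to $\AA_e$, and are defined on the whole fiber precisely because the vacancy of $E\rra\g$ lets you choose $z_y\in E_{h^{-1}}$ for each $y$; this sidesteps the $\on{ran}(\tz_\AA^*)=\ker(\sz_\AA)$ identification entirely and is arguably the more transparent argument. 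Your closing remark is also accurate: plain $\bullet$-equivariance of $\tz_\AA$ only yields $\tz_\AA(\gamma_\AA)|_h=h\bullet\gamma_\g$, and the $\bullet$-invariance of $\gamma_\g$ is not available at this stage. What the paper's element-level formulation buys is reusability: the same device of composing dual elements with zero elements reappears as \eqref{eq:aha} in the proof of Proposition \ref{prop:thishelps}, where one compares $\gamma_\PP$ on a module $\PP$ with $\gamma_\g$ and there is no second fiber of the same bundle for a map like your $\Psi_h$ to land in.
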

\begin{proof}
For all $x\in \AA$, with groupoid inverse $x^{-1}$, we have 
\[ \l x,x\r+\l x^{-1},x^{-1}\r=\l x\circ x^{-1},x\circ x^{-1}\r=
\l \tz_\AA(x),\,\tz_\AA(x)\r=0.\] 
 If  $x\in \ker(\sz_\AA|_e)=\on{ran}(\tz_\AA^*|_e)$ then $x^{-1}\in \ker(\tz_\AA|_e)=\on{ran}(\sz_\AA^*|_e)$, and 
 \[\l x,\xi\r+\l x^{-1},\xi\r=\l x\circ x^{-1},\ \xi\circ \xi\r
 =\l \tz_\AA(x),\xi\r=0
 \]
 for all $\xi\in\g$. 
Hence, if $x=\tz_\AA^*(\mu)|_e$ then 
$x^{-1}=-\sz_\AA^*(\mu)|_e$, and 
\[ \l \sz_\AA^*(\mu),\ \sz_\AA^*(\mu)\r|_e=\l x^{-1},x^{-1}\r=-\l x,x\r=-
\l \tz_\AA^*(\mu),\ \tz_\AA^*(\mu)\r|_e.\]
This proves $\sz_\AA(\gamma_\AA)|_e=-\tz_\AA(\gamma_\AA)|_e$. For all $h\in H$, the composition $\tz_\AA^*(\mu)|_e\circ 0_h$ is well-defined since $\on{ran}(\tz_\AA^*)_e=\ker(\sz_\AA)_e$. The composition  lies in  $\ker(\sz_\AA)_h=\on{ran}(\tz_\AA^*)_h$; indeed  $\tz_\AA^*(\mu)|_e\circ 0_h=\tz_\AA^*(\mu)|_h$, as we can see by taking the inner product with the identity $\tz_\AA(y)\circ y=y$, valid for all $y\in \AA_h$. Hence 
\[ \l \tz_\AA^*(\mu)|_h,\ \tz_\AA^*(\mu)|_h\r=
\l \tz_\AA^*(\mu)|_e\circ 0_h,\ \tz_\AA^*(\mu)|_e\circ 0_h\r=\l \tz_\AA^*(\mu)|_e,\ \tz_\AA^*(\mu)|_e\r,\] 
proving that $\tz_\AA(\gamma_\AA)$ is constant. Similarly, $\sz_\AA(\gamma_\AA)$ is constant.
\end{proof}

Consider a Dirac action of $(H,\,\AA,\,E)$ on a Dirac manifold $(M,\,\PP,\,L)$. In particular $H$ acts on $M$; let $\mathsf{a}_{M\times \h}\colon M\times \h\to TM$ be the infinitesimal action map. 
The dual map is the symplectic moment map $\uz_{T^*M}\colon T^*M\to \h^*$.  The anchor maps  intertwine the 
action of $\AA$ on $\PP$ with the action of $TH$ on $TM$. Hence, their duals intertwine the groupoid action of $T^*H\rra \h^*$ on $T^*M$ with the action of $\AA\rra \g$ on $\PP$. (See Appendix \ref{app:vb}.) In particular, the moment maps for the two groupoid actions are intertwined, giving a commutative diagram 
\[
\xymatrix{ {T^*M} \ar[r]^{\uz_{T^*M}}\ar[d]_{\mathsf{a}_\PP^*} &{\h^*}\ar[d]^{\varphi}\\
{\PP} \ar[r]_{\uz_\PP} & {\g}
}
\]
\begin{proposition}\label{prop:thishelps}
We have $\uz_\PP(\gamma_\PP)=\gamma_\g$, 
where $\gamma_\PP\in \Gamma(S^2\PP)$ is the element defined by the metric. 
\end{proposition}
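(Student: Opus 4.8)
The plan is to verify the identity fiberwise, in its adjoint form. Using the metric to identify $\PP\cong\PP^*$, write $\uz_\PP^*\colon\g^*\to\PP$ for the metric-adjoint of $\uz_\PP$, characterized by $\l\uz_\PP^*(\mu),y\r_\PP=\l\mu,\uz_\PP(y)\r$. A routine computation with dual metrics shows that for any $\mu,\nu\in\g^*$ one has $\l(S^2\uz_\PP)(\gamma_\PP),\mu\otimes\nu\r=\l\uz_\PP^*(\mu),\uz_\PP^*(\nu)\r_\PP$, and, by the same principle applied to the preceding Lemma, $\l\gamma_\g,\mu\otimes\nu\r=\l(S^2\tz_\AA)(\gamma_\AA)|_e,\mu\otimes\nu\r=\l\tz_\AA^*(\mu)|_e,\tz_\AA^*(\nu)|_e\r_\AA$. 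Thus it suffices to prove, at each $m\in M$, that $\l\uz_\PP^*(\mu)|_m,\uz_\PP^*(\nu)|_m\r_\PP=\l\tz_\AA^*(\mu)|_e,\tz_\AA^*(\nu)|_e\r_\AA$; the point is then that the right-hand side does not depend on $m$.

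The crux is to realize $\uz_\PP^*(\mu)$ through the $\VB$-groupoid action. First I would record the metric-additivity of the action morphism: since $\mathsf{a}_\PP$ is a Courant morphism, applying \eqref{eq:-1} to the relations $(x_i,y_i)\sim_{\mathsf{a}_\PP}x_i\circ y_i$ gives $\l x_1\circ y_1,x_2\circ y_2\r_\PP=\l x_1,x_2\r_\AA+\l y_1,y_2\r_\PP$ whenever the compositions are defined (the left factors lying in a common $\AA_h$, the right factors in a common $\PP_m$). Next, by the preceding Lemma $\tz_\AA^*(\mu)|_e\in\on{ran}(\tz_\AA^*|_e)=\ker(\sz_\AA|_e)$, so $\sz_\AA(\tz_\AA^*(\mu)|_e)=0=\uz_\PP(0_m)$ and the product $\tz_\AA^*(\mu)|_e\circ 0_m\in\PP_m$ is defined. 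I claim $\uz_\PP^*(\mu)|_m=\tz_\AA^*(\mu)|_e\circ 0_m$. To see this, pair against an arbitrary $z\in\PP_m$, written through the unit as $z=\uz_\PP(z)\circ z$ with $\uz_\PP(z)\in\g$; metric-additivity then yields $\l\tz_\AA^*(\mu)|_e\circ 0_m,\,\uz_\PP(z)\circ z\r_\PP=\l\tz_\AA^*(\mu)|_e,\uz_\PP(z)\r_\AA+\l 0_m,z\r_\PP=\l\mu,\uz_\PP(z)\r$, where I have used $\tz_\AA(\uz_\PP(z))=\uz_\PP(z)$ for the unit. This is exactly the defining property of $\uz_\PP^*(\mu)|_m$.

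Granting the claim, the proposition follows by one further application of metric-additivity: $\l\uz_\PP^*(\mu)|_m,\uz_\PP^*(\nu)|_m\r_\PP=\l\tz_\AA^*(\mu)|_e\circ 0_m,\tz_\AA^*(\nu)|_e\circ 0_m\r_\PP=\l\tz_\AA^*(\mu)|_e,\tz_\AA^*(\nu)|_e\r_\AA+\l 0_m,0_m\r_\PP$, and the last term vanishes. Since this holds for all $\mu,\nu\in\g^*$ and the outcome is independent of $m$, we conclude $\uz_\PP(\gamma_\PP)=\gamma_\g$. I expect the main obstacle to be the identification $\uz_\PP^*(\mu)|_m=\tz_\AA^*(\mu)|_e\circ 0_m$: this is the bridge that transfers the preceding Lemma from the groupoid $\AA$ to the module $\PP$, and making it precise relies on the fiberwise linearity of the $\VB$-module action together with the composability input $\sz_\AA(\tz_\AA^*(\mu)|_e)=0$ supplied by that Lemma. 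Everything after that is a formal consequence of metric-additivity and the isotropy of the units in $\g$.
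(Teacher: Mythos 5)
Your proposal is correct and is essentially the paper's own proof: both rest on the bridge identity $\uz_\PP^*(\mu)_m=\tz_\AA^*(\mu)|_e\circ 0_m$ (the paper's equation \eqref{eq:aha}) and then conclude by pairing this element with itself via the Lagrangian property \eqref{eq:-1} of $\on{Gr}(\mathsf{a}_\PP)$, together with the preceding Lemma's computation of $\gamma_\g$. The only divergence is in how the bridge identity is verified — you pair $\tz_\AA^*(\mu)|_e\circ 0_m$ against an arbitrary $z=\uz_\PP(z)\circ z$, using isotropy of the graph (metric-additivity) and non-degeneracy of the fiber metric, whereas the paper shows the triple $(\uz_\PP^*(\mu)_m,\tz_\AA^*(\mu)|_e,0_m)$ is orthogonal to $\on{Gr}(\mathsf{a}_\PP)$ and invokes $\on{Gr}(\mathsf{a}_\PP)^\perp=\on{Gr}(\mathsf{a}_\PP)$ — a minor variation of the same duality argument.
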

\begin{proof}
Given $\mu \in \g^*$ and $m\in M$, we claim that
\begin{equation}\label{eq:aha}
\uz_\PP^*(\mu)_m=\tz_\AA^*|_e(\mu)\circ 0_m,\end{equation} 
with $0_m\in \PP_m$ the zero element. To see this, note first that the action is well-defined since $\on{ran}(\tz_\AA^*)=\ker(\sz_\AA)$. Furthermore, whenever
$x\in \AA_e,\ y\in \PP_m$ are composable 
elements, then
\[ \l x\circ y,\,\uz_\PP^*(\mu)\r=\l \uz_\PP(x\circ y),\,\mu\r=\l\tz_\AA(x),\,\mu\r=\l x,\,\tz_\AA^*(\mu)|_e\r.\]
This shows that the element $(\uz_\PP^*(\mu)_{m},\,\tz_\AA^*(\mu)|_e,0_m)$ is orthogonal to the graph of $\mathsf{a}_\PP$. Since $\on{Gr}(\mathsf{a}_\PP)^\perp=\on{Gr}(\mathsf{a}_\PP)$, this proves Equation \eqref{eq:aha}. Taking an inner product of this element with itself, we obtain
$\l \uz_\PP^*(\mu)_m,\ \uz_\PP^*(\mu)_m\r=\l \tz_\AA^*(\mu)|_e,\ \tz_\AA^*|(\mu)|_e\r$, proving $\uz_\PP(\gamma_\PP)=\gamma_\g$.  %Using \eqref{eq:aha}, we also find that 
%\[ \mathsf{a}_\PP(\uz_\PP^*(\mu)_m)=\mathsf{a}_\AA(\tz_\AA^*(\mu)|_e)\circ 0_m=\mathsf{a}_{M\times \h}|_m(\mathsf{a}_\AA(\tz_\AA^*(\mu)|_e))=\mathsf{a}_{M\times \h}|_m(\varphi^*(\mu))\]where now $0_m$ signifies the zero vector in $T_mM$, and $\circ$ is the action of $TH$ on $TM$. This shows $\mathsf{a}_{M\times \h}\circ \varphi^*=\mathsf{a}_\PP\circ \uz_\PP^*$, hence $\uz_\PP\circ \mathsf{a}_\PP^*=\varphi\circ \mathsf{a}_{M\times \h}^*$. 
\end{proof}

Using $\gamma_\g$ and $\varphi$, we define $\beta\in S^2\dd$ as the symmetric bilinear form on $\dd^*=\g^*\oplus \h^*$ given by 
\[ \beta(\mu_1,\mu_2)=\begin{cases}
\gamma_\g(\mu_1,\mu_2) & \mu_1,\mu_2\in \g^*,\\
\l\mu_1,\varphi(\mu_2)\r& \mu_1\in \g^*,\ \mu_2\in\h^*,\\
0 & \mu_1,\mu_2\in \h^*.
\end{cases}\]
By definition $\g\subseteq \dd$ is $\beta$-coisotropic, and $\pr_\g(\beta)=\gamma_\g$. In \cite{lib:dir} it was shown that $\beta$ is $\ad(\dd)$-invariant as 
well as $H$-invariant, and that the Dirac Lie group $(H,\AA,E)$ is classified by the $H$-equivariant Dirac-Manin triple $(\dd,\g,\h)_\beta$. Below we will give a conceptual explanation of these facts.

\subsection{The $H$-action on $\PP$}\label{subsec:bullet}
Consider a Dirac action of $(H,\,\AA,\,E)$ on a Dirac manifold $(M,\,\PP,\,L)$.  The action of $E\subseteq \AA$ on $\PP$ defines a  $\bullet$-action (cf.~ Section \ref{sec:general}) 
of $H$ on $\PP$, given as $h\bullet y=x\circ y$ 
where $x\in E_h$ is the unique element such that $\sz_\AA(x)=\uz_\PP(y)$. 
This action preserves the subbundle $L$, it satisfies 
$\uz_\PP(h\bullet y)=h\bullet \uz_\PP(y)$, 
and it preserves the metric on $\PP$, since 
\[\l h\bullet y,\ h\bullet y\r=
\l x\circ y,x\circ y\r=\l x,x\r+\l y,y\r=\l y,y\r.\] 
On the other hand, the $\bullet$-action preserves neither the Courant bracket on $\Gamma(\AA)$ nor the anchor, in general. 
\begin{examples}
\begin{enumerate}
\item 
For a Poisson action of a Poisson Lie group  $(H,\pi_H)$ on a Poisson manifold $(M,\pi_M)$, the corresponding $\bullet$-action on $\PP=\T M$ is the unique action that preserves the 
decomposition $\T M=TM\oplus \on{Gr}(\pi_M)$ as well as the metric on $\T M$, and that restricts to the tangent lift on $TM$. Note that this is different from the action $h\mapsto \T \mathsf{a}_M(h)$, which does not preserve $\on{Gr}(\pi_M)$ in general. 
\item Let $(\dd,\g,\h)_\beta=(\h\oplus \ol{\h},\,\h_\Delta,\h\oplus 0)_\beta$ defined by an $H$-invariant metric on $\h$ corresponding to the 
Cartan-Dirac structure $(\AA,E)=(H\times \dd,\,H\times \g)$, cf. Example \ref{ex:cartandirac}. The $\bullet$-action on $\g$ is trivial. Therefore, given a Dirac action on $(\PP,L)=(M\times \dd,\,M\times \mathfrak{l})$ the $\bullet$-action on $\PP$ is simply $h\bullet(m,\tau,\tau')=(h.m,\tau,\tau')$. 
\end{enumerate}
\end{examples}
\begin{theorem}\label{th:haction}
For any Dirac action of $(H,\AA,E)$ on $(M,\PP,L)$, the space $\Gamma(\PP)^H$  of $\bullet$-invariant sections is closed under the Courant bracket. For all $h\in H$ and $y\in \PP$, the difference $\mathsf{a}_\PP(h\bullet y)-h.\mathsf{a}_\PP(y)\in TM$ (where the dot indicates the tangent lift of the 
action) is tangent to $H$-orbits. 
Hence, if the action of $H$ on $M$ is a principal action, then the quotient space becomes a Dirac manifold 
$(M/H,\,\PP/H,\,L/H)$, with the quotient map defining a Dirac morphism. 
\end{theorem}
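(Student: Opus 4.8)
The plan is to follow the template of Proposition~\ref{prop:l-equivariant}, with the Lie algebroid comorphism replaced by the action Courant morphism $\ca{A}_\PP$ and the Lie bracket by the Courant bracket, treating the three assertions in turn. The device that makes the first (and main) assertion work without any explicit bracket computation is to encode $\bullet$-invariance through \emph{two} Courant morphisms and to transport it across the bracket using only the integrability property~\eqref{eq:0}.

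For $\sigma\in\Gamma(\PP)$, I would define its lift $\breve{\sigma}\in\Gamma(\AA\times\PP)$ over $H\times M$ by $\breve{\sigma}_{(h,m)}=(\theta(h,m),\sigma_m)$, where $\theta(h,m)\in E_h$ is the unique element with $\sz_\AA(\theta(h,m))=\uz_\PP(\sigma_m)$; this is smooth since $E\rra\g$ is vacant. Because $\theta(h,m)\circ\sigma_m=h\bullet\sigma_m$, the relation $\breve{\sigma}\sim_{\ca{A}_\PP}\sigma$ holds if and only if $\sigma_{h.m}=h\bullet\sigma_m$, i.e.\ if and only if $\sigma$ is $\bullet$-invariant. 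On the other hand, let $T\colon\AA\times\PP\da\PP$ (base map $\on{pr}_M$) be the Courant morphism whose fiber is $\{(y,(\eta,y)):\eta\in E_h,\ y\in\PP_m\}$; it is the product of the identity on $\PP$ with the Courant morphism $(H,\AA,E)\da(\pt,0,0)$ defined by the Dirac structure $E$, and its integrability is just involutivity of $E$. For this morphism $\breve{\sigma}\sim_T\sigma$ holds for \emph{every} $\sigma$, since the $\AA$-component $\theta$ lies in $E$ and the $\PP$-component is the pullback of $\sigma$.

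Now for $\sigma_1,\sigma_2\in\Gamma(\PP)^H$, applying~\eqref{eq:0} to each morphism yields $\Cour{\breve{\sigma}_1,\breve{\sigma}_2}\sim_T\Cour{\sigma_1,\sigma_2}$ and $\Cour{\breve{\sigma}_1,\breve{\sigma}_2}\sim_{\ca{A}_\PP}\Cour{\sigma_1,\sigma_2}$. The first relation forces $\Cour{\breve{\sigma}_1,\breve{\sigma}_2}$ again to have the shape of a lift: its $\PP$-component is the pullback of $\Cour{\sigma_1,\sigma_2}$ and its $\AA$-component is some $\eta\in E_h$. The second relation then reads $\eta\circ\Cour{\sigma_1,\sigma_2}_m=\Cour{\sigma_1,\sigma_2}_{h.m}$, and since $\eta\in E_h$ with $\sz_\AA(\eta)=\uz_\PP(\Cour{\sigma_1,\sigma_2}_m)$ this is precisely $h\bullet\Cour{\sigma_1,\sigma_2}_m=\Cour{\sigma_1,\sigma_2}_{h.m}$, so $\Cour{\sigma_1,\sigma_2}$ is $\bullet$-invariant. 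The only structural input needed is that $T$ is a genuine Courant morphism, which reduces to involutivity of $E$; everything else is handled by~\eqref{eq:0}.

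The second assertion is the direct computation from Proposition~\ref{prop:l-equivariant}: writing $h\bullet y=x\circ y$ with $x\in E_h$ and using that the anchor is a $\VB$-groupoid morphism, $\mathsf{a}_\PP(h\bullet y)=\mathsf{a}_\AA(x)\circ\mathsf{a}_\PP(y)$ for the tangent-lift action of $TH$ on $TM$, while $v\circ w-h.w$ is tangent to orbits for $v\in T_hH,\ w\in T_mM$. For the reduction I would form $\PP/H$ and $L/H$ as quotient bundles over $M/H$: the metric descends because $\bullet$ preserves it, the anchor descends to $\mathsf{a}_{\PP/H}$ because the second assertion makes $Tp\circ\mathsf{a}_\PP$ invariant, and the Courant bracket descends through $\Gamma(\PP/H)\cong\Gamma(\PP)^H$ by the first assertion, the axioms and the Lagrangian/involutive properties of $L/H$ being inherited from $\PP$. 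Finally the Lagrangian subbundle $R=\{([y],y):y\in\PP\}\subseteq(\PP/H)\times\ol{\PP}$ along $\on{Gr}(p)$ defines the quotient map as a Dirac morphism: it is isotropic because the metric descends, Lagrangian by dimension, integrable because the bracket on $\PP/H$ is built from invariant sections, and it satisfies $(\ca{D})$ because $L_m\to(L/H)_{p(m)}$ is a fiberwise isomorphism. The main obstacle is the first assertion, and its crux is exactly the choice of the auxiliary morphism $T$ that lets~\eqref{eq:0} replace a direct analysis of the product Courant bracket with its anchor and metric cross-terms.
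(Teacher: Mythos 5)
Your proposal is correct and takes essentially the same route as the paper: your auxiliary morphism $T$ is exactly the Courant morphism $S\colon \AA\times \PP\da \PP$ in the paper's proof, your explicit lift $\breve{\sigma}$ is the paper's (unique) section $\tilde{\sigma}$ witnessing $\bullet$-invariance via the two relations $\sim_T$ and $\sim_{\mathsf{a}_\PP}$, and closure under the Courant bracket is deduced in both cases by applying \eqref{eq:0} to this pair of relations. The anchor computation via $\mathsf{a}_\PP(x\circ y)=\mathsf{a}_\AA(x)\circ \mathsf{a}_\PP(y)$ and the subsequent reduction step likewise match the paper's argument.
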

\begin{proof}
Define a Courant morphism $S\colon \AA\times \PP\da \PP$, 
with base map $H\times M\to M$ projection to the second factor, by declaring that $(x,y)\sim_S y'$ if and only if 
$y=y'$ and $x\in E$. A section $\sigma\in\Gamma(\PP)$ is $\bullet$-invariant if and only if there exists a section 
$\tilde{\sigma}\in \Gamma(\AA\times\PP)$ with $\tilde{\sigma}\sim_S \sigma$ and $\tilde{\sigma}\sim_{\mathsf{a}_\PP}\sigma$. (This section is necessarily unique.) 
Since these relations are preserved under Courant brackets (cf.~\eqref{eq:0}), it follows that the Courant bracket of two $\bullet$-invariant
sections is again invariant. 

For composable elements $x\in \AA$ and $y\in \PP$, we have 
$\mathsf{a}_\PP(x\circ y)=\mathsf{a}_\AA(x)\circ  \mathsf{a}_\PP(y)$, where the right hand side 
uses the action of $TH$ on $TP$. Writing $\mathsf{a}_\AA(x)=(h,\tau)$ in left trivialization, 
the difference $\mathsf{a}_\AA(x)\circ w-
h.w$ for any $w\in TM$ is tangent to $H$-orbits in $M$; in fact it is given by the infinitesimal action 
of $\Ad_h(\tau)$. 
%
% recall that the action of $TH$ on $TM$ is given in terms of left trivialization $TH=H\times \h$ by the formula 
%\[ (h,\tau).v=h.v-\mathsf{a}_{M\times \h}(\Ad_h \tau),\]
% for $h\in H,\ \tau\in\h$ and $v\in TM$. In our case, since $\sz(x)=\uz(y)$, we have that 
% $\mathsf{a}(x)=(h,\Ad_{h^{-1}}\pr_\h \Ad_h \uz(y))$. It follows that 
% \[ \mathsf{a}_\PP(h\bullet y)=\mathsf{a}_\PP(x\circ y)=\mathsf{a}_\AA(x).\mathsf{a}_\PP(y)=h.\mathsf{a}_\PP(y)-\mathsf{a}_{M\times \h}\big(\pr_\h(\Ad_h \uz(y))\big),\] 
%proving \eqref{it:d}. 
\end{proof}
The analogue to Proposition \ref{prop:uzl} holds true on invariant sections:
\begin{proposition}\label{prop:uzp}
For $\sigma_1,\sigma_2\in \Gamma(\PP)^H$, 
\begin{equation}\label{eq:uzformula}
 \uz_\PP(\Cour{\sigma_1,\sigma_2})=[\uz_\PP(\sigma_1),\uz_\PP(\sigma_2)]_\g
+\L_{\mathsf{a}_\PP(\sigma_1)}\uz_\PP(\sigma_2)-\L_{\mathsf{a}_\PP(\sigma_2)}\uz_\PP(\sigma_1);
\end{equation}
here $[\cdot,\cdot]_\g$ denotes the pointwise Lie bracket of $\g$-valued functions on $M$.  In particular, 
$\Gamma(\PP)^H\cap \ker(\uz_\PP)$ is closed under the Courant bracket. 
Furthermore, the space $\Gamma(\PP)^H\cap \ker(\uz_\PP)\cap \ker(\mathsf{a}_\PP)$ is an ideal in $\Gamma(\PP)^H$, relative to the Courant bracket. 
\end{proposition}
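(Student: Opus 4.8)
The plan is to mirror the proof of Proposition \ref{prop:uzl}, but carried out in the ambient Courant algebroid rather than in $L$. Recall from the proof of Theorem \ref{th:haction} that a section $\sigma\in\Gamma(\PP)$ is $\bullet$-invariant precisely when it admits a (necessarily unique) lift $\ti\sigma\in\Gamma(\AA\times\PP)$ with $\ti\sigma\sim_S\sigma$ and $\ti\sigma\sim_{\mathsf{a}_\PP}\sigma$, where $S$ is the Courant morphism introduced there. Writing $\ti\sigma=(\alpha,\sigma)$, the relation $\ti\sigma\sim_S\sigma$ forces $\alpha\in\Gamma(E)$, and restricting $\ti\sigma\sim_{\mathsf{a}_\PP}\sigma$ over $\{e\}\times M$ (where the base map is the identity) identifies $\alpha|_e=\uz_\PP(\sigma)$, by the uniqueness in Theorem \ref{th:ingroupoid}\eqref{it:1b}. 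Given invariant $\sigma_1,\sigma_2$, applying \eqref{eq:0} to both $\ti\sigma_i\sim_S\sigma_i$ and $\ti\sigma_i\sim_{\mathsf{a}_\PP}\sigma_i$ shows that $\Cour{\ti\sigma_1,\ti\sigma_2}$ is again an invariant lift, now of $\Cour{\sigma_1,\sigma_2}$; in particular its $\AA$-component lies in $\Gamma(E)$ and restricts at $e$ to $\uz_\PP(\Cour{\sigma_1,\sigma_2})$. Formula \eqref{eq:uzformula} thus reduces to computing this $\AA$-component at the unit.

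The new feature compared with Proposition \ref{prop:uzl} is that one cannot simply restrict the action morphism to $e$ and argue inside a product Courant algebroid ``$\g\times\PP$'': the fibre $\AA_e$ is strictly larger than $\g$, and $\g=E_e$, being Lagrangian in $\AA_e$, carries the zero metric there rather than $\gamma_\g$, so it is not a Courant algebroid over a point. The computation must therefore be performed in $\AA\times\PP$ and then restricted. Using the source trivialization $E=H\times\g$ from Section \ref{sec:general}, I would choose a basis $\{\sfe_a\}$ of $\g$ with source-constant sections $\wh{\sfe}_a\in\Gamma(E)$, and write $\alpha_i=\sum_a\xi_i^a\,\wh{\sfe}_a$ where $\xi_i=\uz_\PP(\sigma_i)\colon M\to\g$. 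Expanding $\Cour{\ti\sigma_1,\ti\sigma_2}=\Cour{\sum_a\xi_1^a\wh{\sfe}_a+\sigma_1,\ \sum_b\xi_2^b\wh{\sfe}_b+\sigma_2}$ by the Courant-bracket Leibniz rules and the product structure (so that $\Cour{\wh{\sfe}_a,\sigma_j}=0$ and the mixed metric terms vanish) produces exactly three surviving contributions to the $\AA$-component.

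The first contribution, $\sum_{a,b}\xi_1^a\xi_2^b\,\Cour{\wh{\sfe}_a,\wh{\sfe}_b}$, restricts at $e$ to $[\uz_\PP(\sigma_1),\uz_\PP(\sigma_2)]_\g$: here one uses that $\sz_E\colon E\to\g$ is a Lie algebroid morphism (Proposition \ref{prop:uzl}) with $\sz_E(\wh{\sfe}_a)=\sfe_a$ constant, so $\sz_E\Cour{\wh{\sfe}_a,\wh{\sfe}_b}=[\sfe_a,\sfe_b]_\g$, together with $\sz_E|_{E_e}=\id_\g$. The remaining two contributions are the Leibniz corrections in which the anchor $\mathsf{a}_\PP(\sigma_j)\in TM$ differentiates the $\g$-valued coefficient functions $\xi_i$; since $\mathsf{a}(\wh{\sfe}_a)$ points in the $TH$-direction it annihilates functions of $M$, and one is left precisely with $\L_{\mathsf{a}_\PP(\sigma_1)}\uz_\PP(\sigma_2)-\L_{\mathsf{a}_\PP(\sigma_2)}\uz_\PP(\sigma_1)$. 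Summing gives \eqref{eq:uzformula}. I expect this product-bracket bookkeeping --- tracking which Leibniz and axiom-(c) terms survive restriction to $e$ --- to be the one genuinely technical step; everything else is formal.

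The three consequences are then immediate. Closure of $\Gamma(\PP)^H$ under the Courant bracket is Theorem \ref{th:haction}; if in addition $\uz_\PP(\sigma_1)=\uz_\PP(\sigma_2)=0$, every term on the right of \eqref{eq:uzformula} vanishes, so $\Gamma(\PP)^H\cap\ker(\uz_\PP)$ is a subalgebra. For the ideal statement, take $\sigma_1\in\Gamma(\PP)^H\cap\ker(\uz_\PP)\cap\ker(\mathsf{a}_\PP)$ and $\sigma_2\in\Gamma(\PP)^H$ arbitrary: the bracket is invariant by Theorem \ref{th:haction}; its image under $\uz_\PP$ vanishes because $\uz_\PP(\sigma_1)=0$ kills the first and third terms of \eqref{eq:uzformula} while $\mathsf{a}_\PP(\sigma_1)=0$ kills the second; and $\mathsf{a}_\PP(\Cour{\sigma_1,\sigma_2})=[\mathsf{a}_\PP(\sigma_1),\mathsf{a}_\PP(\sigma_2)]=0$ since the anchor preserves brackets. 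Hence $\Cour{\sigma_1,\sigma_2}$ again lies in the stated subspace, which is therefore an ideal.
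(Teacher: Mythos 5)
Your proposal is correct and takes essentially the same route as the paper: both proofs rest on the unique invariant lift $\wt{\sigma}=(\uz_\PP(\sigma),\sigma)\in\Gamma(\AA\times\PP)$, with first entry a source-constant section of $\pr_H^*E$ valued in $\g$, compute the Courant bracket of two such lifts in the product Courant algebroid, and identify the result with the lift of $\Cour{\sigma_1,\sigma_2}$ using \eqref{eq:0} and uniqueness. Your basis expansion and the appeal to Proposition \ref{prop:uzl} simply fill in the bracket computation that the paper records in a single display, and your handling of the consequences (subalgebra and ideal statements) matches the paper's.
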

\begin{proof}
We use the notation from the proof of Theorem \ref{th:haction}.
The section $\wt{\sigma}$ associated to $\sigma\in\Gamma(\PP)^H$ has the form
$\wt{\sigma}=(\uz_\PP(\sigma),\sigma)$, where the first entry $\uz_\PP(\sigma)$ is regarded as a section of $\pr_H^*E=(H\times M)\times \g$ (depending trivially on the $H$-variable) and the 
second entry as a section of $\pr_M^*\PP$. The Courant bracket of two such sections 
$\wt{\sigma_i}=(\uz_\PP(\sigma_i),\sigma_i)$ is given by 
\[ \Cour{\wt{\sigma_1},\wt{\sigma_2}}=
\Big([\uz_\PP(\sigma_1),\uz_\PP(\sigma_2)]_\g+\L_{\mathsf{a}_\PP(\sigma_1)}\uz_\PP(\sigma_2)-\L_{\mathsf{a}_\PP(\sigma_2)}\uz_\PP(\sigma_1),\ \Cour{\sigma_1,\sigma_2}\Big).
\]
Since this section is related to $\sigma=\Cour{\sigma_1,\sigma_2}$ under both $\mathsf{a}_\PP$ and $S$
(cf. \eqref{eq:0})
it coincides with $(\uz_\PP(\Cour{\sigma_1,\sigma_2}),\Cour{\sigma_1,\sigma_2})$, proving \eqref{eq:uzformula}.

 Suppose $\sigma_1,\sigma_2\in \Gamma(\PP)^H$, with 
$\sig_1\in \ker(\uz_\PP)\cap \ker(\mathsf{a}_\PP)$. Then 
$\mathsf{a}_\PP(\Cour{\sigma_1,\sigma_2})=0$ since $\ker(\mathsf{a}_\PP)$ is an ideal for the Courant  bracket, while \eqref{eq:uzformula} shows that $\uz_\PP( \Cour{\sigma_1,\sigma_2})=0$. 
Similarly for $\Cour{\sigma_2,\sigma_1}$. 
\end{proof}
In particular, if the action of $H$ on $M$ is a principal action, and $\uz_\PP$ has constant rank, then 
$\ker(\uz_\PP)/H$ is an involutive subbundle of the Courant algebroid $\PP/H$. If 
$(\mathsf{a}_\PP,\uz_\PP)\colon \PP\to TM\times \g$ has constant rank, then the sections of $(\ker(\uz_\PP)\cap \ker(\mathsf{a}_\PP))/H$ form a Courant ideal. 
\begin{remark}
Note that the right hand side of formula \eqref{eq:uzformula} is skew-symmetric in $\sigma_1,\sigma_2$, hence 
so is the left hand side. This is possible since $\uz_\PP$ vanishes on $\Gamma(\on{ran}(\mathsf{a}_\PP^*))^H$, as 
one may also prove directly. 
%a consequence of Proposition \ref{prop:thishelps}.
\end{remark}

\subsection{The Lu-Dirac structure}
We next extend the construction of the Lu-Lie algebroid $\wh{L}$ to incorporate the Courant algebroid. 

\begin{theorem}\label{th:lu}
There is a canonically defined $H$-equivariant Dirac structure $(\wh{\PP},\wh{L})$ on $M$, such that 
\begin{equation}\label{eq:pdec}
\wh{\PP}=(M\times (\h\oplus\h^*))\oplus \PP,\ \ \ \wh{L}=(M\times \h)\oplus L\end{equation}
as (metrized) vector bundles, with anchor map $\mathsf{a}_{\wh{\PP}}(\tau+\nu,y)=\mathsf{a}_{M\times \h}(\tau)+\mathsf{a}_\PP(y)$. 
%
%\[\mathsf{a}_{\wh{\PP}}\colon \wh{\PP}\to TM,\ \ \ \mathsf{a}_{\wh{\PP}}(\tau+\nu,y)=\mathsf{a}_{M\times \h}(\tau)+\mathsf{a}_\PP(y).\]
%
The constant sections of $M\times\h$ are generators for the $H$-action on $\wh{\PP}$, and the induced $H$-action on $(M\times \h)^\perp/(M\times \h)\cong \PP$ is the $\bullet$-action on $\PP$. The map 
\begin{equation}\label{eq:fpmap}
 f_{\wh{\PP}}\colon \wh{\PP}\to \dd,\ \ \ f_{\wh{\PP}}(\tau+\nu,y)=\tau+\varphi(\nu)+\uz_\PP(y)\end{equation}
is an $H$-equivariant bundle morphism, with $ f_{\wh{\PP}}\circ  \mathsf{a}_{\wh{\PP}}^*=0$, 
compatible with brackets, and with 
\begin{equation}\label{eq:fpgamma}
 f_{\wh{\PP}}(\gamma_{\wh{\PP}})=\beta.\end{equation}
\end{theorem}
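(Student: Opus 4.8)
The plan is to follow the proof of Theorem~\ref{th:lulie}, keeping track of the ambient Courant algebroids throughout. First I would extend the given Dirac action to a Dirac action of $(H,\AA,E)$ on the product Dirac manifold $(H\times M,\ \T H\times\PP,\ TH\times L)$, where $\T H$ is the standard Courant algebroid and $TH\subseteq\T H$ is the Dirac structure of the zero Poisson structure. Concretely, the $\VB$-groupoid $\AA\rra\g$ acts on $\T H\times\PP$ through its given action on $\PP$ together with the action on $\T H$ determined by the anchor $\mathsf{a}_\AA$ and its dual $\mathsf{a}_\AA^*$; on composable elements this reads $x\circ(w,y)=(\mathsf{a}_\AA(x)\cdot w,\ x\circ y)$, with moment map $\uz_{\T H\times\PP}(w,y)=\uz_\PP(y)$. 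As in the earlier proof, the fact that $\mathsf{a}_{\T H\times\PP}$ is a Courant morphism is seen by exhibiting its graph as a clean intersection of $\on{Gr}(\Mult_{\T H})\times\on{Gr}(\mathsf{a}_\PP)$ with the locus matching the $TH$-component of $\mathsf{a}_\AA$ against the tangent action; being the intersection of Lagrangian involutive subbundles it is again Lagrangian and involutive.

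The associated $\bullet$-action of $H$ on $\T H\times\PP$ commutes with $\T\mathsf{a}^R(h)\times\id_\PP$ and acts principally on the base $H\times M$. Applying Theorem~\ref{th:haction} to this extended action, the quotient
\[ \wh{\PP}=(\T H\times\PP)/H,\qquad \wh{L}=(TH\times L)/H\]
is an $H$-equivariant Dirac structure over $(H\times M)/H=M$, with the $H$-action by Dirac automorphisms induced by $\T\mathsf{a}^R\times\id_\PP$. Restricting the prequotient to $\{e\}\times M$ produces the stated vector-bundle identifications and the anchor formula, since the anchor of $\T H$ is projection to $TH$ (killing the $\h^*$-summand) and the $\h$-summand consists of left-invariant fields. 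Those same left-invariant fields generate $\T\mathsf{a}^R$, so the constant sections of $M\times\h$ are generators for the $H$-action; and because $M\times\h$ is isotropic with $(M\times\h)^\perp=(M\times\h)\oplus\PP$, the induced action on $(M\times\h)^\perp/(M\times\h)\cong\PP$ is exactly the $\bullet$-action used above.

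I would then turn to $f_{\wh{\PP}}(\tau+\nu,y)=\tau+\varphi(\nu)+\uz_\PP(y)$. Its $H$-equivariance follows from that of $\varphi$ and $\uz_\PP$ together with the explicit $H$-action on $\wh{\PP}$. Compatibility with brackets, equation~\eqref{eq:bracketpreserving}, I would check on sections obtained by lifting to $\bullet$-invariant sections of $\T H\times\PP$: on the $\h$-summand the bracket reproduces the Lie bracket of $\h\subseteq\dd$; on $\Gamma(\PP)^H$ it is precisely Proposition~\ref{prop:uzp}, whose right-hand side is the $\g$-bracket in $\dd$ plus the anchor-derivative terms; the remaining cross-terms in the $\h^*$-direction are governed by the characterization of $\varphi$ as the unit map of the $\VB$-groupoid morphism $\mathsf{a}_\AA^*\colon T^*H\to\AA$. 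Once \eqref{eq:bracketpreserving} is established, the identity $f_{\wh{\PP}}\circ\mathsf{a}_{\wh{\PP}}^*=0$ is automatic from Courant axiom~(c), as noted in \eqref{eq:rema}.

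For the final identity \eqref{eq:fpgamma} I would compute directly. The metric on $\wh{\PP}$ is the orthogonal sum of the pairing on $\h\oplus\h^*$ and the metric on $\PP$, so the dual element splits as $\gamma_{\wh{\PP}}=\sum_i e_i\odot e^i+\gamma_\PP$ for dual bases $e_i\in\h,\ e^i\in\h^*$. Applying $f_{\wh{\PP}}$ term by term gives $\sum_i e_i\odot\varphi(e^i)+\uz_\PP(\gamma_\PP)$; by Proposition~\ref{prop:thishelps} the second summand equals $\gamma_\g\in S^2\g$, while the first is the $\h\odot\g$ contribution built from $\varphi$. Comparing with the definition of $\beta$ — whose $S^2\g$-part is $\gamma_\g$, whose $\h\odot\g$-part encodes $\langle\cdot,\varphi(\cdot)\rangle$, and whose $S^2\h$-part vanishes because $\h$ is isotropic — yields $f_{\wh{\PP}}(\gamma_{\wh{\PP}})=\beta$. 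I expect the main obstacle to lie in the bracket-compatibility of $f_{\wh{\PP}}$ along the $\h^*$-directions: the $\PP$- and $\h$-contributions are delivered cleanly by Proposition~\ref{prop:uzp} and the structure of $\dd$, whereas the $\h^*$-terms require carefully tracking how $\varphi$ intertwines the cotangent-groupoid structure of $T^*H$ with the groupoid $\AA\rra\g$, which is precisely where the compatibility encoded in $\mathsf{a}_\AA^*$ must be invoked.
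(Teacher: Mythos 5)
Your overall strategy coincides with the paper's: realize $(\wh{\PP},\wh{L})$ as the quotient of $(\T H\times\PP,\ TH\times L)$ by the $\bullet$-action coming from an extended Dirac action of $(H,\AA,E)$, invoke Theorem \ref{th:haction} to form the quotient, and then use Proposition \ref{prop:uzp} for bracket compatibility and Proposition \ref{prop:thishelps} for the metric identity. However, the key technical ingredient is wrong: the extended action you write down, $x\circ(w,y)=(\mathsf{a}_\AA(x)\cdot w,\ x\circ y)$ with moment map $\uz_{\T H\times\PP}(w,y)=\uz_\PP(y)$, is the naive product action, and it is \emph{not} a Courant morphism. Once the element $w+\nu\in\T_hH$ has a nonzero cotangent component $\nu$, composability in the groupoid $T^*H\rra\h^*$ forces the acting element in the $\T H$-factor to be of the form $(\mathsf{a}_\AA(x),-\mu)$ with $\mu\neq 0$; multiplicativity of the pairing on $\T H$ then produces cross terms of the form $-\l\mathsf{a}_\AA(x),\tilde\mu\r-\l\mathsf{a}_\AA(\tilde x),\mu\r$ when pairing two elements of the graph, and with the uncorrected action $x\circ y$ on the $\PP$-factor these terms do not cancel, so the graph is not even isotropic. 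The paper's action is obtained by composing $\Mult_{\T H}\times\mathsf{a}_\PP$ with the Courant morphism $\AA\da\T H\times\AA$, $x\sim(\mathsf{a}_\AA(x)-\mu,\ x+\mathsf{a}_\AA^*(\mu))$; the resulting action on the $\PP$-factor is $(x+\mathsf{a}_\AA^*(\mu))\circ y$, and exactly this correction cancels the cross terms. Your justification via a ``clean intersection of Lagrangian involutive subbundles'' also does not survive the passage from Lie algebroids to Courant algebroids: an intersection of Lagrangian subbundles is only isotropic, and the rank count fails; that argument is specific to the proof of Theorem \ref{th:lulie}, where there is no metric to respect, whereas here one needs the composition-of-relations formalism.

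This error propagates into the second half of your argument. The corrected action has moment map $\uz_{\T H\times\PP}(w+\nu,y)=\uz_\PP(y)+\varphi(\tz_{T^*H}(\nu))$, and it is precisely the extra term that descends to the $\varphi(\nu)$-summand of $f_{\wh{\PP}}$ in \eqref{eq:fpmap}. With your moment map the map descending to the quotient would be $(\tau+\nu,y)\mapsto\tau+\uz_\PP(y)$, for which \eqref{eq:fpgamma} fails (one gets only $\gamma_\g$, missing the $\h\odot\g$ cross term of $\beta$ built from $\varphi$), and $f_{\wh{\PP}}\circ\mathsf{a}_{\wh{\PP}}^*=0$ fails as well, since $\uz_\PP\circ\mathsf{a}_\PP^*=\varphi\circ\uz_{T^*M}$ is nonzero in general. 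You instead adopt formula \eqref{eq:fpmap} from the statement and verify the metric identity directly — that computation is fine — but the bracket compatibility ``along the $\h^*$-directions'', which you defer to the ``characterization of $\varphi$'', is exactly where the paper's setup does the work: with the twisted action, the pair $(\mathsf{a}_{\T H\times\PP},\uz_{\T H\times\PP})$ descends to $(\mathsf{a}_{\wh{\PP}},f_{\wh{\PP}})$, so Proposition \ref{prop:uzp} applied upstairs yields \eqref{eq:bracketpreserving} downstairs in one stroke. Without the twist there is no such commuting diagram, and the $\h^*$-cross-terms you flag as the main obstacle remain genuinely unaccounted for.
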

\begin{proof}
We will obtain $\wh{\PP}$ as a quotient $(\T H\times \PP)/H$. 
Consider the Dirac action of $(H,\AA,E)$ on $(H\times M, \T H\times \PP,\ TH\times L)$,
where $\mathsf{a}_{\T H\times \PP}$ is the composition of  
\[ \Mult_{\T H}\times \mathsf{a}_\PP\colon (\T H\times \AA)\times (\T H\times \PP)\da (\T H\times \PP)\]
with the Courant morphism $\AA\da \TH\times \AA,\ x\sim 
(\mathsf{a}_\AA(x)-\mu,\ x+\mathsf{a}_\AA^*(\mu))$ (with $\mu\in T^*H$) in the first factor. 
That is, for $x\in \AA_g$ and $w+\nu\in \T_hH,\ y\in \PP_h$, 
\begin{equation}\label{eq:bullaction1}x\circ (w+\nu,y)=\big( \mathsf{a}_\AA(x)\circ w+(-\mu)\circ \nu,\ 
(x+\mathsf{a}_\AA^*(\mu))\circ y
\big)
\end{equation}
whenever the composition on the right hand side is defined. In particular $\mu\in T^*_gH$ is the unique element such that $\nu':=(-\mu)\circ \nu$ (groupoid multiplication in $T^*H\rra \h^*$) is defined: 
thus $-\sz_{T^*H}(\mu)=\tz_{T^*H}(\nu)=\Ad_h \sz_{T^*H}(\nu)$, and in this case
$\nu'\in T^*_{gh}H$ is such that $\sz(\nu')=\sz(\nu)$. That is, $\nu'$ is the image of $\nu$ under the cotangent lift of $\mathsf{a}^L(g)$, while $-\mu$ is the image of $\nu$ under the cotangent lift of $\mathsf{a}^L(g)\circ \mathsf{a}^R(h)$. 
The condition that $(x+\mathsf{a}_\AA^*(\mu))\circ y$
be defined requires that 
\[ \sz_\AA(x)=\uz_\PP(y)-\sz_\AA(\mathsf{a}_\AA^*(\mu))
=\uz_\PP(y)-\varphi(\sz_{T^*H}(\mu))
=\uz_\PP(y)+\varphi(\tz_{T^*H}(\nu))
.\] 
This determines the moment map for the action: 
\begin{equation}\label{eq:momap1}
\uz_{\T H\times\PP}(w+\nu,y)=\uz_\PP(y)+\varphi(\tz_{T^*H}(\nu)).
\end{equation}
By construction, the $\VB$-groupoid action $\mathsf{a}_{\T H\times \PP}$ given by these formulas is a Courant morphism. 
To check that it is a Dirac action, let $(w',y')\in T_{gh}H\times L_{g.m}$ be given. Let $x\in E_g,\ y\in L_h$ be the unique elements such that 
$y'=x\circ y$, and put $w=\mathsf{a}_\AA(x)^{-1}\circ w'$. Then $(x,(w,y))\in E_g\times (TH\times L)_h$ is the unique element such that 
$x\circ (w,y)=(w',y')$.

The action of the subgroupoid $E\rra \g$ determines a $\bullet$-action 
of $H$ on $\T H\times \PP$, preserving $TH\times L$. 
By Theorem \ref{th:haction}, the quotient with respect to the 
$\bullet$-action is a Dirac structure $(\wh{\PP},\wh{L})$ over $(H\times M)/H\cong M$. As vector bundles, 
\[ \wh{\PP}=(\T H\times \PP)\big|_{\{e\}\times M}, \ \ \wh{L}=(TH\times L)\big|_{\{e\}
\times M}.\] 
Since $\T H=H\times (\h\oplus\h^*)$ by left-trivialization, this gives the decomposition \eqref{eq:pdec} as vector bundles. The description of the anchor is clear. 
The action $\T \mathsf{a}^R(h)\times \id$ on $\T H\times \PP$ is by Dirac automorphisms, and commutes with the $\bullet$-action, hence it descends to an action by Dirac automorphisms of $(\wh{\PP},\wh{L})$.

Note that the subbundle $TH\times \PP$ is invariant under the $\bullet$-action of $H$ as well as under the action $\T \mathsf{a}^R\times \id_\PP$. The diagonal $H$-action preserves 
the restriction to $\{e\}\times \PP$; it is given by 
$(w,y)\mapsto (\mathsf{a}(x)\circ w\circ h^{-1},\ h\bullet y)$. It follows that the 
induced action on $\PP$ is the $\bullet$-action. 
The map $\h\to \Gamma(\T H\times \PP),\ \tau\mapsto (\tau^L,0)$ defines generators for the $H$-action $h\mapsto \T \mathsf{a}^R(h)\times \id$ on $\T H\times \PP$. Its image under the quotient map are the constant sections of $M\times \h$, which hence are generators 
for the $H$-action on $\wh{\PP}$. 

To prove the properties of the map $f_{\wh{\PP}}$, consider the diagram
\[ \xymatrix@C=16ex{ {\T H\times \PP} \ar[r]^{(\mathsf{a}_{\T H\times \PP},\,\uz_{\T H\times \PP})}\ar[d] & {\ TH\times TM\times \g}\ar[d] \\
\wh{\PP} \ar[r]^{(\mathsf{a}_{\wh{\PP}},\ f_{\wh{\PP}})} & T M\times \dd
}\]
where the vertical maps are quotient maps under the $\bullet$-actions. This diagram commutes: 
The map $\mathsf{a}_{\T H\times \PP}\colon \T H\times \PP\to TH\times TM$, followed by 
the map $TH\times TM\to TM,\ (v,w)\mapsto v\circ w^{-1}$, descends to $\mathsf{a}_{\wh{\PP}}$, 
while the map $\uz_{\T H\times \PP}$, followed by the quotient map $TH\times \g\to \dd$, descends to $f_{\wh{\PP}}$. Proposition \ref{prop:uzp} shows that the upper horizontal map is bracket-preserving on 
$\bullet$-\emph{invariant} sections. Hence the lower horizontal map defines a bracket-preserving map on sections, that is, $f_{\wh{\PP}}$ is compatible with brackets: 
\begin{equation}\label{eq:fidentity}
f_{\wh{\PP}}(\Cour{\sigma_1,\sigma_2})=
[f_{\wh{\PP}}(\sigma_1),\ f_{\wh{\PP}}(\sigma_2)]+\L_{\mathsf{a}_{\wh{\PP}}(\sigma_1)}f_{\wh{\PP}}(\sigma_2)-\L_{\mathsf{a}_{\wh{\PP}}(\sigma_2)}f_{\wh{\PP}}(\sigma_1)
\end{equation} 
for all sections $\sigma_1,\sigma_2\in \Gamma(\wh{\PP})$. 
In particular, $f_{\wh{\PP}}\circ \mathsf{a}_{\wh{\PP}}^*=0$. 
Furthermore, since the upper horizontal map intertwines the action $\T \mathsf{a}^R\times \id_\PP$ 
with the action $T\mathsf{a}^R\times \id_{TM}\times \id_\g$, the lower horizontal map is $H$-equivariant, and in particular $f_{\wh{\PP}}$ is $H$-equivariant.

For $\mu\in \dd^*$, let $\mu=\mu'+\mu''$ be its decomposition into $\mu'\in \g^*$ and 
$\mu''\in\h^*$. The dual map to $f_{\wh{\PP}}$ at $m\in M$ satisfies 
$f_{\wh{\PP}}^*(\mu)=(\varphi^*(\mu'),\mu'',\uz_\PP^*(\mu')_m)$. 
It follows that
\[ \begin{split}
 \l f_{\wh{\PP}}^*(\mu),\ f_{\wh{\PP}}^*(\mu)\r
&=2 \l \phi^*(\mu'),\mu''\r+\l \uz_\PP^*(\mu')_m,\,\uz_\PP^*(\mu')_m\r\\
&=2 \l \phi(\mu''),\mu'\r+\gamma_\g(\mu',\mu')\\
&=\beta(\mu,\mu),
\end{split}
\]
proving $f_{\wh{\PP}}(\gamma_{\wh{\PP}})=\beta$.

\end{proof}

\begin{remark}
As remarked earlier, $\wh{L}=(M\times \h)\oplus L$ is a matched pair of Lie algebroids. 
Similarly, as we saw in the proof, $\wh{\PP}$ is the direct sum of two involutive subbundles, 
$M\times \h$ and $f_{\wh{\PP}}^{-1}(\g)=(M\times \h^*)\oplus \PP$. There is also a notion of \emph{matched pair of Courant algebroids}, due to 
Gr\"utzmann-Sti\'{e}non \cite{gru:mat}, but $\wh{\PP}$ does not fit into 
this framework, in general. 
\end{remark}

\begin{remark}
The stabilizer of the dressing action of $\dd$ at $h\in H$ is the $\beta$-coisotropic Lie subalgebra $\Ad_h(\g)$. Since $f_{\wh{\PP}}$ is compatible with brackets and satisfies 
\eqref{eq:fpgamma}, the discussion from Section \ref{subsec:pull1} shows that 
the vector bundle pullback of $\wh{\PP}$ under the projection $p\colon H\times M\to M$ is a Courant algebroid. One can show that $p^*\wh{\PP}\cong \T H\times \PP$. 
\end{remark}

The description of $\wh{\PP}$ simplifies in the special case that the Dirac-Manin triple 
$(\dd,\g,\h)_\beta$ is \emph{exact}: That is, $\beta$ is non-degenerate and $\g$ is Lagrangian. 
(As shown in \cite[Proposition 7.1]{lib:dir}, this condition is equivalent to exactness of the Courant algebroid $\AA$, i.e. 
$\ker(\mathsf{a}_\AA)=\on{ran}(\mathsf{a}_\AA^*)$ is a Lagrangian subbundle.)
\begin{proposition}
If the Dirac-Manin triple $(\dd,\g,\h)_\beta$ is exact, there is a canonical isomorphism 
\[ \wh{\PP}\cong \dd\times \PP\]
(product of Courant algebroids). Under this identification, $f_{\wh{\PP}}$ is projection to the first factor.  
\end{proposition}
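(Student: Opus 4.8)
The plan is to let $f_{\wh{\PP}}$ itself produce the splitting, taking the $\PP$-factor to be $\ker f_{\wh{\PP}}$ and the $\dd$-factor to be its orthogonal complement. First I would extract from exactness what is needed about the structure maps. Non-degeneracy of $\beta$ forces $\varphi\colon\h^*\to\g$ to be an isomorphism (the block form of $\beta$ on $\g^*\oplus\h^*$ is non-degenerate iff $\varphi$ is), but the cleaner input is that $f_{\wh{\PP}}(\gamma_{\wh{\PP}})=\beta$ from \eqref{eq:fpgamma} reads $f_{\wh{\PP}}\circ f_{\wh{\PP}}^*=\beta^\sharp$ once $\wh{\PP}\cong\wh{\PP}^*$ is identified by the metric, where $f_{\wh{\PP}}^*\colon\dd^*\to\wh{\PP}$ is the metric-adjoint. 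As $\beta^\sharp$ is invertible, $\ker f_{\wh{\PP}}$ meets $\ran(f_{\wh{\PP}}^*)=(\ker f_{\wh{\PP}})^\perp$ only in $0$; hence $\PP':=\ker f_{\wh{\PP}}$ is a non-degenerate subbundle, $\wh{\PP}=\PP'\oplus(\PP')^\perp$ orthogonally, and $f_{\wh{\PP}}$ restricts to a linear isomorphism $(\PP')^\perp\xra{\cong}\dd$.

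Next I would identify the two summands as Courant algebroids. Since $f_{\wh{\PP}}$ is compatible with brackets, \eqref{eq:fidentity} shows $f_{\wh{\PP}}$ vanishes on the Courant bracket of two sections of $\ker f_{\wh{\PP}}$; thus $\PP'$ is involutive, hence a sub-Courant algebroid, with anchor the restriction of $\mathsf{a}_{\wh{\PP}}$. The reduction description of Theorem \ref{th:lu} identifies it with $\PP$: $\PP'$ lies in the coisotropic involutive bundle $C=f_{\wh{\PP}}^{-1}(\g)=(M\times\h^*)\oplus\PP$, it is complementary to $C^\perp=M\times\h^*$, and the reduction $C/C^\perp$ recovers $\PP$, so the quotient map restricts to a Courant isomorphism $\PP'\xra{\cong}\PP$. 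For the other summand, decomposing $\gamma_{\wh{\PP}}=\gamma_{\PP'}+\gamma_{(\PP')^\perp}$ and using $f_{\wh{\PP}}(\gamma_{\PP'})=0$ gives $f_{\wh{\PP}}(\gamma_{(\PP')^\perp})=\beta$, so the isomorphism $(\PP')^\perp\xra{\cong}\dd$ is an isometry onto $(\dd,\beta)$. Writing $\sigma_\lambda\in\Gamma((\PP')^\perp)$ for the constant frame with $f_{\wh{\PP}}(\sigma_\lambda)=\lambda$, the identity $f_{\wh{\PP}}\circ\mathsf{a}_{\wh{\PP}}^*=0$ gives $\ran(\mathsf{a}_{\wh{\PP}}^*)\subseteq\PP'$, so $(\PP')^\perp\subseteq\ker\mathsf{a}_{\wh{\PP}}$ and $\mathsf{a}_{\wh{\PP}}(\sigma_\lambda)=0$; then \eqref{eq:fidentity} gives $f_{\wh{\PP}}\Cour{\sigma_\lambda,\sigma_{\lambda'}}=[\lambda,\lambda']$.

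The candidate isomorphism is then $\Psi=(f_{\wh{\PP}},\pr_{\PP'})\colon\wh{\PP}=(\PP')^\perp\oplus\PP'\to\dd\times\PP$, which by construction is an isometry with $\pr_\dd\circ\Psi=f_{\wh{\PP}}$, intertwines the anchors, and restricts to the Courant isomorphism $\PP'\cong\PP$ above. The main obstacle is to check that $\Psi$ intertwines the remaining (mixed and $\dd$-frame) Courant brackets, and the crux is the \emph{mixed} bracket: in the product $\dd\times\PP$ the bracket of a constant $\dd$-section with a $\PP$-section vanishes, so I must prove $\Cour{\sigma_\lambda,\sigma}=0$ for every $\sigma\in\Gamma(\PP')$. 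A priori \eqref{eq:fidentity} yields only $f_{\wh{\PP}}\Cour{\sigma_\lambda,\sigma}=0$, i.e.\ $\Cour{\sigma_\lambda,\sigma}\in\Gamma(\PP')$, leaving open a possibly nonzero ``infinitesimal $\dd$-action'' on $\PP'$.

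To kill this term I would show $\Cour{\sigma_\lambda,\sigma}$ is orthogonal to all of $\PP'$. For $\sigma'\in\Gamma(\PP')$, property (b) applied to the triple $\sigma,\sigma',\sigma_\lambda$ gives $\mathsf{a}(\sigma)\l\sigma',\sigma_\lambda\r=\l\Cour{\sigma,\sigma'},\sigma_\lambda\r+\l\sigma',\Cour{\sigma,\sigma_\lambda}\r$; here $\l\sigma',\sigma_\lambda\r=0$ and $\l\Cour{\sigma,\sigma'},\sigma_\lambda\r=0$ (as $\Cour{\sigma,\sigma'}\in\Gamma(\PP')$ by involutivity, while $\sigma_\lambda\in(\PP')^\perp$), so $\l\sigma',\Cour{\sigma,\sigma_\lambda}\r=0$. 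Since $\l\sigma,\sigma_\lambda\r=0$, property (c) gives $\Cour{\sigma,\sigma_\lambda}=-\Cour{\sigma_\lambda,\sigma}$, whence $\l\sigma',\Cour{\sigma_\lambda,\sigma}\r=0$ for all $\sigma'\in\Gamma(\PP')$; non-degeneracy of the metric on $\PP'$ then forces $\Cour{\sigma_\lambda,\sigma}=0$. The same orthogonality computation shows $\Cour{\sigma_\lambda,\sigma_{\lambda'}}\in\Gamma((\PP')^\perp)$, so it equals $\sigma_{[\lambda,\lambda']}$, reproducing the Lie bracket of $\dd$. With the three bracket types settled on frames, the Leibniz rule propagates the identities to arbitrary sections, the anchor- and pairing-dependent correction terms agreeing on both sides because $\Psi$ intertwines anchors and is an isometry. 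Hence $\Psi$ is an isomorphism of Courant algebroids, and under it $f_{\wh{\PP}}$ is the projection to $\dd$, as claimed.
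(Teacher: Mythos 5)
Your proof is correct and follows essentially the same route as the paper's: split $\wh{\PP}=\ker(f_{\wh{\PP}})\oplus\ker(f_{\wh{\PP}})^\perp$, identify $\ker(f_{\wh{\PP}})$ with $\PP$ via the reduction of $f_{\wh{\PP}}^{-1}(\g)$ by $M\times\h^*$, trivialize the orthogonal complement by $f_{\wh{\PP}}$, and compute the brackets of the two types of generating sections. Your explicit verification that the mixed bracket $\Cour{\sigma_\lambda,\sigma}$ actually \emph{vanishes} (using properties (b) and (c) of the Courant axioms together with non-degeneracy of the metric on $\ker(f_{\wh{\PP}})$) supplies a step the paper's proof leaves implicit, since the paper only shows this bracket lies in $\Gamma(\ker(f_{\wh{\PP}}))$ before asserting the product structure.
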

\begin{proof}
Since $\beta$ is non-degenerate and $\g$ is Lagrangian,  the map $\varphi\colon \h^*\to \g$ is an isomorphism. Consequently, the map $f_{\wh{\PP}}$ is surjective, and 
by \eqref{eq:fidentity} $\ker(f_{\wh{\PP}})$ is an
$H$-invariant  involutive subbundle, which is a complement to $f_{\wh{\PP}}^{-1}(\g)^\perp=M\times \h^*$
inside $f_{\wh{\PP}}^{-1}(\g)$. The quotient map identifies this subbundle with 
$f_{\wh{\PP}}^{-1}(\g)/ f_{\wh{\PP}}^{-1}(\g)^\perp=\PP$ as a Courant algebroid. 

The map $f_{\wh{\PP}}$ defines a trivialization $\ker(f_{\wh{\PP}})^\perp=M\times \dd$. 
For a \emph{constant} section $\sigma_1$ of $M\times \dd$, and any section $\sigma$ of $\ker(f_{\wh{\PP}})$, we obtain, using \eqref{eq:fidentity},  
\[ f_{\wh{\PP}}(\Cour{\sigma_1,\sigma})=-\L_{\mathsf{a}_\PP(\sigma)}f_{\wh{\PP}}(\sigma_1)=0\] 
hence 
$\Cour{\sigma_1,\sigma}\in \Gamma(\ker(f_{\wh{\PP}}))$. If $\sigma_1,\sigma_2$ are two \emph{constant} sections of $M\times \dd$, and $\sigma\in \Gamma(\ker(f_{\wh{\PP}}))$, 
we see that 
$\l \Cour{\sigma_1,\sigma_2},\sigma\r=-\l \sigma_2,\Cour{\sigma_1,\sigma}\r=0$. 
Hence $\Cour{\sigma_1,\sigma_2}$ is again a section of $M\times \dd$, 
and since $f_{\wh{\PP}}(\Cour{\sigma_1,\sigma_2})=[f_{\wh{\PP}}(\sigma_1),f_{\wh{\PP}}(\sigma_2)]$ this sections is again constant. 
This shows that the Courant algebroid $\wh{\PP}$ is the product of the metrized Lie algebra $\dd$ (embedded as constant sections) with $\PP$ (embedded as $\ker(f_{\wh{\PP}})$). 
\end{proof}

\begin{remark} After a calculation, one finds that 
the isomorphism from $\dd\times \PP_m$ to  $\wh{\PP}_m=(\h\oplus \h^*)\oplus \PP_m$ 
is given by 
\[  (\lambda,y)\mapsto \big(\tau+\nu-\varphi^{-1}(\uz_\PP(y)),\ y+\uz_\PP^*(\phi^{-1})^*\tau \big), 
\]
where $\tau\in\h$ and $\nu\in\h^*$ are determined by 
$\tau=\pr_\h(\lambda),\ \phi(\nu)=(1-\pr_{\h^\perp})(\lambda)$. The inclusion 
of $\wh{L}$ reads as, 
\[ \wh{L}_m\to \dd\times \PP_m,\ (\tau,z)\mapsto \big(\tau+\uz_\PP(z),\,z-\uz_\PP^*(\phi^{-1})^*\tau\big).\] 
As a special case, we recover a result of Bursztyn-Crainic-\v{S}evera  \cite{bur:qua}, 
according to which for any Poisson action $(H,\pi_H)\times (M,\pi_M)\to (M,\pi_M)$,   the Lu-Lie algebroid $\wh{L}=(M\times \h) \oplus T_\pi^* M$ may be realized as a Dirac structure in the direct product $\dd \times \T M$. 
%Note however that the theory of Bursztyn-Crainic-\v{S}evera  applies more generally to quasi-Poisson actions. 
\end{remark}

%The same argument as for $\wh{L}$, replacing Lie algebroid brackets with Courant algebroid brackets, shows: 
%\begin{lemma}\label{lem:invt}The space $\Gamma((M\times \h)\oplus \PP)^H$ is closed under the Courant algebroid bracket, and \begin{equation}\label{eq:proj2}
 %\Gamma((M\times \h)\oplus \PP)^H/\Gamma(M\times \h)^H\to \Gamma(\PP)^H \pr_\PP\colon \Gamma((M\times \h)\oplus \PP)^H \to \Gamma(\PP)^H \end{equation} (where the invariants in $\Gamma(\PP)$ are relative to the $\bullet$-action) is bracket preserving. \end{lemma}

 \subsection{The action $\mathsf{a}_\q$}\label{subsec:aq}
Put $\q:=\AA_e,\ \g:=E_e$, and let $\gamma_\q\in S^2\q$ be the element defined 
by the metric. The $\VB$-groupoid structure of $\AA$ restricts to a $\VB$-groupoid structure of its unit fiber 
\begin{equation}\label{eq:qgroupoid}
 \q\rra \g.\end{equation}
The groupoid structure is compatible with the metric, in the sense that 
$\l \lambda_1\circ \lambda_2,\lambda_1\circ \lambda_2\r=\l\lambda_1,\lambda_1\r+\l\lambda_2,\lambda_2\r$ for composable elements $\lambda_1,\lambda_2$. 
Let $\rr=\ker(\tz_\q)\subseteq \q$, so that $\rr^\perp=\ker(\sz_\q)$. 
Denote by $\pr_{\rr},\ \pr_{\rr^\perp}=1-\pr_\rr^*\in\End(\q)$ the projections to $\rr,\rr^\perp$ with kernel $\g$. Then 
\[ \sz_\q(\lambda)=(1-\pr_{\rr^\perp})(\lambda),\ \ \tz_\q(\lambda)=(1-\pr_{\rr})(\lambda),\]
and  the groupoid multiplication is uniquely determined by its compatibility with the metric: 
\[ \lambda_1\circ \lambda_2=\lambda_2+\pr_{\rr^\perp}(\lambda_1),\]
see the proof of Lemma \ref{lem:why} below. By applying Theorem \ref{th:haction} to the action $\mathsf{a}^L_\AA$ of $(H,\,\AA,\,E)$ on itself, where
\[(x,y)\sim_{\mathsf{a}^L_\AA} x\circ y,\] 
 it follows that $(\AA/H,\,E/H)=(\q,\,\g)$  is a Dirac structure over 
$H/H=\pt$. Thus, $(\q,\g)_{\gamma_\q}$ is a Manin pair.  By Proposition \ref{prop:uzp},  $\ker(\tz_\AA)/H=\ker(\tz_\AA|_e)=\rr$ 
is a Lie subalgebra of $\q$ complementary to $\g$. 
(The space $\ker(\sz_\AA|_e)=\rr^\perp$ is not a Lie subalgebra, in general.) 
The Dirac action $\mathsf{a}^R_\AA$ of $(H,\,\ol{\AA},\,E)$
on $(H,\AA,E)$,  given as 
\[ (x,y)\sim_{\mathsf{a}^R_\AA} y\circ x^{-1}\]  
commutes with $\mathsf{a}^L_\AA$, hence it descends to a Dirac action $\mathsf{a}_\q$ of $(H,\,\ol{\AA},\,E)$
on $(\pt,\q,\g)$, with moment map the source map $\sz_\q$.
The resulting $\bullet$-action of $H$ on $\q$ is the unique extension of the $\bullet$-action on $\g$
that preserves the metric and for which $\sz_\q$ (hence also $\tz_\q$) is equivariant. 
It is hence an action by automorphisms of the metrized groupoid $\q\rra \g$. 

By applying Theorem \ref{th:lu} to the Dirac action $\mathsf{a}_\q$, we obtain a Manin pair $(\wh{\q},\wh{\g})_{\gamma_{\wh{\q}}}$, with an action of $H$ by automorphisms and 
with an $H$-equivariant Lie algebra morphism $\bar{f}_{\wh{\q}}\colon \wh{\q}\to \dd$. Here the bar serves as a reminder that we are using a Dirac action of 
$(H,\ol{\AA},E)$ (with the opposite metric); by Theorem \ref{th:lu} this implies in particular that
\begin{equation}\label{eq:09}
\bar{f}_{\wh{\q}}(\gamma_{\wh{\q}})=-\beta,\end{equation}
with a minus sign. 
\begin{proposition}\label{prop:atlast}
$(\dd,\g,\h)_\beta$ is an $H$-equivariant Dirac-Manin triple, and 
\[ (\wh{\q},\,\wh{\g})_{\gamma_{\wh\q}}=(\dd\ltimes\dd^*_\beta,\,\dd)_{\wt\beta}.\]
Here $\dd^*_\beta$ is embedded as 
$\ker(\bar{f}_{\wh{\q}})=\on{ran}(\bar{f}_{\wh{\q}}^*)^\perp$. 
\end{proposition}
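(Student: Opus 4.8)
The plan is to extract everything from the single map $\bar f_{\wh\q}\colon \wh\q\to\dd$ obtained by applying Theorem \ref{th:lu} to the Dirac action $\mathsf{a}_\q$ of $(H,\ol\AA,E)$ on $(\pt,\q,\g)$; write $\bar f=\bar f_{\wh\q}$. Since the base is a point, the anchor $\mathsf{a}_{\wh\q}$ vanishes, so the compatibility with brackets \eqref{eq:fidentity} collapses to the statement that $\bar f$ is an $H$-equivariant Lie algebra morphism. Moreover it is surjective (its $\h$-component is free and $\sz_\q\colon\q\to\g$ is onto), it restricts to the identity on $\wh\g=\h\oplus\g=\dd$, and $\bar f(\gamma_{\wh\q})=-\beta$ by \eqref{eq:09}. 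From this I would first settle the Dirac-Manin triple claim: $H$-invariance of $\beta$ is immediate from $H$-equivariance of $\bar f$ together with $H$-invariance of the metric; $\ad$-invariance follows by pushing the $\ad$-invariant element $\gamma_{\wh\q}$ forward along the surjective Lie algebra morphism $\bar f$, since a morphism intertwines adjoint actions and hence $\ad_{\bar f\xi}\bar f(\gamma_{\wh\q})=\bar f(\ad_\xi\gamma_{\wh\q})=0$; and $\beta$-coisotropy of $\g$ is built into the definition of $\beta$, whose $\h^*\times\h^*$-block vanishes. This is also the step that makes $\wt\dd=\dd\ltimes\dd^*_\beta$ available, so the order matters: $\ad$-invariance of $\beta$ must be in hand before the target Manin pair can even be written down.

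For the isomorphism I would use that $\ker\bar f$ is an ideal (the kernel of a Lie morphism) and that $\wh\g$ is a complementary subalgebra, giving a semidirect decomposition $\wh\q=\wh\g\ltimes\ker\bar f$ with $\wh\g\cong\dd$; note $\ker\bar f=\on{ran}(\bar f^*)^\perp$ automatically from the metric identification $\wh\q\cong\wh\q^*$. Because $\wh\g$ is Lagrangian in $\wh\q$, the metric pairs $\ker\bar f$ perfectly with $\wh\g=\dd$, so $\theta\colon\ker\bar f\to\dd^*,\ \theta(\kappa)=\l\kappa,\cdot\r|_{\wh\g}$ is an isomorphism. I would then define $\Psi=\id_\dd\oplus\theta\colon\wh\q=\wh\g\oplus\ker\bar f\to\dd\oplus\dd^*=\wt\dd$ and verify it is an isomorphism of Manin pairs: it is a linear isomorphism sending $\wh\g$ onto $\dd$ and satisfying $\sz_{\wt\dd}\circ\Psi=\bar f$.

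The isometry of $\Psi$ reduces, after expanding both metrics in the decompositions and using that $\wh\g$ is Lagrangian, to the single identity $\l\kappa_1,\kappa_2\r=\beta(\theta\kappa_1,\theta\kappa_2)$ for $\kappa_i\in\ker\bar f$; this is exactly where the minus sign in $\bar f(\gamma_{\wh\q})=-\beta$ is consumed, since unwinding $\gamma_{\wh\q}(\bar f^*\theta\kappa_1,\bar f^*\theta\kappa_2)$ produces $-\l\kappa_1,\kappa_2\r$. For the bracket, the matching semidirect structures reduce the check to two points: that $\theta$ intertwines the adjoint action of $\wh\g\cong\dd$ on $\ker\bar f$ with the coadjoint action of $\dd$ on $\dd^*$, and that $\theta$ carries the bracket of the ideal $\ker\bar f$ to $[\cdot,\cdot]_\beta$. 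The first is a direct computation from $\ad$-invariance of the metric on $\wh\q$ (so that $\ad$ is skew-adjoint) together with $\bar f|_{\wh\g}=\id$; the second combines skew-adjointness, the isometry identity just proved, and $\ad$-invariance of $\beta$, where the final cancellation is precisely the invariance relation $\beta([\lambda,\mu_1],\mu_2)=-\beta(\mu_1,[\lambda,\mu_2])$.

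I expect this bracket-matching on the ideal $\ker\bar f$ to be the main obstacle: it is the one place where three ingredients — the Lagrangian/semidirect structure, the normalization $\bar f(\gamma_{\wh\q})=-\beta$, and $\ad$-invariance of $\beta$ — must be made to conspire, and lining up the coadjoint sign conventions with the definition of $[\cdot,\cdot]_\beta$ requires care. Once $\Psi$ is shown to be an isometric Lie algebra isomorphism carrying $\wh\g$ to $\dd$, it automatically sends $\gamma_{\wh\q}$ to $\wt\beta$ and $\ker\bar f$ to $\dd^*_\beta$, which yields the asserted identification $(\wh\q,\wh\g)_{\gamma_{\wh\q}}=(\dd\ltimes\dd^*_\beta,\dd)_{\wt\beta}$ with $\dd^*_\beta$ embedded as $\ker\bar f=\on{ran}(\bar f^*)^\perp$; $H$-equivariance of $\Psi$ is inherited from that of $\bar f$ and the metric.
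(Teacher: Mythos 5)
Your proposal is correct and follows essentially the same route as the paper's proof: both deduce $\ad$- and $H$-invariance of $\beta$ from the fact that $\bar{f}_{\wh{\q}}$ is an $H$-equivariant Lie algebra morphism restricting to the identity on $\wh{\g}=\dd$, both identify the complementary ideal $\ker(\bar{f}_{\wh{\q}})$ with $\dd^*$ via the metric pairing against the Lagrangian $\wh{\g}$, and both use $\bar{f}_{\wh{\q}}(\gamma_{\wh{\q}})=-\beta$ to conclude that the metric restricts to $+\beta$ on the kernel, yielding the semidirect product structure. The only difference is one of detail: you write out explicitly the identification $\theta$, the isometry computation, and the bracket-matching (coadjoint action and $[\cdot,\cdot]_\beta$) that the paper compresses into ``it may be identified with the dual space to $\dd$'' and ``It follows that $\ker(\bar{f}_{\wh{\q}})=\dd^*_\beta$ as a Lie algebra as well as a $\dd$-module.''
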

\begin{proof}
Since the Lie algebra morphism $\bar{f}_{\wh{\q}}\colon \wh{\q}\to \dd$ restricts to the identity map on $\wh{\g}=\dd$, it is $\ad(\dd)$-equivariant as well as $H$-equivariant. 
 From this, it is immediate that $\beta$ is  $H$-invariant as well as $\ad$-invariant. (In \cite{lib:dir}, this was proved by direct calculations.)
The kernel $\ker(\bar{f}_{\wh{\q}})$ is an $H$-invariant ideal complementary to $\dd$, hence it may be identified with the dual space to $\dd$. 
On the orthogonal space $\ker(\bar{f}_{\wh{\q}})^\perp=\on{ran}(\bar{f}_{\wh{\q}}^*)$, the metric 
restricts to $-\beta$, by definition. Hence the restriction to $\ker(\bar{f}_{\wh{\q}})$
is $+\beta$. It follows that $\ker(\bar{f}_{\wh{\q}})=\dd^*_\beta$ as a 
Lie algebra as well as a $\dd$-module, and hence 
$\wh{\q}=\dd\ltimes\dd^*_\beta$ as a metrized Lie algebra.
%The subspace $\on{ann}(\g)\subseteq \dd^*_\beta\subseteq \wh{\q}$ is 
%ust $\h^*\subseteq \wh{\q}$, and in particular is isotropic. This means that $\beta$ restricts to $0$ on $\on{ann}(\g)$. That is, $\g$ is $\beta$-coisotropic. 
\end{proof}
Let $f_{\wh{\q}}\colon \wh{\q}\to \dd$ be the projection along $\ker(\bar{f}_{\wh{\q}})^\perp$. Since the latter is an ideal, the map $f_{\wh{\q}}$ is a Lie algebra morphism, 
and by the proof of Proposition \ref{prop:atlast} we have that
\[ f_{\wh{\q}}(\gamma_{\wh{\q}})=\beta.\]
The Manin pair $(\q,\g)_{\gamma_\q}$ is recovered from 
$(\dd\ltimes\dd^*_\beta,\,\dd)_{\ti{\beta}}$ 
as the reduction 
\[ \q=\bar{f}_{\wh{\q}}^{-1}(\g)/\bar{f}_{\wh{\q}}^{-1}(\g)^\perp,\ \ 
\g=(\bar{f}_{\wh{\q}}^{-1}(\g)\cap\dd)/(\bar{f}_{\wh{\q}}^{-1}(\g)^\perp\cap\dd)
.\]
(See also Section \ref{subsec:dmp}.) The map $f_{\wh{\q}}$ vanishes on 
$\bar{f}_{\wh{\q}}^{-1}(\g)^\perp\subseteq \bar{f}_{\wh{\q}}^{-1}(0)^\perp
=\ker(f_{\wh{\q}})$, hence it descends to a Lie algebra morphism $f_\q\colon \q\to \dd$,

\begin{proposition}
Given a Dirac action of $(H,\AA,E)$ on $(M,\PP,L)$, there is a canonical Dirac comorphism 
\[ (\PP,L)\da (\q,\g),\]
defined by the property that $y\sim \zeta$ if and only if there exists $z\in L$ with 
$y=\zeta\circ z$. It is equivariant for the action of $(H,\AA,E)$. 
\end{proposition}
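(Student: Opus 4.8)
My plan is to verify, in order, that the stated relation
\[ R=\{(\zeta,y)\in\q\times\ol\PP:\ y=\zeta\circ z\ \text{for some}\ z\in L\ \text{with}\ \sz_\q(\zeta)=\uz_\PP(z)\}, \]
lying over $\on{Gr}(M\to\pt)\cong M$, is a Lagrangian subbundle, that it is integrable (hence a Courant morphism $\PP\da\q$), that it satisfies the comorphism axiom $(\ca{D}^\vee)$, and that it is equivariant. I expect the induced Lie algebroid morphism $L\to\g$ to be exactly the moment map $\uz_L$, which is already a morphism by Proposition \ref{prop:uzl}.

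\textbf{Lagrangian subbundle and the comorphism axiom.} An element of $R$ over $m$ is $(\zeta,\zeta\circ z)$ with $(\zeta,z)$ in the fibre product $C_m=\q\times_\g L_m$, a smooth bundle since $\sz_\q$ is a surjective submersion; the map $(\zeta,z)\mapsto(\zeta,\zeta\circ z)$ is injective because $\zeta$ has a groupoid inverse in $\AA_e$ (Theorem \ref{th:ingroupoid}), so that $z=\zeta^{-1}\circ(\zeta\circ z)$ is recovered. Hence $R$ is a subbundle of rank $\dim\g+\on{rank}(L)=\dim\g+\tfrac12\on{rank}(\PP)=\tfrac12\dim(\q\times\PP)$. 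For isotropy, if $y=\zeta\circ z$ and $y'=\zeta'\circ z'$ with $z,z'\in L$, then applying \eqref{eq:-1} to $\mathsf{a}_\PP$ gives the metric compatibility $\l y,y'\r=\l\zeta,\zeta'\r+\l z,z'\r$, and $\l z,z'\r=0$ since $L$ is Lagrangian; thus $\l(\zeta,y),(\zeta',y')\r=\l\zeta,\zeta'\r-\l y,y'\r=0$, so $R$ is Lagrangian. The anchor condition is vacuous because the target sits over a point. For $(\ca{D}^\vee)$, fix $y\in L_m$: if $\zeta\in\g$ then $\zeta$ is a unit, so $y=\zeta\circ z$ forces $z=y$ and $\zeta=\uz_L(y)$, while conversely $\uz_L(y)\circ y=y$; hence $\zeta=\uz_L(y)$ is the unique solution, and this is the induced morphism $L\to\g$.

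\textbf{Integrability (the main obstacle).} This is the substantial point, and I would deduce it from the integrability \eqref{eq:0} of $\mathsf{a}_\PP$, the involutivity of $L$, and the description of the bracket on $\q=\AA_e$ as the reduced Courant bracket of Section \ref{subsec:aq}. The cleanest framing is to realize $R$ as the composition of two pieces: the restriction of $\mathsf{a}_\PP$ to the unit fibre $\AA_e=\q$, which should be a Dirac action of $(\pt,\q,\g)$ on $(M,\PP,L)$, composed with the inclusion $L\hookrightarrow\PP$; integrability of $R$ would then follow from that of the restricted action together with involutivity of $L$. Since the obstruction to a pair of $R$-related sections bracketing back into $R$ is tensorial, it also suffices to check on a pointwise-spanning family. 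For generators $(\uz_L(z),z)$ with $z$ a $\bullet$-invariant section of $L$, the claim reduces directly to Proposition \ref{prop:uzp}: $\Cour{z_1,z_2}$ is again an invariant section of $L$ and the identity \eqref{eq:uzformula} for $\uz_\PP(\Cour{z_1,z_2})$ is precisely the $\q$-component of the product bracket. For the complementary generators, where $\zeta$ has a nonzero component in $\rr=\ker(\tz_\q)$, I would lift a constant $\zeta\in\q$ to a $\bullet$-invariant section $\hat\zeta\in\Gamma(\AA)$ for $\mathsf{a}^L_\AA$, so that $\Cour{\hat\zeta_1,\hat\zeta_2}_\AA|_e=[\zeta_1,\zeta_2]_\q$ by the definition of the reduced bracket, and feed these into \eqref{eq:0}. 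I expect the real work to lie exactly in these $\rr$-directions, where $\zeta\circ z$ leaves $L$: one must check that the lift is genuinely compatible with $\mathsf{a}_\PP$ (equivalently, that the restriction of $\mathsf{a}_\PP$ to $\AA_e$ is a Dirac morphism and that the above composition of Courant relations is clean), which is the crux of the argument.

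\textbf{Equivariance.} Finally I would show $y\sim_R\zeta\Rightarrow h\bullet y\sim_R h\bullet\zeta$. Writing $h\bullet y=x_t\circ y$ with $x_t\in E_h$, $\sz(x_t)=\uz_\PP(y)=\tz(\zeta)$, associativity gives $h\bullet y=(x_t\circ\zeta)\circ z$. Using the characterization of the $\bullet$-action on $\q$ from Section \ref{subsec:aq} in the form $h\bullet\zeta=x_t\circ\zeta\circ x_s^{-1}$, where $x_s\in E_h$ is determined by $\sz(x_s)=\sz(\zeta)$, and taking the witness $z'=x_s\circ z$, which lies in $L$ because $L$ is a submodule over $E$, a short computation with units and inverses gives $(h\bullet\zeta)\circ z'=x_t\circ\zeta\circ(x_s^{-1}\circ x_s)\circ z=x_t\circ\zeta\circ z=h\bullet y$. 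Equivariance for the full groupoid action follows in the same way from associativity and the equivariance \eqref{eq:momeq} of the moment map.
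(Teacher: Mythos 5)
Your pointwise verifications are correct, and in fact more detailed than what the paper records: the fibre-product description of $R$ and the rank count, the isotropy argument via \eqref{eq:-1}, the check of $(\ca{D}^\vee)$ identifying the induced map as $\uz_L$, and the equivariance computation using $h\bullet\zeta=x_t\circ\zeta\circ x_s^{-1}$ together with the fact that $L$ is an $E$-module, are all sound. The genuine gap is the one you yourself flag as the crux: the integrability of $R$ is never established, and the route you propose for it rests on a false claim. The restriction of $\mathsf{a}_\PP$ to the unit fibre $\AA_e=\q$ is \emph{not} a Dirac action of $(\pt,\q,\g)$ on $(M,\PP,L)$. Such an action would be a Courant morphism $\q\times \PP\da \PP$ over $\on{id}_M$, and the anchor condition for morphisms would force $\mathsf{a}_\PP(\zeta\circ z)=\mathsf{a}_\PP(z)$ for all composable $\zeta\in\q$, $z\in\PP$. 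But $\mathsf{a}_\PP(\zeta\circ z)=\mathsf{a}_\AA(\zeta)\circ \mathsf{a}_\PP(z)$, which differs from $\mathsf{a}_\PP(z)$ by the infinitesimal generator of $\mathsf{a}_\AA(\zeta)\in T_eH=\h$, and the anchor of $\AA$ does not vanish on the unit fibre: for the Cartan--Dirac structure one has $\mathsf{a}_\AA(e,\tau',\tau)=\tau-\tau'$. So this is not a question of checking that a composition of Courant relations is clean; the intermediate object you want to compose with simply fails to be a Courant morphism, because it is incompatible with the base map $\on{id}_M$. (Note that $R$ itself escapes this problem only because its target sits over a point, as you observe.)

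The paper's proof avoids exactly this trap: in place of the unit-fibre restriction it uses the trivialization morphism $(\AA,E)\to(\q,\g)$, i.e.\ the quotient map by the $\bullet$-action for $\mathsf{a}^L_\AA$, which \emph{is} a genuine Dirac morphism over $H\to\pt$ by Theorem \ref{th:haction} (applied as in Section \ref{subsec:aq}); the relation $R$ is then obtained by composing the Courant relations $\mathsf{a}_\PP$, this trivialization morphism, and the morphism $\PP\da 0$ determined by $L$, so that integrability is inherited from \eqref{eq:0}, and only the short $(\ca{D}^\vee)$ check remains. Your fallback of evaluating the (indeed tensorial) obstruction on a spanning family of admissible sections stalls at the same place. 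First, a small slip: relative to your family $(\uz_L(z),z)$, the complementary directions of $R_m$ lie over $\ker(\sz_\q)=\rr^\perp$, not over $\rr=\ker(\tz_\q)$, since the constraint on $(\zeta,z)$ is $\sz_\q(\zeta)=\uz_L(z)$. More seriously, to feed your $\bullet$-invariant lifts $\hat\zeta\in\Gamma(\AA)$ into \eqref{eq:0} for $\mathsf{a}_\PP$ you need sections of $\PP$ that are $\mathsf{a}_\PP$-related to pairs $(\hat\zeta,\tau)$, i.e.\ sections $\sigma$ with $\sigma_{h.m}=\hat\zeta_h\circ\tau_m$ for \emph{all} $(h,m)$; the existence of such descents along the action is precisely the compatibility you acknowledge is unverified. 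So the proposal proves the linear-algebraic and equivariance statements, but not that $R$ is a Courant relation.
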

\begin{proof}
The relation given above is obtained from the action $\mathsf{a}_\PP\colon \AA\times \PP\da \PP$
together with the morphism $\AA\to \q$ (given by trivialization) and $\PP\to 0$ (given by $L$). 
As a composition of Courant relations, it is again a Courant relation. It is a Dirac comorphism, since for 
$y\in L$, the unique element $\xi\in \g$ with $y\sim \xi$ is given by $\xi=\uz_L(y)$. 
\end{proof}

\section{Classification results for Dirac actions}
We will now use the results of the previous section to classify the Dirac actions on 
Dirac manifolds of the form $(H/K,\PP,L)$.

\subsection{The Dirac structure $(H\times (\dd\ltimes \dd^*_\beta),\,H\times \dd)$}
By definition of $(\q,\g)$ as the quotient of $(\AA,E)$ under the $\bullet$-action for 
$\mathsf{a}^L_\AA$, one obtains a  trivialization 
\begin{equation}\label{eq:trivA} (\AA,E)=(H\times \q,\,H\times\g),\end{equation}
in such a way that the constant sections of $H\times \q$ are the $\bullet$-invariant sections of $\AA$. It identifies the Dirac Lie group structure as an \emph{action  Dirac structure}, for some action of $\q$ on $H$ that will be determined below. 

The Dirac morphism $(\AA,E)\to (\q,\g)$ defined by the trivialization 
is equivariant for the Dirac action $\mathsf{a}^R_\AA$ of $(\ol{\AA},E)$ (with base action $\mathsf{a}^R_H$). Hence it extends to 
an $H$-equivariant Dirac morphism 
\[ (\wh{\AA},\,\wh{E})\to (\wh{\q},\wh{\g})
=(\dd\ltimes \dd^*_\beta,\,\dd)
.\]
Since both sides have the same rank, this morphism is given by an actual vector bundle map, 
$\wh{\AA}\to \dd\ltimes \dd^*_\beta$, which is a fiberwise isomorphism. 
This defines a trivialization 
\begin{equation}\label{eq:trivialization}
 (\wh{\AA},\,\wh{E})=(H\times (\dd\ltimes \dd^*_\beta),\ H\times \dd).
 \end{equation} 
\begin{proposition}\label{prop:ahataction}
The trivialization \eqref{eq:trivialization} identifies $(\wh{\AA},\wh{E})$ with the action Dirac structure 
for the Manin pair $(\wh{\q},\wh{\g})_{\gamma_{\wh{\q}}}$ and the action 
$\varrho\circ f_{\wh{\q}}\colon \wh{\q}\to \Gamma(TH)$. 
\end{proposition}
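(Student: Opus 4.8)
The plan is to show that the fiberwise isomorphism $F\colon\wh{\AA}\to\wh{\q}=\dd\ltimes\dd^*_\beta$ underlying \eqref{eq:trivialization} is an isomorphism of Courant algebroids from $\wh{\AA}$ onto the action Courant algebroid $H\times\wh{\q}$ for $\varrho\circ f_{\wh{\q}}$, and that it carries $\wh{E}$ to $H\times\wh{\g}$. An action Courant algebroid on a trivial bundle is determined by three pieces of data: its (constant) metric, its anchor, and the Lie bracket of its constant sections, the remaining brackets being forced by the Leibniz rule (d) together with axiom (c). So it suffices to match these three in the trivialization, plus the Lagrangian subbundle $\wh{E}$.

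Two of the three, together with the statement about $\wh{E}$, come essentially for free. Since $F$ underlies a Dirac morphism whose graph is Lagrangian in $\wh{\q}\times\ol{\wh{\AA}}$, isotropy of the graph gives $\l F(x),F(x')\r=\l x,x'\r$; hence $F$ is a fiberwise isometry and the metric in the trivialization is the constant $\gamma_{\wh{\q}}$. The same Courant morphism relates brackets through \eqref{eq:0}: writing $\sigma_i$ for the section with $F\circ\sigma_i\equiv\lambda_i$ (the constant sections of the trivialization), we get $\Cour{\sigma_1,\sigma_2}\sim_F[\lambda_1,\lambda_2]_{\wh{\q}}$, so the Courant bracket of constant sections is the constant section of the Lie bracket on $\wh{\q}$. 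Condition ($\ca{D}$) shows $F$ restricts to a fiberwise isomorphism $\wh{E}\to\wh{\g}$, giving $\wh{E}=H\times\wh{\g}$ with $\wh{\g}=\dd$ Lagrangian. Finally one checks the action has $\gamma_{\wh{\q}}$-coisotropic stabilizers, so that the action Courant algebroid exists: the stabilizer of $\varrho$ at $h$ is the $\beta$-coisotropic subalgebra $\Ad_h(\g)$, and since $f_{\wh{\q}}(\gamma_{\wh{\q}})=\beta$ the preimage $f_{\wh{\q}}^{-1}(\Ad_h\g)$ is $\gamma_{\wh{\q}}$-coisotropic (Section \ref{subsubsec:coiso}).

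The crux is the anchor: I must prove $\mathsf{a}_{\wh{\AA}}(h,\lambda)=\varrho(f_{\wh{\q}}(\lambda))_h$ in the trivialization. Both sides are $H$-equivariant for $g.(h,\lambda)=(hg^{-1},\Ad_g\lambda)$ (using $H$-equivariance of $F$ and of $f_{\wh{\q}}$, the latter because $\beta$ is $H$-invariant), and $H$ acts transitively on the base through $\mathsf{a}^R_H$; so, exactly as in Proposition \ref{prop:ehate}, it is enough to compare the two maps on the single fiber $\wh{\AA}_e\cong\wh{\q}$, where $\varrho(\zeta)_e=\pr_\h(\zeta)$ by \eqref{eq:action}. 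The subtle point is that $F_e$ is \emph{not} the naive identification of the two models of $\wh{\q}$. Here one uses that $F$ is the image under the Lu construction of the quotient morphism $(\AA,E)\to(\q,\g)$, hence intertwines the Lu-morphisms, $\bar{f}_{\wh{\q}}\circ F=\bar{f}_{\wh{\AA}}$, where $\bar{f}_{\wh{\q}}=\sz_{\wt{\dd}}$ is the source projection (cf.~\eqref{eq:09}). The map entering the anchor is instead the \emph{target} projection $f_{\wh{\q}}=\tz_{\wt{\dd}}$, which differs from $\bar{f}_{\wh{\q}}$ by $\beta^\sharp\circ\pr_{\dd^*_\beta}$; it is exactly this $\beta^\sharp$-correction that accounts for the nonzero contribution $\mathsf{a}_\AA(y)_e$ of the anchor of $\AA$ at the unit fiber, a term that vanishes in the purely Lie-algebroid case of Proposition \ref{prop:ehate} but not here.

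Carrying out this comparison at $e$ is the main obstacle and the only genuinely computational step: one must match $\mathsf{a}_{M\times\h}(\tau)_e+\mathsf{a}_\AA(y)_e$ with $\pr_\h\big(\tz_{\wt{\dd}}(F_e(\tau+\nu,y))\big)$, using the moment-map/target identity \eqref{eq:aha} and Proposition \ref{prop:thishelps} to convert the anchor contribution into the $\beta^\sharp\circ\pr_{\dd^*_\beta}$ term. Once the anchor is matched, metric, anchor, and bracket-on-constants all agree, so $F$ is an isomorphism of Courant algebroids onto the action Courant algebroid and, restricting to $\wh{E}\to H\times\wh{\g}$, an isomorphism of action Dirac structures.
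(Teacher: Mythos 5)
Your framework is sound, and the ``soft'' parts are handled correctly: an action Dirac structure on a trivial bundle is pinned down by the constant metric, the bracket on constant sections, the anchor, and the Lagrangian subbundle; your isometry argument for the graph of $F$, the use of \eqref{eq:0} on constant sections, the identification $\wh{E}\to H\times\wh{\g}$ via condition ($\ca{D}$), and the verification of $\gamma_{\wh{\q}}$-coisotropic stabilizers are all fine. The problem is that the step you yourself call ``the crux'' and ``the main obstacle'' --- proving $\mathsf{a}_{\wh{\AA}}(h,\lambda)=\varrho(f_{\wh{\q}}(\lambda))_h$ --- is never carried out. Your reduction to the fiber at $e$ by $H$-equivariance is legitimate, but at $e$ you must evaluate $\mathsf{a}_{\wh{\AA}}(\tau+\nu,y)_e=\mathsf{a}_{H\times\h}(\tau)_e+\mathsf{a}_\AA(y)_e$, and the term $\mathsf{a}_\AA(y)_e$ for general $y\in\q=\AA_e$ is precisely the unknown: nothing established before this proposition gives a formula for the anchor of $\AA$ on its unit fiber (Proposition \ref{prop:ehate} covers only $\wh{E}$, where the relevant restriction $\mathsf{a}_\AA|_{\g}$ vanishes), and the tools you invoke --- \eqref{eq:aha} and Proposition \ref{prop:thishelps} --- are statements about moment maps and metrics, not anchors. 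Saying that the $\beta^\sharp$-correction between $\sz_{\wt{\dd}}$ and $\tz_{\wt{\dd}}$ ``accounts for'' $\mathsf{a}_\AA(y)_e$ is an assertion of the desired identity, not a proof of it; indeed, once the rest of your argument is in place, that identity \emph{determines} $\mathsf{a}_\AA|_e$, so it cannot be verified without independent knowledge of the anchor. Note also that the anchor cannot be recovered from the metric and the bracket on constants (axioms (c),(d) only propagate the bracket once the anchor is known), so this datum is genuinely independent and cannot be finessed.

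The paper closes exactly this gap by a structural argument that avoids any fiber computation. By functoriality of the Lu construction, the bracket-compatible map $\bar{f}_{\wh{\AA}}\colon\wh{\AA}\to\dd$ is, in the trivialization, the fiberwise projection $\bar{f}_{\wh{\q}}=\sz_{\wt{\dd}}$ to $\dd$. Compatibility with brackets gives $\bar{f}_{\wh{\AA}}\circ\mathsf{a}_{\wh{\AA}}^*=0$ (this is \eqref{eq:rema}); dualizing through the metric yields $\mathsf{a}_{\wh{\AA}}\circ\bar{f}_{\wh{\AA}}^*=0$, i.e.\ the anchor vanishes on $\on{ran}(\bar{f}_{\wh{\AA}}^*)=\ker(\bar{f}_{\wh{\AA}})^\perp$, which in the trivialization is $H\times\ker(f_{\wh{\q}})$ (since $\ker(\sz_{\wt{\dd}})^\perp=\ker(\tz_{\wt{\dd}})$). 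Because $\wh{\AA}=\wh{E}\oplus\ker(\bar{f}_{\wh{\AA}})^\perp$, and Proposition \ref{prop:ehate} already identifies the anchor on $\wh{E}=H\times\dd$ as the dressing action (with $f_{\wh{\q}}|_{\wh{\g}}=\id_\dd$), the two summands assemble to give $\mathsf{a}_{\wh{\AA}}=\varrho\circ f_{\wh{\q}}$ everywhere --- which is exactly the identity you left unproved. If you try to complete your fiber-at-$e$ computation honestly, you will find yourself forced to reconstruct this decomposition (or to redo the direct computation of the dressing action from the Dirac Lie group axioms, as in \cite{lib:dir}); as written, your proposal reduces the proposition to an unproved identity rather than proving it.
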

\begin{proof}
The general construction gives a bracket preserving map $\bar{f}_{\wh{\AA}}\colon \wh{\AA}\to \dd$ with $\bar{f}_{\wh{\AA}}(\gamma_{\wh{\AA}})=-\beta$. 
(In terms of the trivialization, this is the projection to $\dd$.)
Both $\ker(\bar{f}_{\wh{\AA}})$ and $\ker(\bar{f}_{\wh{\AA}})^\perp=\on{ran}(\bar{f}_{\wh{\AA}}^*)$ are complements to the Lagrangian subbundle $\wh{E}\subseteq \wh{\AA}$. Since $\bar{f}_{\wh{\AA}}\circ 
\mathsf{a}_{\wh{\AA}}^*=0$, we have $\mathsf{a}_{\wh{\AA}}\circ \bar{f}_{\wh{\AA}}^*=0$. 
That is, the anchor map vanishes on the second summand of the decomposition
\[ \wh{\AA}=\wh{E}\oplus \ker(\bar{f}_{\wh{\AA}})^\perp.\]
In terms of the trivialization, $\ker(\bar{f}_{\wh{\AA}})^\perp=H\times \ker(f_{\wh{\q}})$. But we had already shown that $\wh{E}$ is an action Lie algebroid, i.e. its anchor is given by the restriction of $\varrho\circ f_{\wh{\q}}$.  
\end{proof}

Since the anchor of 
$\AA\subseteq \wh{\AA}$ is obtained by restriction, we conclude that $(\AA,E)$ 
is an action Dirac structure for the action $\varrho\circ f_\q\colon \q\to \Gamma(TH)$. 
As explained in \cite{lib:dir} the groupoid structure of $\AA=H\times \q$ 
is that of a semidirect product (see Appendix \ref{app:semidirect}) of $H$ with the groupoid $\q\rra \g$, using the 
$\bullet$-action of $H$ on $\q$. \vskip.1in 

To summarize: Given a Dirac Lie group $(H,\AA,E)$, one obtains an $H$-equivariant Dirac-Manin triple $(\dd,\g,\h)_\beta$. The Dirac-Manin pair $(\dd,\g)_\beta$ determines a Main pair 
$(\q,\g)_\gamma$ as in Section \ref{subsec:dmp}. The Dirac structure on $(\AA,E)$ on $H$ is that 
of an action Dirac structure $(H\times \q,\ H\times \g)$ for 
$\varrho\circ f_\q$, while the $\VB$-groupoid structure $\AA\rra \g$ is that of a semi-direct product 
of $H$ with \eqref{eq:qgroupoid}. This proves that up to isomorphism, the Dirac Lie group 
$(H,\AA,E)$ is uniquely determined by its Dirac-Manin triple. As shown in \cite{lib:dir}, any Dirac-Manin triple arises in this way.

\subsection{Homogeneous spaces}
We will now consider Dirac actions of $(H,\AA,E)$ on $(M,\PP,L)$, with $M=H/K$ a homogeneous space. 
Restricting to the group unit $e\in H$, the groupoid action 
of $\AA\rra \g$ on $\PP$ gives an action of $\q\rra \g$ on 
$\pp=\PP_{eK}$, with moment map $\uz_\pp\colon \pp\to \g$. This groupoid action 
is compatible with the metrics, and is uniquely determined by this property and 
the moment map  $\uz_\pp$. Indeed, letting $F_\pp\colon \q\to \pp$ be the map 
defined by $\l F_\pp(\lambda),z\r=\l \lambda,\,\uz_\pp(z)\r$, we have
\begin{lemma}\label{lem:why}
The groupoid action of $\q\rra \g$ on $\pp$ is given by $\lambda\circ z=z+F_\pp(\lambda)$. 
\end{lemma}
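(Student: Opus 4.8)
The plan is to pin down the action $\lambda\circ z$ by computing its metric pairing against every test vector $w\in\pp$, using two facts that are already in place: the groupoid action $\mathsf{a}_\pp\colon \q\times \pp\da \pp$ is a Courant relation, hence isotropic in the sense of \eqref{eq:-1}, and the units $\g\subseteq \q$ act as the identity. Since the metric on $\pp$ is non-degenerate, knowing $\l\lambda\circ z,w\r_\pp$ for all $w$ determines $\lambda\circ z$, which is precisely the uniqueness recorded just before the lemma. So the whole proof reduces to one pairing computation, with no need to separately verify the moment-map condition $\uz_\pp(\lambda\circ z)=\tz_\q(\lambda)$, as that holds automatically for the genuine action.

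First I would fix composable $\lambda\in\q$, $z\in\pp$ (so $\sz_\q(\lambda)=\uz_\pp(z)$) and an arbitrary $w\in\pp$. Setting $\xi:=\uz_\pp(w)\in\g$, the unit axiom for the groupoid action gives $\xi\circ w=w$, i.e. $(\xi,w)\sim_{\mathsf{a}_\pp} w$; and by construction $(\lambda,z)\sim_{\mathsf{a}_\pp}\lambda\circ z$. Applying \eqref{eq:-1} to these two relations, with the metric on the source being $\l\cdot,\cdot\r_\q\oplus\l\cdot,\cdot\r_\pp$, yields
\[ \l\lambda\circ z,\,w\r_\pp=\l\lambda,\xi\r_\q+\l z,w\r_\pp=\l\lambda,\uz_\pp(w)\r_\q+\l z,w\r_\pp. \]
By the defining property $\l F_\pp(\lambda),w\r_\pp=\l\lambda,\uz_\pp(w)\r_\q$, the right-hand side equals $\l z+F_\pp(\lambda),\,w\r_\pp$. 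As $w$ was arbitrary and $\l\cdot,\cdot\r_\pp$ is non-degenerate, this forces $\lambda\circ z=z+F_\pp(\lambda)$.

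I do not expect a genuine obstacle; the whole argument hinges on the choice of test vectors $w=\uz_\pp(w)\circ w$. The one point requiring care is the justification of the isotropy identity for $\mathsf{a}_\pp$: this is exactly the metric-compatibility of the restricted groupoid action asserted just before the lemma, so I would simply invoke it (being careful about the sign conventions attached to the bar in $\q\times\overline{\pp}$). As a consistency check I would specialize to $\pp=\q$ with $\uz_\pp=\tz_\q$, the case of left multiplication: then $\l F_\pp(\lambda_1),\lambda_2\r=\l\lambda_1,\tz_\q(\lambda_2)\r=\l\lambda_1,(1-\pr_\rr)(\lambda_2)\r$, whence $F_\pp(\lambda_1)=(1-\pr_\rr^*)(\lambda_1)=\pr_{\rr^\perp}(\lambda_1)$, recovering the formula $\lambda_1\circ\lambda_2=\lambda_2+\pr_{\rr^\perp}(\lambda_1)$ for the groupoid $\q\rra\g$ that was promised earlier; this also confirms that the general computation above indeed contains the structural formula as a special case.
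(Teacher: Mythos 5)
Your proof is correct and is essentially the paper's own argument: both pair the relation $(\lambda,z)\sim\lambda\circ z$ against the unit relation $\uz_\pp(w)\circ w=w$ for an arbitrary test vector $w$, use the Lagrangian/metric-compatibility identity \eqref{eq:-1} together with the defining property of $F_\pp$, and conclude by non-degeneracy of the metric. The consistency check recovering $\lambda_1\circ\lambda_2=\lambda_2+\pr_{\rr^\perp}(\lambda_1)$ is a nice addition but not part of the paper's proof.
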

\begin{proof}
For all $y\in \pp$, we have $\uz_\pp(y)\circ y=y$. Taking the inner product with 
$\lambda\circ z=z'$, we see that $\l z',y\r=\l z,y\r+\l \lambda,\,\uz_\pp(y)\r
=\l z+F_\pp(\lambda),\,y\r$, hence $z'=z+F_\pp(\lambda)$. 
\end{proof}
Use  the $\bullet$-action of $H$ on $\PP$ to write $\PP=H\times_K\pp$, and 
write $\AA=H\times \q$. 
\begin{lemma}\label{lem:act}
The groupoid action of $\AA\rra \g$ on $\PP$ is given by 
$\uz_\PP([(g,z)])=g\bullet \uz_\pp(z)$ and
\[ (h,\lambda)\circ [(g,z)]=[(hg,z+F_\pp(g^{-1}\bullet \lambda))].\]
\end{lemma}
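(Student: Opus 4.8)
The plan is to compute the groupoid action directly in the two trivializations $\AA=H\times\q$ and $\PP=H\times_K\pp$, using associativity of the groupoid multiplication to strip off the $H$-factors and then invoking Lemma~\ref{lem:why}. Throughout I would write $1_h^\xi\in E_h$ for the unique element with $\sz(1_h^\xi)=\xi$; by \eqref{eq:ths} its target is $h\bullet\xi$, and since $E\rra\g$ is the action groupoid for the $\bullet$-action on $\g$ these satisfy $1_{h_1}^{h_2\bullet\xi}\circ 1_{h_2}^\xi=1_{h_1h_2}^\xi$. Recall that the trivialization $\AA=H\times\q$ sends $(h,\lambda)$ to the invariant value $h\bullet_L\lambda:=1_h^{\tz_\q(\lambda)}\circ\lambda$, where $\bullet_L$ is the module $\bullet$-action attached to $\mathsf{a}^L_\AA$, and that $[(g,z)]$ is the class of $g\bullet_L z:=1_g^{\uz_\pp(z)}\circ z$, with $\bullet_L$ now the module action attached to $\mathsf{a}_\PP|_E$. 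The first formula $\uz_\PP([(g,z)])=g\bullet\uz_\pp(z)$ is then immediate from the equivariance \eqref{eq:momeq} of the moment map together with $\uz_\PP|_\pp=\uz_\pp$.

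For the composition formula I would compute $X:=(h\bullet_L\lambda)\circ(g\bullet_L z)$. Composability reads $\sz_\q(\lambda)=g\bullet\uz_\pp(z)$, which matches the domain on which the stated formula is asserted; assuming it, repeated use of associativity gives
\[ X=1_h^{\tz_\q\lambda}\circ\big(\lambda\circ(1_g^{\uz_\pp z}\circ z)\big)=1_h^{\tz_\q\lambda}\circ\big((\lambda\circ 1_g^{\uz_\pp z})\circ z\big). \]
Setting $\mu:=\lambda\circ 1_g^{\uz_\pp z}\in\AA_g$, the outer factor is exactly the module action, so $X=h\bullet_L(\mu\circ z)$ with $\mu\circ z\in\PP_{gK}$ and $\uz_\PP(\mu\circ z)=\tz_\q\lambda$. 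Applying $g^{-1}\bullet_L$ and associativity once more,
\[ g^{-1}\bullet_L(\mu\circ z)=\big(1_{g^{-1}}^{\tz_\q\lambda}\circ\mu\big)\circ z=\big((g^{-1}\bullet_L\lambda)\circ 1_g^{\uz_\pp z}\big)\circ z=:\tilde\mu\circ z, \]
where the key point is that $\tilde\mu=(g^{-1}\bullet_L\lambda)\circ 1_g^{\uz_\pp z}$ now lies in $\AA_e=\q$. Since the module actions are genuine $H$-actions, $X=(hg)\bullet_L(\tilde\mu\circ z)=[(hg,\tilde\mu\circ z)]$, and Lemma~\ref{lem:why} evaluates $\tilde\mu\circ z=z+F_\pp(\tilde\mu)$.

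It therefore remains to identify $\tilde\mu$ with $g^{-1}\bullet\lambda$, the automorphism action of $H$ on the groupoid $\q\rra\g$ coming from $\mathsf{a}_\q$ in Section~\ref{subsec:aq}. This is the crux of the argument and the one place where the three $\bullet$-actions --- the module actions $\bullet_L$ on $\AA$ and on $\PP$, and the automorphism action $\bullet$ on $\q$ --- must be carefully disentangled. The cleanest route is to set $\phi_g(\lambda):=(g\bullet_L\lambda)\circ 1_{g^{-1}}^{\,g\bullet\sz_\q(\lambda)}$ and verify that $\phi$ has the properties characterizing the $\bullet$-action on $\q$: it preserves the metric (because $\bullet_L$ does and $E$ is isotropic, so the second factor contributes nothing), it restricts to the given $\bullet$-action on the units $\g$, and $\sz_\q$ is $\phi$-equivariant; by the uniqueness in Section~\ref{subsec:aq} one then gets $\phi_g=g\bullet(\cdot)$. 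Equivalently, one unwinds that $\mathsf{a}_\q$ is the descent of $\mathsf{a}^R_\AA$ (given by $y\circ x^{-1}$) along the $\bullet_L$-quotient, which reproduces the same expression. Either way, $g^{-1}\bullet\lambda=(g^{-1}\bullet_L\lambda)\circ 1_g^{\,g^{-1}\bullet\sz_\q(\lambda)}$, and since $g^{-1}\bullet\sz_\q(\lambda)=\uz_\pp(z)$ under the composability hypothesis this is exactly $\tilde\mu$. I expect the main obstacle to be precisely this verification --- getting the inverse and the source-argument in the right places so that the combination of the left module action with groupoid multiplication reproduces the metric-preserving automorphism action $\bullet$ on $\q$. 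Once that identity is in hand, combining the displays yields $(h,\lambda)\circ[(g,z)]=[(hg,\,z+F_\pp(g^{-1}\bullet\lambda))]$, as claimed.
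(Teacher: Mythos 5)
Your argument is correct, but it is packaged quite differently from the paper's. The paper's own proof is three lines: it notes that $F_\pp$ is $K$-equivariant for the $\bullet$-action of $K$, hence that $k.(\lambda\circ z)=(k\bullet\lambda)\circ k.z$, and then invokes the general semidirect-product construction of Appendix \ref{app:semidirect} --- having already identified the groupoid $\AA\rra\g$ as the semidirect product $H\ltimes\q\rra\g$ in the preceding subsection (following \cite{lib:dir}) --- so that $\PP=H\times_K\pp$ is the associated-bundle module of that appendix, whose action formula together with Lemma \ref{lem:why} is exactly the claim. Your computation re-derives this module formula by hand: the associativity manipulations with the elements $1_h^\xi$, and the crux identity $\tilde\mu=(g^{-1}\bullet_L\lambda)\circ 1_g^{\uz_\pp(z)}=g^{-1}\bullet\lambda$, constitute precisely the ``straightforward verification'' that the paper delegates to the appendix plus the prior identification $\AA\cong H\ltimes\q$. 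What your route buys: it is intrinsic, with all computations performed on actual elements of $\AA$ and $\PP$, so well-definedness on the associated bundle --- the one point the paper checks explicitly, via $K$-equivariance of $F_\pp$ --- is automatic; the cost is having to disentangle the three $\bullet$-actions yourself, which you correctly identify as the crux. What the paper's route buys is brevity and reusability of the appendix lemma. One caveat on your option (a) for the crux identity: the uniqueness statement of Section \ref{subsec:aq} concerns the extension \emph{as an $H$-action}, so to apply it to $\phi_g$ for a fixed $g$ you should either check $\phi_{g_1}\circ\phi_{g_2}=\phi_{g_1g_2}$ (immediate from $1_{g_1}^{g_2\bullet\xi}\circ 1_{g_2}^\xi=1_{g_1g_2}^\xi$), or note that per-element uniqueness also holds because $\g$ is Lagrangian in $\q$: a metric-preserving map fixing $\g$ pointwise and commuting with $\sz_\q$ differs from the identity by values in $\g^\perp\cap\ker(\sz_\q)=\g\cap\ker(\sz_\q)=0$. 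Your option (b) --- unwinding $\mathsf{a}_\q$ as the descent of $\mathsf{a}^R_\AA$ along the $\bullet_L$-quotient --- sidesteps this entirely and is the cleanest way to finish, since that is literally how the $\bullet$-action on $\q$ is defined in the paper.
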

\begin{proof}
Note that $F_\pp$ is $K$-equivariant relative to the $\bullet$-action of $K\subseteq H$.
Hence, the action of $K$ on $\pp$ is compatible with the structure as a 
$\VB$-groupoid module over $\q\rra \g$: 
\[ k.(\lambda\circ z)=
(k\bullet \lambda)\circ k.z.\]
We conclude that the $\VB$-groupoid action of $\AA\rra \g$ on $\PP$ 
is that of the semidirect product $H\ltimes \q\rra \g$ on $H\times_K \pp$, as in Appendix \ref{app:semidirect}. 
\end{proof}
The following example will be important in what follows. 

\begin{example}
Similar to Proposition \ref{prop:ehate}, the multiplication morphism $\Mult_\AA$ lifts to a Dirac action of $(H,\AA,E)$ on $(H,\wh{\AA},\wh{E})$, defining in particular a $\bullet$-action of $H$ on $\wh{\AA}$ which commutes with the action of $H$ by Dirac automorphisms. In terms of 
the trivialization $\wh{\AA}=H\times \wh{\q}$, the $\bullet$-action reads as 
$h\bullet(g,z)=(hg,z)$, while the $H$ action by automorphisms is $h.(g,z)=(gh^{-1},h.z)$, with the given $H$-action on $\wh{\q}=\dd\ltimes \dd^*_\beta$. 
Proposition \ref{prop:ahataction} shows that $(\wh{\AA},\wh{E})$ is an action Dirac structure for $\varrho\circ f_{\wh{\q}}$. On the other hand, as a $\VB$-groupoid module, $\wh{\AA}$  is fully determined by the moment map 
$\uz_{\wh{\AA}}|_e=\uz_{\wh{\q}}$ at the identity fiber. The latter  is induced 
by the target map $\tz_\q=(1-\pr_\rr)=\pr_\g \circ f_\q$, hence 
\[ \uz_{\wh{\q}}=\pr_\g \circ f_{\wh{\q}}\colon \wh{\q}\to \g.\]
With $F_{\wh{\q}}\colon \q\to \wh{\q}$ defined as above (for $\pp=\wh{\q}$), Lemma \ref{lem:act} shows  
\[ (h,\lambda)\circ (g,z)=(hg,\,z+F_{\wh{\q}}(g^{-1}\bullet\lambda))\]
for composable elements $(h,\lambda)\in \AA$ and $(g,z)\in \wh{\AA}$.  
\end{example}

Returning to Dirac structures over homogeneous spaces, we have the following classification result. 
Let $\phi\colon K\to H$ denote the inclusion. 
\begin{theorem}[Classification of Dirac Lie group actions on homogeneous spaces] \label{th:class}The Dirac structures $(\PP,L)$ 
on $H/K$, together with Dirac actions of $(H,\AA,E)$, are classified by 
the following pieces of data: 

\begin{enumerate}
\item[(i)] A Harish-Chandra pair $(\nn,K)$ together with a non-degenerate $K$-invariant element $\gamma_\nn\in S^2\nn$, 
\item[(ii)] a morphism of Harish-Chandra pairs $(f_\nn,\phi)\colon (\nn,K)\to (\dd,H)$ 
such that $f_\nn(\gamma_\nn)=\beta$,
\item[(iii)] a $K$-invariant Lagrangian Lie subalgebra $\uu\subseteq \nn$. 
\end{enumerate}
%
%\begin{enumerate}\item[(i)] A Manin pair $(\nn,\uu)_{\gamma_\nn}$ with an action of $K$ by automorphisms,\item[(ii)] an embedding $\k\hra \uu$ as a $K$-invariant Lie subalgebra, making $(\nn,K)$ into a Harish-Chandra pair,   \item[(iii)] a morphism of Harish-Chandra pairs $(f_\nn,\phi)\colon (\nn,K)\to (\dd,H)$ such that $f_\nn(\gamma_\nn)=\beta$. \end{enumerate}
\end{theorem}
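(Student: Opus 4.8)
The plan is to run the argument of Proposition~\ref{prop:homla} one categorical level higher, replacing the Lu--Lie algebroid $\wh{L}$ by the Lu--Dirac structure $(\wh{\PP},\wh{L})$ of Theorem~\ref{th:lu} together with its bracket-compatible map $f_{\wh{\PP}}\colon\wh{\PP}\to\dd$. Write $p\colon H\to H/K=M$ for the projection. Since $M$ is homogeneous, the action Lie algebroid $M\times\h\subseteq\wh{L}$ already has surjective anchor, so $\wh{\PP}$ is a \emph{transitive} Courant algebroid and $\wh{L}\subseteq\wh{\PP}$ is a transitive (hence Lagrangian) Dirac structure. The correspondence will send a Dirac action to the triple of data extracted from $(\wh{\PP},\wh{L},f_{\wh{\PP}})$, and conversely reconstruct $(\PP,L)$ with its action as a reduction of an action Dirac structure over $H$.

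\emph{From an action to the data.} Given a Dirac action I apply Theorem~\ref{th:lu} and pass to the bundle of metrized Lie algebras $\ker(\mathsf{a}_{\wh{\PP}})/\on{ran}(\mathsf{a}_{\wh{\PP}}^*)$ attached to the transitive Courant algebroid $\wh{\PP}$; by $H$-equivariance it is $H\times_K\nn$ for a metrized Lie algebra $(\nn,\gamma_\nn)$ with a $K$-action, and $\gamma_\nn$ is non-degenerate and $K$-invariant. The generators $\k\subseteq\h$ map via $\k\hookrightarrow\ker(\mathsf{a}_{\wh{\PP}})_{eK}\to\nn$, making $(\nn,K)$ a Harish--Chandra pair. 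The subbundle $\ker(\mathsf{a}_{\wh{L}})=\wh{L}\cap\ker(\mathsf{a}_{\wh{\PP}})$ maps \emph{injectively} to $\nn$, because its intersection with $\on{ran}(\mathsf{a}_{\wh{\PP}}^*)$ equals $\wh{L}\cap\on{ran}(\mathsf{a}_{\wh{\PP}}^*)=(\wh{L}+\ker\mathsf{a}_{\wh{\PP}})^\perp=0$ by transitivity; its image $\uu\supseteq\k$ is isotropic (as $\wh{L}$ is Lagrangian), and the rank identity $\on{rank}\wh{\PP}=2\on{rank}\wh{L}$ gives $\dim\nn=2\dim\uu$, so $\uu\subseteq\nn$ is Lagrangian. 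Thus $(\nn,\uu)_{\gamma_\nn}$ is a $K$-equivariant Manin pair with $\k\subseteq\uu$. Finally $f_{\wh{\PP}}$ descends to $f_\nn\colon\nn\to\dd$: it annihilates $\on{ran}(\mathsf{a}_{\wh{\PP}}^*)$ since $f_{\wh{\PP}}\circ\mathsf{a}_{\wh{\PP}}^*=0$; it is a Lie algebra morphism because the Lie-derivative terms in \eqref{eq:fidentity} drop out on $\ker(\mathsf{a}_{\wh{\PP}})$; it restricts to the inclusion on $\k$ by the formula \eqref{eq:fpmap}; and $f_\nn(\gamma_\nn)=\beta$ follows from $f_{\wh{\PP}}(\gamma_{\wh{\PP}})=\beta$ together with $\mathsf{a}_{\wh{\PP}}\circ f_{\wh{\PP}}^*=0$ (so that $f_{\wh{\PP}}^*$ takes values in $\ker\mathsf{a}_{\wh{\PP}}$ and the metric descends). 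This yields the data (i)--(iii).

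\emph{From the data to an action.} Conversely, I form the action $\varrho\circ f_\nn\colon\nn\to\Gamma(TH)$, using the dressing action \eqref{eq:action}. Its stabilizer at $h$ is $f_\nn^{-1}(\Ad_h\g)$; since $\Ad_h\g$ is $\beta$-coisotropic and $f_\nn(\gamma_\nn)=\beta$, the preimage is $\gamma_\nn$-coisotropic by Section~\ref{subsubsec:coiso}, so $H\times\nn$ is an action Courant algebroid and $H\times\uu$ an action Dirac structure. The generators $\k\subseteq\uu$ are isotropic, and $K$ acts principally on $H$ by right translation, so I may set $(\PP,L):=(H\times\nn,\,H\times\uu)\qu K$; since $p^!$ is a right inverse to reduction one has $p^!(\PP,L)=(H\times\nn,\,H\times\uu)$. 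To equip $(\PP,L)$ with a Dirac action of $(H,\AA,E)$ I define the $\VB$-groupoid module structure of $\PP$ over $\AA\rra\g$ by the semidirect-product recipe of Lemmas~\ref{lem:why} and~\ref{lem:act}, with moment map induced by $\pr_\g\circ f_\nn$. The essential point is that the resulting relation $\mathsf{a}_\PP$ is a genuine \emph{Dirac} morphism; I prove this exactly as in Proposition~\ref{prop:homla}(iii), by exhibiting on $p^!\PP=H\times\nn$ a $K$-equivariant map into the universal action Dirac structure $(\wh{\AA},\wh{E})=(H\times\wh{\q},\,H\times\dd)$ of Proposition~\ref{prop:ahataction}, built from $f_\nn$ and compatible with moment maps, brackets and metrics. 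Since the Dirac action of $(H,\AA,E)$ on $(H,\wh{\AA},\wh{E})$ is already established, the action descends to the subobject $p^!\PP$ and then, by $K$-invariance, to the reduction $(\PP,L)$.

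\emph{Mutual inverseness and the main obstacle.} That the two constructions are inverse to each other I would deduce from the functoriality of $\wh{(-)}$ in Theorem~\ref{th:lu} and the fact (implicit in its construction $\wh{\PP}=(\T H\times\PP)/H$) that reduction of $(\wh{\PP},\wh{L})$ along the coisotropic subbundle $f_{\wh{\PP}}^{-1}(\g)=(M\times\h^*)\oplus\PP$ recovers $(\PP,L)$: starting from an action, the reconstructed $(\PP,L)$ produces the same $(\wh{\PP},\wh{L},f_{\wh{\PP}})$, hence is canonically identified with the original; starting from the data, extracting the quadratic reduction inverts the reduction $(H\times\nn,\,H\times\uu)\qu K$. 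The main obstacle is the verification that the reconstructed $\mathsf{a}_\PP$ satisfies the integrability and uniqueness condition $(\ca{D})$ of a Dirac morphism, rather than merely being a $\VB$-groupoid module map; this is where the coisotropy $f_\nn(\gamma_\nn)=\beta$ is indispensable, as it is precisely what makes $H\times\nn$ an action Courant algebroid and the intertwining with $\wh{\AA}$ metric-preserving. The remaining verifications ($K$-equivariance, the Harish--Chandra compatibilities, and the semidirect-product formulas) are routine given Lemmas~\ref{lem:why}--\ref{lem:act} and Propositions~\ref{prop:homla} and~\ref{prop:ahataction}.
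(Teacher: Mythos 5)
Your first two stages track the paper's proof closely and are sound: the extraction of $(\nn,K)$, $\gamma_\nn$, $\uu$ and $f_\nn$ from $\ker(\mathsf{a}_{\wh{\PP}})/\on{ran}(\mathsf{a}_{\wh{\PP}}^*)$ and $\ker(\mathsf{a}_{\wh{L}})$ is exactly the paper's construction (with your injectivity argument for $\ker(\mathsf{a}_{\wh{L}})\to\nn$ a welcome explicit detail), and the normal form $(\PP,L)=(H\times\nn,\,H\times\uu)\qu K$ with moment map induced by $\pr_\g\circ f_\nn$ is the paper's Proposition \ref{prop:A} together with the descent to $H/K$. The genuine gap is in the existence direction, at precisely the step you flag as "the essential point". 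You propose to prove that the reconstructed $\mathsf{a}_\PP$ is a Dirac morphism "exactly as in Proposition \ref{prop:homla}(iii)", by exhibiting a metric- and bracket-compatible map of $p^!\PP=H\times\nn$ into $(\wh{\AA},\wh{E})$ and letting the established action descend to this subobject. This fails twice. First, no such map into $\wh{\AA}$ alone can be built naturally from $f_\nn$: the candidate $(h,\zeta)\mapsto(h,f_\nn(\zeta))$ takes values in the Lagrangian subbundle $\wh{E}=H\times\dd$, so it pulls the metric back to zero while $\gamma_\nn$ is non-degenerate; the extra factor in the paper's map $(h,\zeta)\mapsto((h,f_\nn(\zeta)),\zeta)\in\wh{\AA}\times\nn$ is what carries the metric. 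Second, and more fundamentally, even the image of this corrected map is \emph{not} invariant under the groupoid action of $\AA\rra\g$: for composable elements one has $(g,\lambda)\circ(h,f_\nn(\zeta))=(gh,\,f_\nn(\zeta)+F_{\wh{\q}}(h^{-1}\bullet\lambda))$, and since $\ker(F_{\wh{\q}})=\g$ the correction term vanishes only for the subgroupoid $E=H\times\g$. The analogy with Proposition \ref{prop:homla}(iii) breaks down exactly here: in the $\LA^\vee$ setting only $E$ acts, so the embedded Lie subalgebroid is invariant and inherits the comorphism action; in the Dirac setting all of $\AA$ must act, and it moves the embedded copy of $H\times\nn$ out of itself.

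The mechanism your sketch is missing is coisotropic reduction rather than restriction. The paper extends the action to the product $(H,\,\wh{\AA}\times\nn,\,\wh{E}\times\uu)$ (trivial on the second factor), and uses that the \emph{invariant} map $(\ol{f}_{\wh{\AA}},f_\nn)\colon\wh{\AA}\times\nn\to\dd\times\dd$ sends $(\gamma_{\wh{\AA}},\gamma_\nn)$ to $(-\beta,\beta)$ -- the sign coming from the fact that $\mathsf{a}_\q$ is an action of $(H,\ol{\AA},E)$, cf.\ \eqref{eq:09} -- so that the preimage $C=(\ol{f}_{\wh{\AA}},f_\nn)^{-1}(\dd_\Delta)$ of the $(-\beta,\beta)$-coisotropic diagonal is a coisotropic, involutive, $\AA$-invariant subbundle. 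The Dirac action descends to the reduction $(C/C^\perp)\qu K$, and the embedding $(h,\zeta)\mapsto((h,f_\nn(\zeta)),\zeta)$, whose image lies in $C$ but is not itself invariant, induces an isomorphism $H\times\nn\cong C/C^\perp$ compatible with metrics, anchors, brackets and the $K$-action. Transported through this isomorphism, the reduced $\AA$-action reproduces the semidirect-product formula \eqref{eq:graction}: the offending term $F_{\wh{\q}}(h^{-1}\bullet\lambda)$ agrees with $f_\nn(F_\nn(h^{-1}\bullet\lambda))$ only modulo $C^\perp$, which is exactly why the reduction is indispensable. Without this step your construction does not establish that the reconstructed $\mathsf{a}_\PP$ satisfies condition ($\ca{D}$), so the existence half of the classification remains unproved; the coisotropy transfer from Section \ref{subsubsec:coiso} that you invoke makes $H\times\nn$ an action Courant algebroid, but by itself it says nothing about the groupoid action being a Dirac morphism.
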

\begin{remark}
We stress again (cf.~ \ref{subsec:jotz}) that Theorem \ref{th:class} is different from the classification of `Dirac homogeneous spaces' of Jotz \cite{jot:dir}. Theorem \ref{th:class} does not imply those results, or vice versa. 
\end{remark}

The proof of Theorem \ref{th:class}
is divided in several stages. First, we show how to associate to a given 
$(\PP,L)$ the data (i),(ii),(iii). Second, we give a normal form for $(\PP,L)$ and the action $\mathsf{a}_\PP$ in terms of these data, thus proving that $(\PP,L)$ is uniquely determined by the data. 
Third, we show that any given set of data (i),(ii),(iii) arises in this way. 

\subsubsection{Construction of the data (i),(ii),(iii)}
We assume that the Dirac structure 
$(\PP,L)$ on $H/K$ and the Dirac action $\mathsf{a}_\PP$ of $(H,\AA,E)$ are given. Let $(\wh{\PP},\wh{L})$ and $f_{\wh{\PP}}\colon \wh{\PP}\to \dd$ be as in  Theorem \ref{th:lu}. Since the anchor map for $\wh{\PP}$ is surjective, its kernel   $\ker(\mathsf{a}_{\wh{\PP}})$ is an involutive coisotropic subbundle, with 
$\ker(\mathsf{a}_{\wh{\PP}})^\perp=\on{ran}(\mathsf{a}_{\wh{\PP}}^*)$. The quotient is an $H$-equivariant Courant algebroid over $H/K$ containing $\ker(\mathsf{a}_{\wh{L}})$ as a Lagrangian subbundle. Both have the zero 
anchor map, and are hence bundles of Lie algebras. Their fibers at $eK$ are $K$-equivariant Lie algebras. 
We denote these by  $\nn,\ \uu$, so that
\[  \ker(\mathsf{a}_{\wh{\PP}})/\on{ran}(\mathsf{a}_{\wh{\PP}}^*)=H\times_K \nn,\ \ \ 
\ker(\mathsf{a}_{\wh{L}})=H\times_K \uu.
\]
Let $\gamma_\nn\in S^2\nn$ be the element defined by the metric. 
Then $(\nn,\uu)_{\gamma_\nn}$ is a Manin pair, with an action of $K$ by Manin pair automorphisms.
The inclusion of $\k\subseteq \uu$ into $\nn$ defines generators for the $K$-action; hence 
$(\nn,K)$ is a Harish-Chandra pair. 
Since $f_{\wh{\PP}}$ vanishes on $\on{ran}(\mathsf{a}_{\wh{\PP}}^*)$, it defines an $H$-equivariant map $H\times_K \nn \to \dd$. The restriction to $eK$ is a $K$-equivariant map, 
\[ f_\nn\colon \nn \to \dd.\]
Since $f_{\wh{\PP}}$ is compatible with brackets and satisfies $f_{\wh{\PP}}(\gamma_\PP)=\beta$,  the map 
$f_\nn$ is a Lie algebra morphism with $f_\nn(\gamma_\nn)=\beta$. Note that 
$f_\nn$ intertwines the inclusions of $\k$ into $\nn$ and into $\h\subseteq \dd$, hence 
it defines a morphism of Harish-Chandra pairs $(f_\nn,\phi)\colon (\nn,K)\to (\dd,H)$, 
where $\phi\colon K\to H$ is the inclusion.

\subsubsection{Construction of a normal form} \label{subsec:normal}
Consider first the case $K=\{e\}$, thus $M=H$. 
%We will use the following notation, for any metrized Lie algebra $\nn$ with a morphism $f_\nn\colon \nn\to \dd$ satisfying $f_\nn(\gamma)=\beta$. 
Let 
\[ F_\nn\colon \q\to \nn\]
be the map defined by 
\[ \l F_\nn(\lambda),\zeta\r =\l \lambda,\ \pr_\g(f_\nn(\zeta))\r,\]
for all $\lambda\in\q,\ \zeta\in\nn$. Note that $F_\nn$ vanishes on $\g\subseteq\q$, and the induced map $\g^*=\q/\g\to \n$ is the restriction of $f_\nn^*\colon \dd^*\to \nn$ to the subspace $\g^*\subseteq \dd^*$.  As before, we denote by $\varrho\colon \dd\to \Gamma(TH)$ the dressing action \eqref{eq:action}.

\begin{proposition}\label{prop:A}
Suppose $K=\{e\}$, thus $M=H$. There is a canonical identification of  
$(\PP,L)$ with the action Dirac structure $(H\times \nn,H\times \uu)$, 
where $\nn$ acts by the composition $\varrho\circ f_\nn$. The structure as a module over the $\VB$-groupoid 
$\AA=H\times \g\rra \g$ is given by the moment map 
\[ \uz_\PP(h,\zeta)=h\bullet \pr_\g (f_{\nn}(\zeta))\]
and the formula, for  composable elements $(g,\lambda)\in \AA$ and $(h,\zeta)\in \PP$, 
\[ (g,\lambda)\circ (h,\zeta)=(gh,\ \zeta+ F_{\nn}(h^{-1}\bullet \lambda)).\]
\end{proposition}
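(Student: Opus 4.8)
The plan is to run the argument in close parallel to the proof of Proposition \ref{prop:homla}(i), the $\LA^\vee$-version of this statement, while carrying along the metric and the Courant bracket in addition to the anchor. Throughout I would work with the Lu--Dirac structure $(\wh{\PP},\wh{L})$ and the bundle map $f_{\wh{\PP}}\colon \wh{\PP}\to\dd$ from Theorem \ref{th:lu}, together with the normal-form data already extracted, namely $\nn=(\ker(\mathsf{a}_{\wh{\PP}})/\on{ran}(\mathsf{a}_{\wh{\PP}}^*))_{eK}$, its Lagrangian Lie subalgebra $\uu=\ker(\mathsf{a}_{\wh{L}})_{eK}$, and the Lie algebra morphism $f_\nn\colon\nn\to\dd$. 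Since the functor $\ca{DIR}\to\LA^\vee$ forgets the ambient Courant algebroid, the Lie algebroid half of the claim ($L=H\times\uu$ as action Lie algebroid for $\varrho\circ f_\nn|_\uu$) is already supplied by Proposition \ref{prop:homla}; what remains is to promote this to the Courant level and to read off the module formulas.

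First I would produce the trivialization. Because $K=\{e\}$, the action $\mathsf{a}_{M\times\h}\colon H\times\h\to TH$ is a fibrewise isomorphism, so the isotropic generating subbundle $M\times\h$ is a complement to $\ker(\mathsf{a}_{\wh{\PP}})$ inside $\wh{\PP}$ (the ranks $\dim\h$ and $\dim\h+\on{rk}\PP$ add to $\on{rk}\wh{\PP}$, and they intersect trivially). Projecting $\PP\hookrightarrow\wh{\PP}$ along $M\times\h$ into $\ker(\mathsf{a}_{\wh{\PP}})$ and then quotienting by $\on{ran}(\mathsf{a}_{\wh{\PP}}^*)\subseteq\ker(\mathsf{a}_{\wh{\PP}})$ (the inclusion holds since $\mathsf{a}_{\wh{\PP}}\circ\mathsf{a}_{\wh{\PP}}^*=0$) gives a bundle map $\PP\to H\times_K\nn$; a direct check, using invertibility of $\mathsf{a}_{M\times\h}$ to pin down components, shows it is injective, hence a fibrewise isomorphism, and the reduced metric shows it is metric-preserving, carrying $L$ onto $H\times\uu$. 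In parallel I would use the $\bullet$-action of $H$ on $\PP$ to trivialize $\PP=H\times\pp$ as in Lemma \ref{lem:act} (with $\pp=\PP_{eK}$), identifying $\pp\cong\nn$ at the base point. I would also record that the target action Dirac structure $(H\times\nn,H\times\uu)$ is well defined: the stabilizer of $\varrho\circ f_\nn$ at $h$ is $f_\nn^{-1}(\Ad_{h^{-1}}\g)$, which is $\gamma_\nn$-coisotropic by the preimage criterion of \ref{subsubsec:coiso}, since $\Ad_{h^{-1}}\g$ is $\beta$-coisotropic and $f_\nn(\gamma_\nn)=\beta$.

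Next I would identify the transported structure with the action Courant algebroid for $\varrho\circ f_\nn$. An action Courant algebroid is determined by its metric, its anchor, and the Lie bracket on a generating frame of sections; I would take the frame consisting of the $\bullet$-invariant sections $s_\zeta$ ($\zeta\in\pp\cong\nn$), which by Theorem \ref{th:haction} are closed under the Courant bracket and, by invariance of the metric, have constant inner products equal to $\gamma_\nn$. For the anchor I would run the Courant analogue of \eqref{eq:lact}: writing $s_\zeta$ in $\wh{\PP}$ as the sum of its $\ker(\mathsf{a}_{\wh{\PP}})$-part and its $M\times\h$-part, applying $f_{\wh{\PP}}$ (formula \eqref{eq:fpmap}) and $H$-equivariance to read the $\h$-component as $-\pr_\h\Ad_h f_\nn(\zeta)$, and passing through the anchor of $M\times\h$ in left trivialization yields exactly the dressing field $\varrho(f_\nn(\zeta))_h$ of \eqref{eq:action}. \textbf{The main obstacle is the bracket identity} $\Cour{s_{\zeta_1},s_{\zeta_2}}=s_{[\zeta_1,\zeta_2]_\nn}$. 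Here the invariant sections $s_{\zeta_i}$ lift to $H$-invariant sections of $\T H\times\PP$ and project, along $M\times\h$ and modulo $\on{ran}(\mathsf{a}_{\wh{\PP}}^*)$, to invariant sections of the bundle of Lie algebras $\ker(\mathsf{a}_{\wh{\PP}})/\on{ran}(\mathsf{a}_{\wh{\PP}}^*)$, whose bracket is by construction $[\cdot,\cdot]_\nn$; the compatibility of $f_{\wh{\PP}}$ with brackets (equation \eqref{eq:fidentity}, via Proposition \ref{prop:uzp}) together with $f_{\wh{\PP}}\circ\mathsf{a}_{\wh{\PP}}^*=0$ is what controls the symmetric anomaly term of the Courant bracket and forces the projection to respect brackets of invariant sections. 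Assembling these three facts, uniqueness of the action Courant algebroid structure identifies $(\PP,L)$ with $(H\times\nn,H\times\uu)$.

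Finally I would extract the explicit module data by specializing Lemma \ref{lem:act} to $M=H$, $\pp=\PP_{eK}\cong\nn$. The anchor computation above, evaluated at $h=e$, gives $\uz_\pp=\pr_\g\circ f_\nn$ (the $h=e$ case of \eqref{eq:lmo}), whence the map $F_\pp$ of Lemma \ref{lem:why} coincides with $F_\nn$ as defined before the statement. Substituting $\uz_\pp(\zeta)=\pr_\g(f_\nn(\zeta))$ and $F_\pp=F_\nn$ into the semidirect-product formulas of Lemma \ref{lem:act}, namely $\uz_\PP([(g,z)])=g\bullet\uz_\pp(z)$ (using \eqref{eq:momeq}) and $(h,\lambda)\circ[(g,z)]=[(hg,\,z+F_\pp(g^{-1}\bullet\lambda))]$, yields precisely $\uz_\PP(h,\zeta)=h\bullet\pr_\g(f_\nn(\zeta))$ and $(g,\lambda)\circ(h,\zeta)=(gh,\,\zeta+F_\nn(h^{-1}\bullet\lambda))$, completing the proof.
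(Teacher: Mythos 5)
Your proposal follows essentially the same route as the paper's proof: the same projection of $\PP\subset\wh{\PP}$ along $M\times\h$ into $\ker(\mathsf{a}_{\wh{\PP}})$ modulo $\on{ran}(\mathsf{a}_{\wh{\PP}}^*)$ to get the metrized-bundle isomorphism $\PP\cong H\times\nn$, the same extraction of the moment map and anchor from the $\g$- and $\h$-components of $f_{\wh{\PP}}$ using its $H$-equivariance, and the same appeal to Lemmas \ref{lem:why} and \ref{lem:act} for the explicit $\VB$-groupoid formulas. The two points you flesh out beyond the paper's text --- the $\gamma_\nn$-coisotropy of the stabilizers $f_\nn^{-1}(\Ad_{h^{-1}}\g)$, and the bracket identity $\Cour{s_{\zeta_1},s_{\zeta_2}}=s_{[\zeta_1,\zeta_2]_\nn}$ on $\bullet$-invariant sections --- are details the paper leaves implicit rather than a genuinely different argument, and your sketch of them is sound.
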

\begin{proof}
We will regard $\PP$ as the second summand of $\wh{\PP}=H\times (\h\oplus \h^*)\oplus \PP$. Given
$y\in \PP_h$, let $\tau\in\h$ be the unique element 
such that  $\mathsf{a}_{H\times \h}(h,\tau)=\mathsf{a}_\PP(y)$. Write $y''=(h,\tau)$ and put
$y'=y-y''\in \wh{\PP}$. Then $\mathsf{a}_{\wh{\PP}}(y')=0$, and the  map taking $y$ to $y'$ defines an isomorphism  of metrized vector bundles
\[ \PP\cong \ker(\mathsf{a}_{\wh{\PP}})/\on{ran}(\mathsf{a}_{\wh{\PP}}^*)=H\times \nn.\]
It restricts to the isomorphism $L\cong H\times \uu$ from Section \ref{subsec:homla}. 
Let $(h,\zeta)\in H\times \nn$ be the element corresponding to $y\in\PP$ under this isomorphism. Repeating the argument for $\wh{L}$ in the proof of Proposition \ref{prop:homla}, 
we obtain
\[ \uz_\PP(y)=f_{\wh{\PP}}(y)=f_{\wh{\PP}}(y')+f_{\wh{\PP}}(y'')=\Ad_h f_{\nn}(\zeta)+\tau,\]
Projecting to the $\g$-component, it follows that 
$\uz_\PP(h,\zeta)=h\bullet \pr_\g (f_{\nn}(\zeta))$. Projecting to the $\h$-component, 
we find that 
\[ \mathsf{a}_\PP(h,\zeta)=\big(h,\Ad_{h^{-1}}\pr_\h \Ad_h f_\nn(\zeta)\big),\]
which identifies $(\PP,L)$ as an action Dirac structure for $\varrho\circ f_\nn$. 
The formula for the-$\VB$-groupoid action is determined by $\uz_\PP$, by the formulas 
from  Lemmas \ref{lem:why} and \ref{lem:act}. 
\end{proof}
For general $M=H/K$, let $p\colon H\to H/K$ be the projection. The proposition above applies to the action of $(H,\AA,E)$ on $(H,\,p^!\PP,\,p^!L)$. This action commutes with the action of $K$ (with base action $k\mapsto \mathsf{a}^R(k)$) by Dirac automorphisms, 
and $(\PP,L)=(p^!\PP\qu K,\,p^!L\qu K)$. In terms of the identification $\PP\cong H\times \nn$, the  $K$-action reads as 
\[ k.(h,\,\zeta)=(hk^{-1},\,k.\zeta).\]
We thus have 
\[ (\PP,L)=(H\times \nn\qu K,\ H\times \uu\qu K)
=(H\times_K \pp,\ H\times_K \mathfrak{l})
\]
with $\pp=\k^\perp/\k,\ \ \mathfrak{l}=\uu/\k$. The map $\uz_{p^!\PP}$ descends to the map 
\[ \uz_\PP\colon  H\times \nn\qu K\to \g,\ \ \uz_\PP([(h,\zeta)])=h\bullet \pr_\g (f_{\nn}(\zeta)),\]
and the groupoid action of $\AA=H\times \q$ descends to a well-defined groupoid action 
\begin{equation}\label{eq:graction}
 (g,\lambda)\circ [(h,\zeta)]=[(gh,\ \zeta+ F_{\nn}(h^{-1}\bullet \lambda))].\end{equation}
In summary, we see that the Dirac structure $(\PP,L)$, and the Dirac action of 
$(H,\AA,E)$ on $(H/K,\PP,L)$, are fully determined by the data (i),(ii),(iii).

\subsubsection{Construction of $(M,\PP,L)$ }
Given the data (i),(ii),(iii), we may use the formulas above to define $(M,\PP,L)$. 
That is, as a Dirac structure $(\PP,L)$ is the
reduction $(H\times \nn\qu K,\ H\times \uu\qu K)$  of the action Dirac structure for the action $\varrho\circ f_\nn$. 
As a $\VB$-groupoid module over $\AA$, it is the module $H\times_K \pp$ 
over the semi-direct product $H\ltimes \q\rra \g$, where the groupoid action of 
$\q\rra \g$ on $\pp=\k^\perp/\k$ is described by the formulas above. To prove the theorem, one has to show that the $\VB$-groupoid action is a Dirac morphism. 

Rather than proving this directly, we use the following argument. Consider the 
action of $(H,\AA,E)$ on $(H,\wh{\AA},\wh{E})$. Recall that this action preserves 
the map $\ol{f}_{\wh{\AA}}\colon \wh{\AA}\to \dd$, and that 
$\ol{f}_{\wh{\AA}}(\gamma_{\wh{\AA}})=-\beta$. Extend to an action on the direct product $(H,\wh{\AA}\times \nn,\wh{E}\times \uu)$, using the trivial action on the 
second factor. The map 
\[ (\ol{f}_{\wh{\AA}},f_\nn)\colon \wh{\AA}\times \nn \to \dd\times \dd\]
is invariant under the action, and takes $(\gamma_{\wh{\AA}},\gamma_\nn)$
to $(-\beta,\beta)$. Since the diagonal $\dd_\Delta\subseteq \dd\times\dd$ 
is a $(-\beta,\beta)$-coisotropic Lie subalgebra, its pre-image 
\[ C=(\ol{f}_{\wh{\AA}},f_\nn)^{-1}(\dd_\Delta)\subseteq \wh{\AA}\times \nn\]
is coisotropic and involutive. The diagonal action of $K$ has generators given by the diagonal embedding of $\k$. We put 
\[ \PP=(C/C^\perp)\qu K,\]
and let 
\[ L=\big(C\cap (\wh{E}\times \uu)/C^\perp\cap(\wh{E}\times \uu)\big)/K.\]
By construction, this Dirac manifold comes equipped with a Dirac action of $(H,\AA,E)$. 

To verify that these two constructions of $(\PP,L)$ coincide, recall that 
$(\wh{\AA},\wh{E})$ is the action Dirac structure 
$(H\times \wh{\q},\,H\times \wh{\g})$, for the action $\varrho\circ f_{\wh{\q}}$. 
Here $f_{\wh{\q}}\colon \wh{\q}=\dd\ltimes \dd^*_\beta\to \wh{\g}=\dd$ is simply the projection
to the first factor. One has $f_{\wh{\AA}}(h,\lambda+\mu)=\lambda$. 
The map 
\[ H\times \nn\to C\subseteq \wh{\AA}\times \nn,\ \ (h,\zeta)\mapsto ((h,f_\nn(\zeta)),\zeta)\] 
preserves metrics, the anchor map, and the brackets, and descends to an isomorphism of metrized vector bundles $H\times \n\to C/C^\perp$, compatible with the anchor maps and brackets. Furthermore, this isomorphism is also compatible with the action of $K$. 

\subsection{Robinson's classification}
We continue with the setting of Theorem \ref{th:class}. Since  $f_\nn(\gamma_\nn)=\beta$, and 
$\uu\subseteq \nn$ is Lagrangian, the image  $\cc:=f_\nn(\uu)\subseteq \dd$  is $\beta$-coisotropic; see \ref{subsubsec:coiso}. According to Proposition \ref{prop:addendum}, the group $\AA/E\cong E^*$ acts transitively on $\PP/L\cong L^*$ if and only if  $f_\nn$ restricts to an isomorphism from $\uu$ onto $\cc$, 
and $\cc\cap \h=\k$.  By the general construction from Section \ref{subsubsec:LApair}, the 
Dirac-Manin pair $(\dd,\cc)_\beta$ determines a Manin pair, given as the reduction of 
$(\dd\ltimes \dd^*_\beta,\dd)$ by the coisotropic Lie subalgebra 
$\cc\ltimes \dd^*_\beta$, with $\cc$ as a Lagrangian Lie subalgebra. 
\begin{lemma}\label{lem:rob}
There is a canonical isomorphism of Manin pairs
\[ (\nn,\uu)\cong \big((\cc\ltimes \dd^*_\beta)/(\cc\ltimes \dd^*_\beta)^\perp,\ \cc).\] 
\end{lemma}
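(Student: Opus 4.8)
The plan is to exhibit the reduction explicitly as the target of a single isometric Lie algebra isomorphism built from $f_\nn$ and the metric. Writing out the construction of Section~\ref{subsubsec:LApair} for the Dirac--Manin pair $(\dd,\cc)_\beta$, set $\q_\cc:=(\cc\ltimes\dd^*_\beta)/(\cc\ltimes\dd^*_\beta)^\perp$, with $\cc\subseteq\q_\cc$ the reduction of $\dd\subseteq\wt\dd$ and $f_{\q_\cc}\colon\q_\cc\to\dd$ induced by $\tz_{\wt\dd}$. A short computation of the $\wt\beta$-orthogonal gives $(\cc\ltimes\dd^*_\beta)^\perp=\{-\beta^\sharp(\xi)+\xi:\xi\in\on{ann}(\cc)\}$, which sits inside $\cc\ltimes\dd^*_\beta$ by $\beta$-coisotropy of $\cc$, and a dimension count yields $\dim\q_\cc=2\dim\cc=2\dim\uu=\dim\nn$. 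Since transitivity means $f_\nn|_\uu\colon\uu\to\cc$ is a Lie algebra isomorphism, let $s\colon\cc\to\uu$ be its inverse and put $T:=\gamma_\nn^\sharp\circ f_\nn^*\colon\dd^*\to\nn$, so that $f_\nn\circ T=\beta^\sharp$ (this is $f_\nn(\gamma_\nn)=\beta$ in sharp form) and $\langle T\mu,w\rangle_\nn=\langle\mu,f_\nn(w)\rangle$ for $w\in\nn$. I would then define
\[ \Xi\colon\cc\ltimes\dd^*_\beta\to\nn,\qquad \Xi(\lambda+\mu)=s(\lambda)+T(\mu),\qquad \lambda\in\cc,\ \mu\in\dd^*.\]

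First I would check that $\Xi$ is isometric. Expanding $\langle\Xi(\lambda_1+\mu_1),\Xi(\lambda_2+\mu_2)\rangle_\nn$, the term $\langle s\lambda_1,s\lambda_2\rangle_\nn$ drops out because $\uu$ is Lagrangian; the mixed terms give $\langle\lambda_1,\mu_2\rangle+\langle\lambda_2,\mu_1\rangle$ using $f_\nn\circ s=\on{id}_\cc$; and $\langle T\mu_1,T\mu_2\rangle_\nn=\langle\mu_1,f_\nn(T\mu_2)\rangle=\beta(\mu_1,\mu_2)$ using $f_\nn\circ T=\beta^\sharp$. This reproduces the metric $\langle\cdot,\cdot\rangle_{\wt\beta}$ on $\cc\ltimes\dd^*_\beta$. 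Next I would verify $T(\on{ann}(\cc))\subseteq\uu$ (since $\langle T\xi,u\rangle_\nn=\langle\xi,f_\nn u\rangle=0$ for $u\in\uu$, as $f_\nn\uu=\cc$) and deduce, using injectivity of $f_\nn|_\uu$, that $\Xi$ annihilates the radical $(\cc\ltimes\dd^*_\beta)^\perp$; thus $\Xi$ descends to $\bar\Xi\colon\q_\cc\to\nn$. Being an isometry out of a nondegenerate space, $\bar\Xi$ is injective, hence an isomorphism by the dimension count. That $\bar\Xi$ carries $\cc$ onto $\uu$ is immediate since $\bar\Xi[\lambda]=s(\lambda)$, and $f_\nn\circ\bar\Xi=f_{\q_\cc}$ follows from $f_\nn s=\on{id}$ and $f_\nn T=\beta^\sharp$.

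The hard part will be showing that $\Xi$ is a bracket morphism, which I would reduce to the single equivariance identity
\[ [\,y,\,T\mu\,]_\nn=T\big(\ad^*_{f_\nn(y)}\mu\big),\qquad y\in\nn,\ \mu\in\dd^*.\]
This follows by pairing both sides with an arbitrary $x\in\nn$ and using $\ad$-invariance of the metric together with the facts that $f_\nn$ is a bracket morphism and $\langle x,T\mu\rangle_\nn=\langle\mu,f_\nn(x)\rangle$. Granting it, the semidirect-product bracket on $\cc\ltimes\dd^*_\beta$ maps correctly term by term: the $\cc$--$\cc$ bracket goes to $[s\lambda_1,s\lambda_2]_\nn=s[\lambda_1,\lambda_2]$ because $s$ is a morphism; the two mixed brackets give $T(\ad^*_{\lambda_1}\mu_2-\ad^*_{\lambda_2}\mu_1)$ via $f_\nn(s\lambda_i)=\lambda_i$; and $[T\mu_1,T\mu_2]_\nn=T(\ad^*_{\beta^\sharp\mu_1}\mu_2)=T([\mu_1,\mu_2]_\beta)$ via $f_\nn\circ T=\beta^\sharp$, accounting for the $\dd^*_\beta$-bracket. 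I note that this argument uses only that $f_\nn$ is injective on $\uu$ (i.e.\ transitivity of the $E^*$-action), and not the additional condition $\cc\cap\h=\k$.
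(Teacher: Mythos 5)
Your proof is correct and is essentially the paper's argument run in the opposite direction: your $\bar\Xi$ is precisely the inverse of the paper's map $\nn\to(\cc\ltimes \dd^*_\beta)/(\cc\ltimes \dd^*_\beta)^\perp$, $\zeta+f_\nn^*(\mu)\mapsto[f_\nn(\zeta)+\mu]$ (the paper's $f_\nn^*$ is your $T=\gamma_\nn^\sharp\circ f_\nn^*$, with the identification $\nn^*\cong\nn$ left implicit), built from the same ingredients — the Lagrangian property of $\uu$, injectivity of $f_\nn|_\uu$, $f_\nn(\gamma_\nn)=\beta$, and a dimension count. The only differences are bookkeeping: where the paper must check well-definedness on the non-unique decomposition $\nn=\uu+\on{ran}(f_\nn^*)$, you check instead that $\Xi$ annihilates the radical $(\cc\ltimes\dd^*_\beta)^\perp$, and you carry out in full the bracket and metric verifications that the paper records as ``straightforward to verify''.
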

\begin{proof}
We have $(\on{ran}(f_\nn^*)+\uu)^\perp=\ker(f_\nn)\cap \uu^\perp
=\ker(f_\nn)\cap\uu=0$, hence $\on{ran}(f_\nn^*)+\uu=\nn$.  
It follows that any element of $\nn$ can be written in the form 
$x=\zeta+f_\nn^*(\mu)$ with $\zeta\in \uu$ and $\mu\in \dd^*$. 
Note that 
\[ f_\nn^*(\mu)\in \uu\equiv \uu^\perp \Leftrightarrow 
\mu \in \on{ann}(f_\nn(\uu))=\on{ann}(\cc)\Rightarrow f_\nn(f_\nn^*(\mu))=\beta^\sharp(\mu)\in\cc,
\]
Thus, if $\zeta\in \uu,\ \mu\in \dd^*$ with $\zeta+f_\nn^*(\mu)=0$, then 
\[ f_\nn(\zeta)+\mu=-\beta^\sharp(\mu)+\mu\in (\cc\ltimes \dd^*_\beta)^\perp.\]
We hence obtain a well-defined linear map
\[ \nn\to (\cc\ltimes \dd^*_\beta)/ (\cc\ltimes \dd^*_\beta)^\perp, \ \ 
\zeta+f_\nn^*(\mu)\mapsto [f_\nn(\zeta)+\mu].\]
It is straightforward to verify that this map preserves Lie brackets and metrics. For dimension reasons, it is hence an isomorphism. 
\end{proof}
Using Lemma \ref{lem:rob}, 
Theorem \ref{th:class} specializes to the following result, which was first obtained by P. Robinson using a different approach: 
\begin{theorem}[P. Robinson \cite{rob:cla}] 
The Dirac structures $(H/K,\PP,L)$ together with Dirac actions of $(H,\AA,E)$, such that $E^*$ acts transitively on $L^*$, are classified by $K$-invariant $\beta$-coisotropic Lie subalgebras 
$\cc\subseteq\dd$ such that $\cc\cap \h=\k$. 
\end{theorem}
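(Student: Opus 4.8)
The plan is to derive this as a specialization of the general classification Theorem~\ref{th:class}, by showing that the transitivity hypothesis forces the classifying data $\bigl((\nn,\uu)_{\gamma_\nn},\,f_\nn\bigr)$ of that theorem to be equivalent to the single datum of a $K$-invariant $\beta$-coisotropic Lie subalgebra $\cc\subseteq\dd$ with $\cc\cap\h=\k$. Thus I would not re-prove the existence or uniqueness of the Dirac action from scratch, but rather massage the output of Theorem~\ref{th:class} under the extra assumption.

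First I would set up the forward direction. Given a Dirac action as in the statement, Theorem~\ref{th:class} produces data $(\nn,\uu)_{\gamma_\nn}$ and $f_\nn\colon\nn\to\dd$, and I put $\cc:=f_\nn(\uu)$. That $\cc$ is $K$-invariant is immediate from $K$-invariance of $\uu$ and $K$-equivariance of $f_\nn$, and that $\cc$ is $\beta$-coisotropic follows from the functoriality recorded in \ref{subsubsec:coiso}: $\uu$ is Lagrangian, hence $\gamma_\nn$-coisotropic, and $f_\nn(\gamma_\nn)=\beta$, so its image is $\beta$-coisotropic. The remaining condition $\cc\cap\h=\k$ is supplied by Proposition~\ref{prop:addendum}. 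Here I would first note the identification $f_\nn|_\uu=f_\uu$ with the $\LA^\vee$-level map of Section~\ref{subsec:homla}: both are restrictions of $f_{\wh{\PP}}$ (equivalently $f_{\wh{L}}$) to $\uu=\ker(\mathsf{a}_{\wh{L}})_e\subseteq\nn$, since $f_{\wh{\PP}}$ agrees with $f_{\wh{L}}$ on $\wh{L}$. With this identification, transitivity of $E^*$ on $L^*$ is, by Proposition~\ref{prop:addendum} applied to the underlying $\LA^\vee$-action, exactly equivalent to $f_\nn|_\uu$ being injective with $f_\nn(\uu)\cap\h=\k$, i.e.\ to $f_\nn|_\uu$ being an isomorphism onto $\cc$ together with $\cc\cap\h=\k$.

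Next I would construct the inverse assignment. Given a $K$-invariant $\beta$-coisotropic Lie subalgebra $\cc\subseteq\dd$ with $\cc\cap\h=\k$, the Dirac-Manin pair $(\dd,\cc)_\beta$ feeds into the reduction construction of \ref{subsubsec:LApair} to yield a Manin pair $(\q_\cc,\cc)_{\gamma_{\q_\cc}}$, with $\q_\cc=(\cc\ltimes\dd^*_\beta)/(\cc\ltimes\dd^*_\beta)^\perp$, together with a Lie algebra morphism $f_{\q_\cc}\colon\q_\cc\to\dd$ restricting to the inclusion on $\cc$ and satisfying $f_{\q_\cc}(\gamma_{\q_\cc})=\beta$. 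Because $\cc$ is $K$-invariant and the construction is natural in $\dd$, the pair $(\q_\cc,K)$ is Harish--Chandra with $\k\subseteq\cc\subseteq\q_\cc$ as generators, and $(f_{\q_\cc},\phi)$ is a morphism of Harish--Chandra pairs. Setting $\nn=\q_\cc$ and $\uu=\cc$, this is precisely the data (i),(ii),(iii) of Theorem~\ref{th:class}, and hence determines a Dirac action; by construction it satisfies $f_\nn(\uu)=\cc$ with $\cc\cap\h=\k$, so by the forward analysis it is transitive.

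Finally I would check that the two assignments are mutually inverse. Starting from $\cc$ and reconstructing, $f_{\q_\cc}(\cc)=\cc$ recovers $\cc$ on the nose. Conversely, starting from a transitive action with data $(\nn,\uu)$, Lemma~\ref{lem:rob} provides the canonical isomorphism of Manin pairs $(\nn,\uu)\cong(\q_\cc,\cc)$ intertwining $f_\nn$ with $f_{\q_\cc}$, so the reconstructed data agrees with the original up to the isomorphisms permitted by Theorem~\ref{th:class}; together with the uniqueness clause of that theorem, the Dirac action is recovered up to isomorphism. The main obstacle is exactly the isomorphism $(\nn,\uu)\cong(\q_\cc,\cc)$ --- but this is the content of Lemma~\ref{lem:rob}, which is already established. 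Once it is in hand, the remainder is the bookkeeping above, the one delicate point being to match the Dirac-level map $f_\nn|_\uu$ with the $\LA^\vee$-level map $f_\uu$ so that Proposition~\ref{prop:addendum} applies verbatim.
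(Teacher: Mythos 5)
Your proposal is correct and follows essentially the same route as the paper: specialize Theorem~\ref{th:class} by using Proposition~\ref{prop:addendum} to translate the transitivity hypothesis into ``$f_\nn|_\uu$ is an isomorphism onto $\cc:=f_\nn(\uu)$ with $\cc\cap\h=\k$,'' and then use the reduction construction of \ref{subsubsec:LApair} together with Lemma~\ref{lem:rob} to see that the data $(\nn,\uu,f_\nn)$ is canonically recovered from $\cc$ alone. Your added bookkeeping (the identification $f_\nn|_\uu=f_\uu$, and the check that the isomorphism of Lemma~\ref{lem:rob} intertwines $f_\nn$ with $f_{\q_\cc}$) is exactly what the paper leaves implicit, and it holds.
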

Note that for transitive Poisson actions of Poisson Lie groups $H$ on Poisson manifolds $M$, the condition 
in the theorem is automatic: Here the action of $E^*=TH$ on $L^*=TM$ is just the tangent lift of the 
$H$-action on $M$. 

\subsection{The exact case}
Recall that a  Courant algebroid over a manifold $Q$ is \emph{exact} \cite{sev:let} if the kernel of its anchor map is a Lagrangian subbundle. In this case, the choice of a complementary Lagrangian subbundle identifies the Courant algebroid with the standard Courant algebroid $\T Q$, with the Courant bracket 
twisted by a closed 3-form. 

As we already remarked, \cite[Proposition 7.1]{lib:dir} asserts that the Courant algebroid $\AA$ of a Dirac Lie group $(H,\AA,E)$ is exact if and only if its Dirac-Manin-triple $(\dd,\g,\h)_\beta$  is exact, 
that is, $\beta$ is non-degenerate and $\g$ is Lagrangian. 
As discussed in \cite{lib:dir}, there is in fact a \emph{canonical} splitting $TH\to H\times \dd$, 
given on left-invariant vector fields by 
\[ \nu^L\mapsto (h,\nu-\hh \Ad_{h^{-1}}(1-\pr_{\h^\perp}) \Ad_h \nu).
\]
The corresponding 3-form in this case is expressed in terms of the Maurer-Cartan forms $\theta^L,\theta^R$ as 
\[ \eta=\f{1}{12}\l\theta^L,[\theta^L,\theta^L]\r\]
(where $\l\cdot,\cdot\r$ is the inner product on $\dd$ defined by $\beta$). Thus $\AA\cong \T H_\eta$, in the notation from \cite{lib:dir}. 
\begin{proposition}
Let $(H,\AA,E)$ be an exact Dirac Lie group, classified by $(\dd,\g,\h)_\beta$, and $K\subset H$ a closed subgroup. Then the  \emph{exact} Dirac structures $(\PP,L)$ on $H/K$, together with Dirac actions of $(H,\AA,E)$, are classified by Lagrangian Lie subalgebras $\cc\subseteq \dd$. 
\end{proposition}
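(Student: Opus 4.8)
The plan is to deduce the statement from the general classification in Theorem~\ref{th:class} by determining which of the data (i)--(iii) there correspond to \emph{exact} $\PP$. Since $(H,\AA,E)$ is exact, its Dirac--Manin triple $(\dd,\g,\h)_\beta$ is exact, so $\beta$ is non-degenerate and $\g$ is Lagrangian; in particular $\dim\dd=2\dim\h$. Recall that such data consist of a Harish--Chandra pair $(\nn,K)$ with non-degenerate $K$-invariant $\gamma_\nn$, a Lie algebra morphism $f_\nn\colon\nn\to\dd$ with $f_\nn(\gamma_\nn)=\beta$ and $f_\nn|_\k$ the inclusion, and a $K$-invariant Lagrangian Lie subalgebra $\k\subseteq\uu\subseteq\nn$. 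The first goal is to show that $\PP$ is exact precisely when $f_\nn$ is an isomorphism, and the second is to check that $\cc:=f_\nn(\uu)$ then sets up the desired bijection with Lagrangian Lie subalgebras of $\dd$.

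For the first goal I would exploit the splitting established above for exact Dirac--Manin triples, namely the isomorphism of Courant algebroids $\wh\PP\cong\dd\times\PP$ under which $f_{\wh\PP}$ is the projection to $\dd$ and the anchor is $\mathsf{a}_{\wh\PP}(\lambda,y)=\mathsf{a}_\PP(y)$ (the factor $\dd$ carrying zero anchor). Hence $\ker(\mathsf{a}_{\wh\PP})=\dd\times\ker(\mathsf{a}_\PP)$ and $\on{ran}(\mathsf{a}_{\wh\PP}^*)=0\times\on{ran}(\mathsf{a}_\PP^*)$, so by the very definition of $\nn$ as $\ker(\mathsf{a}_{\wh\PP})/\on{ran}(\mathsf{a}_{\wh\PP}^*)$ over $eK$ we obtain a metric- and bracket-respecting splitting $\nn\cong\dd\oplus N$ with $N=\bigl(\ker(\mathsf{a}_\PP)/\on{ran}(\mathsf{a}_\PP^*)\bigr)_{eK}$, under which $f_\nn$ is the projection to $\dd$. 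Since $\on{ran}(\mathsf{a}_\PP^*)=\ker(\mathsf{a}_\PP)^\perp$ always holds, $\PP$ is exact exactly when $\ker(\mathsf{a}_\PP)$ is Lagrangian, i.e.\ when $N=0$; and $N=0$ is equivalent to $f_\nn$ being an isomorphism. This is the crux of the argument.

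It then remains to transport structure. When $f_\nn$ is an isomorphism, the identity $f_\nn(\gamma_\nn)=\beta$ with both elements non-degenerate forces $f_\nn$ to be an isometry, hence a $K$-equivariant isomorphism of metrized Lie algebras $\nn\xrightarrow{\sim}\dd$. Setting $\cc:=f_\nn(\uu)$ therefore produces a Lagrangian Lie subalgebra of $\dd$; it is $K$-invariant (as $\uu$ is and $f_\nn$ is $K$-equivariant) and contains $\k=f_\nn(\k)$ since $\k\subseteq\uu$. Conversely, given such a $\cc$ I would take the normal form $\nn=\dd$, $\gamma_\nn=\beta$, $f_\nn=\id$, $\uu=\cc$, which satisfies (i)--(iii) and, by the criterion above, yields an exact $\PP$; these two passages are mutually inverse up to isomorphism of data. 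Combined with Theorem~\ref{th:class}, this gives the asserted classification by Lagrangian Lie subalgebras $\cc\subseteq\dd$.

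The main obstacle is the crux computation of $\nn$ and $f_\nn$ in the second paragraph: one must check that the abstract reduction defining $\nn$ really decomposes compatibly with the product $\wh\PP\cong\dd\times\PP$, so that $f_\nn$ is visibly the projection and exactness of $\PP$ is read off as the vanishing of the complementary summand $N$. The remaining points---that $f_\nn$ being an isomorphism implies it is an isometry, and the reconstruction from $\cc$---are routine transport of structure using Theorem~\ref{th:class}.
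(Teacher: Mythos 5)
Your proof is correct, but its key step is genuinely different from the paper's. The paper proves necessity by a rank count: from the normal form, $\on{rank}(\PP)=\dim\nn-2\dim\k$, so exactness of $\PP$ forces $\dim\nn=2\dim H=\dim\dd$, and then $f_\nn(\gamma_\nn)=\beta$ with $\beta$ non-degenerate makes $f_\nn$ surjective, hence an isomorphism; sufficiency is then a separate argument, namely that for $\nn=\dd$ the pullback $p^!\PP$ under $p\colon H\to H/K$ equals $H\times\dd=\AA$ as a Courant algebroid, so $\PP=(p^!\PP)\qu K$ is a reduction of an exact Courant algebroid and hence exact, quoting \cite{bur:red}. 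You instead use the product decomposition $\wh\PP\cong\dd\times\PP$ of the preceding proposition (available precisely under the standing exactness hypothesis on $(\dd,\g,\h)_\beta$) to compute $\nn\cong\dd\oplus N$ with $N=\bigl(\ker(\mathsf{a}_\PP)/\on{ran}(\mathsf{a}_\PP^*)\bigr)_{eK}$ and $f_\nn$ the projection to $\dd$. The compatibility check you flag as the crux does go through: $\ker(f_{\wh\PP})$ and the trivialization of its orthogonal are $H$-invariant, and $\ker(\mathsf{a}_{\wh\PP})$ has constant rank (the anchor of $\wh\PP$ is surjective since $\wh L$ is transitive), so $\ker(\mathsf{a}_\PP)$ and $\on{ran}(\mathsf{a}_\PP^*)$ are genuine subbundles of constant rank and vanishing of $N$ at $eK$ is equivalent to $\ker(\mathsf{a}_\PP)=\on{ran}(\mathsf{a}_\PP^*)$ everywhere. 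This yields the equivalence (exact $\Leftrightarrow$ $f_\nn$ isomorphism) in both directions at once, is self-contained (no appeal to the external fact that reduction preserves exactness), exhibits $N$ as the precise obstruction to exactness, and works with the paper's literal definition of exactness (Lagrangian kernel of the anchor), whereas the count $\on{rank}(\PP)=2\dim(H/K)$ tacitly also uses surjectivity of the anchor. What the paper's route buys is brevity: a one-line dimension count plus the citation of \cite{bur:red} replace all of your structural bookkeeping. From the isomorphism $f_\nn$ onward the two arguments agree (transport of $\uu$ to $\cc=f_\nn(\uu)$ and back via the normal form $(\dd,\beta,\id,\cc)$), with your version making explicit the conditions on $\cc$ ($K$-invariance and $\k\subseteq\cc$) that are left implicit in the statement.
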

\begin{proof}
Let $(\nn,\uu)_{\gamma_\nn}$ and $f_\nn\colon \nn\to \dd$ be the data from the general classification theorem \ref{th:class}, and $\cc=f_\nn(\uu)$. Note that 
$\on{rank}(\PP)=\dim\nn-2\dim\k$. Hence, if $\PP$ is exact as well, we must have 
$\dim\nn=2\dim H=\dim\dd$. Since $f_\nn(\gamma_\nn)=\beta$, this implies that $f_\nn$ 
is an isomorphism from $\nn$ to $\dd$, restricting to an isomorphism from $\uu$ to $\cc$. 
But these conditions are also sufficient: if $\nn=\dd$, then its pullback $p^!\PP$  under the map $p\colon H\to H/K$ equals $H\times \dd=\AA$ as a Courant algebroid. Hence $\PP=(p^!\PP)\qu K$, being the reduction of an exact Courant algebroid, is again exact \cite{bur:red}. 
\end{proof}
We hence see that the Courant algebroid $\PP$ is  $\AA\qu K=\T H_\eta\qu K$, where the reduction is defined using the generators 
\[ \tau\mapsto \tau^L+\hh \l\theta^L,\Ad_{h^{-1}}(1-\pr_{\h^\perp})\Ad_h \tau\r.\]
The reduction is isomorphic to $\T(H/K)$ with the Courant bracket twisted by a closed 3-form; the 
latter  depend on the choice of a splitting. In the special case $\k\subseteq \h^\perp$ the splitting of $\AA$ descends to a splitting of $\PP$, and the 3-form $\eta$  descends to $H/K$.  In general, 
the splitting of $\AA$ does not directly descend, but the choice of a $K$-principal connection on 
$H\to H/K$ determines a reduced splitting for $\PP$ \cite{bur:red}. 

\begin{example}
Suppose $\h$ has an invariant metric, and let $H$ be equipped with the corresponding Cartan-Dirac structure. Recall that the associated $H$-equivariant Dirac Manin triple is 
$(\h\oplus\ol{\h},\h_\Delta,\h\oplus 0)_\beta$. This is an exact Dirac-Manin triple; 
indeed $\AA\cong \T H_\eta$ where $\eta\in \Omega^3(H)$ is the Cartan 3-form determined by the metric. The exact Dirac homogeneous spaces $H/K$ for this Dirac Lie group are given by Lagrangian Lie subalgebra $\mf{c}$ such that $(\h\oplus 0)\cap \mf{c}=\k\oplus 0$. Note that this requires $\k\subset \h$ to be isotropic. If the metric on $\h$ is positive definite, then $\mf{c}$ is obtained as the graph of a Lie algebra automorphism $\kappa$ of $\h$, and one has $\k=0$. 
(Note however that the Dirac structure on $H/K=H$ is a $\kappa$-twisted 
version of the Cartan Dirac structure.) If $H$ is a non-compact real semi-simple Lie group, there are many other examples of Lagrangian Lie subalgebras, with a possibly non-trivial $K$. See Karolinsky-Lyapina  \cite{kar:lag} for some classification results.  Stronger results are available in the complex case, as discussed in Example \ref{subsubsec:wonderful}. 
\end{example}

\begin{appendix}
\section{$\VB$-groupoids}\label{app:vb}
%
%\subsection{}
For any groupoid $H$, we denote by $H^{(0)}$ its space of units, 
and by $\sz,\tz\colon H\to H^{(0)}$ the source and target maps. Sometimes we write 
$\sz_H,\tz_H$ for clarity. The groupoid itself will be written as $H\rra H^{(0)}$, and the groupoid multiplication 
of elements $h_1,h_2$ with $\sz(h_1)=\tz(h_2)$ is written as $h_1\circ h_2$ or simply $h_1h_2$. Throughout, we will take `groupoids' to mean \emph{Lie} groupoids; thus $H$ and $H^{(0)}$ are 
smooth manifolds, all structure maps are smooth, and the source and target maps are surjective submersions.  
We denote by $\on{Gr}(\Mult_H)\subseteq H\times H\times H$ the graph of the groupoid multiplication, consisting of elements 
$(h_1\circ h_2,h_1,h_2)$ with $\sz(h_1)=\tz(h_2)$. 
An \emph{action} of a groupoid $H\rra H^{(0)}$ on a manifold $M$ is given by a smooth map $\uz\colon M\to H^{(0)}$ (called the \emph{moment map}), together with a smooth map $\ca{A}_M\colon (h,m)\mapsto h\circ m$, defined on the submanifold of elements $(h,m)\in H\times M$ such that $\sz(h)=\uz(m)$, such that 
$h_1\circ (h_2\circ m)=(h_1\circ h_2)\circ m$ whenever $\sz(h_1)=\tz(h_2),\ \sz(h_2)=\uz(m)$, and such that  
$h\circ m=m$ whenever $h=\uz(m)\in H^{(0)}$.  We denote by $\on{Gr}(\ca{A}_M)\subseteq M\times H\times M$ the graph of the action map, 
consisting of all $(m',h,m)$ such that $m'=h\circ m$. 
%

%\subsection{}
A \emph{$\VB$-groupoid} \cite{pra:rem} (see also \cite{bur:vec,gra:vb,lib:cou,mac:gen}) is a groupoid $V\rra V^{(0)}$, such that $V$ is a vector bundle (whose base is a groupoid $H\rra H^{(0)}$), 
and $\on{Gr}(\Mult_V)$ is a vector subbundle along $\on{Gr}(\Mult_H)$. The condition on the graph implies that the 
units $V^{(0)}$ are a sub-vector bundle, and that all the groupoid structure maps (multiplication, inversion, and source and target maps) 
are vector bundle morphisms. The \emph{core} of a $\VB$-vector bundle is the vector bundle 
\[ \on{core}(V)=V|_{H^{(0)}}/V^{(0)};\]
the $\VB$-vector bundle is called \emph{vacant} if $\on{core}(V)=0$. The restrictions of $\ker(\sz_V)$ and $\ker(\tz_V)$ to 
$H^{(0)}$ are both complements to $TH^{(0)}$, hence they are both identified with $\on{core}(V)$.

Left translation by elements of $H\subseteq V$ gives a canonical isomorphism $\sz_H^*\on{core}(V)\cong \ker(\tz_V)$; similarly $\tz_H^*\on{core}(V)\cong \ker(\sz_V)$ by right translation.
Composing with the inclusion maps into $V$, and dualizing, we obtain two bundle maps  
\begin{equation}\label{eq:tsv} \sz_{V^*}\colon V^*\to \on{core}(V)^*,\ \ \tz_{V^*}\colon V^*\to \on{core}(V)^*
\end{equation}
with base maps $\sz_H,\ \tz_H$.
These are the source and target maps of a groupoid:
\begin{proposition}[Pradines \cite{pra:rem}] \label{prop:pradines}
For any $\VB$-groupoid $V\rra V^{(0)}$, the dual bundle 
is again a $\VB$-groupoid $V^*\rra (V^*)^{(0)}$, with source and target map 
\eqref{eq:tsv}. The groupoid structure satisfies 
$\l\mu_1\circ \mu_2,v_1\circ v_2\r=\l\mu_1,v_1\r+\l \mu_2,v_2\r$ for elements $\mu_i \in V^*_{h_i}$ and 
$v_i\in V_{h_i}$ such that the compositions are defined. 
\end{proposition}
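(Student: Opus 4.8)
The plan is to define the groupoid multiplication on $V^*$ directly through the pairing identity in the statement, and then read off all the groupoid axioms from those of $V$. Write $\Sigma=\Gr(\Mult_V)\subseteq V\times V\times V$ for the vector subbundle along $\Gr(\Mult_H)$ whose fibre over $(h_1h_2,h_1,h_2)$ consists of the triples $(v_1\circ v_2,v_1,v_2)$. Using the fibrewise duality pairing between $V^*\times V^*\times V^*$ and $V\times V\times V$, I would \emph{define} $\Gr(\Mult_{V^*})$ to be the set of triples $(\mu,\mu_1,\mu_2)$ with $(\mu,-\mu_1,-\mu_2)\in\on{ann}(\Sigma)$; unravelled, this is exactly the requirement $\l\mu,v_1\circ v_2\r=\l\mu_1,v_1\r+\l\mu_2,v_2\r$ for all $(v_1\circ v_2,v_1,v_2)\in\Sigma$. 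Since $\Sigma$ has constant rank, so does $\on{ann}(\Sigma)$, and a dimension count (using $\on{rank}\Sigma=2\,\on{rank}\on{core}(V)+\on{rank}V^{(0)}$) shows that $\Gr(\Mult_{V^*})$ is a subbundle along $\Gr(\Mult_H)$ of the rank expected of the multiplication graph of a $\VB$-groupoid with units $\on{core}(V)^*$ and core $(V^{(0)})^*$. This simultaneously gives the $\VB$-compatibility, since $\Gr(\Mult_{V^*})$ is by construction a vector subbundle.

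First I would record the two elementary facts about $\Sigma$ that drive everything: for fixed composable $(h_1,h_2)$ the multiplication map $M\colon (v_1,v_2)\mapsto v_1\circ v_2$ on composable pairs is \emph{surjective} onto $V_{h_1h_2}$ (one solves $v=v_1\circ v_2$ by $v_1=v\circ v_2^{-1}$ for any $v_2$ with $\sz_V(v_2)=\sz_V(v)$), and its kernel has rank $\on{rank}\on{core}(V)$. Surjectivity gives \emph{uniqueness} of the product on $V^*$: if $(\mu,\mu_1,\mu_2)$ and $(\mu',\mu_1,\mu_2)$ both satisfy the pairing identity then $\mu-\mu'$ annihilates the image of $M$, which is all of $V_{h_1h_2}$, so $\mu=\mu'$. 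Existence of $\mu$ for a given pair $(\mu_1,\mu_2)$ amounts to the well-definedness of $v_1\circ v_2\mapsto \l\mu_1,v_1\r+\l\mu_2,v_2\r$, i.e.\ to the functional $(v_1,v_2)\mapsto\l\mu_1,v_1\r+\l\mu_2,v_2\r$ vanishing on $\ker M$.

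The main obstacle is to identify this vanishing condition precisely with the composability condition $\sz_{V^*}(\mu_1)=\tz_{V^*}(\mu_2)$ coming from \eqref{eq:tsv}. I would do this by computing $\ker M$ in terms of the core: if $v_1\circ v_2=0$ then $\tz_V(v_1)=0$ and $\sz_V(v_2)=0$, so $v_1\in\ker(\tz_V)_{h_1}$ and $v_2\in\ker(\sz_V)_{h_2}$, and associativity forces $v_2=v_1^{-1}\circ 0_{h_1h_2}$. Under the left- and right-translation identifications $\ker(\tz_V)_{h_1}\cong\on{core}(V)_{\sz_H(h_1)}\cong\ker(\sz_V)_{h_2}$ this presents $\ker M$ as the set of pairs $(L_{h_1}\eta,\,-R_{h_2}\eta)$ with $\eta\in\on{core}(V)_{\sz_H(h_1)}$, the sign $-1$ being exactly the fact that inversion in a $\VB$-groupoid restricts to $-\id$ on $\on{core}(V)$. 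Feeding this into the vanishing condition and using the definition \eqref{eq:tsv} of $\sz_{V^*},\tz_{V^*}$ as the duals of these same translations turns ``$(\mu_1,\mu_2)$ kills $\ker M$'' into ``$\sz_{V^*}(\mu_1)=\tz_{V^*}(\mu_2)$''. This is the step where the precise normalisation of \eqref{eq:tsv} and the $-\id$-on-core fact must be handled with care.

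Finally, with $\Mult_{V^*}$ established as a partially defined, fibrewise-linear map whose domain is the composable pairs, the remaining groupoid axioms follow formally from those of $V$ via the pairing identity. Associativity of $\Mult_{V^*}$ is read off from $\l(\mu_1\circ\mu_2)\circ\mu_3,\,(v_1\circ v_2)\circ v_3\r=\l\mu_1,v_1\r+\l\mu_2,v_2\r+\l\mu_3,v_3\r$ together with associativity of $V$ and surjectivity of the triple product onto $V_{h_1h_2h_3}$; the units in $\on{core}(V)^*=(V^*)^{(0)}$ and the inverse $\mu\mapsto\mu^{-1}$ (characterised by $\l\mu^{-1},v^{-1}\r=-\l\mu,v\r$) are likewise forced by the corresponding structure of $V$. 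A last check verifies that the groupoid source and target of $V^*$ so produced agree with the maps \eqref{eq:tsv}, by testing the unit laws $\mu\circ\sz_{V^*}(\mu)=\mu$ and $\tz_{V^*}(\mu)\circ\mu=\mu$ against the pairing identity.
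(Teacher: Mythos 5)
Your construction is correct, and there is in fact nothing in the paper to compare it against line by line: the paper states this proposition without proof, quoting Pradines \cite{pra:rem}. The closest internal comparison is the paper's proof of the addendum immediately following (duality of $\VB$-modules, characterized by \eqref{eq:form1}), and your argument is exactly that technique, transplanted to the groupoid itself: define the dual structure through the pairing identity, then reduce existence and well-definedness of the product to showing that the functional $(v_1,v_2)\mapsto \l\mu_1,v_1\r+\l\mu_2,v_2\r$ annihilates the kernel of the multiplication map, which is computed via core elements and the translation maps. You handle the two genuinely delicate points correctly: the kernel of $(v_1,v_2)\mapsto v_1\circ v_2$ is $\{(L_{h_1}\eta,\,-R_{h_2}\eta):\eta\in\on{core}(V)_{\sz_H(h_1)}\}$, where $L_{h_1}\eta=0_{h_1}\circ \eta$ and $R_{h_2}\eta=\eta\circ 0_{h_2}$, the sign coming from the identity $\eta^{-1}=\sz_V(\eta)-\eta$ for $\eta\in \ker(\tz_V)|_{H^{(0)}}$; and annihilating this kernel is precisely the condition $\sz_{V^*}(\mu_1)=\tz_{V^*}(\mu_2)$ under the normalization \eqref{eq:tsv}. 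Your rank count is also right: $\on{ann}(\on{Gr}(\Mult_V))$ has rank $\on{rank}\,\on{core}(V)+2\,\on{rank}\,V^{(0)}$, which is what the multiplication graph of a $\VB$-groupoid with units $\on{core}(V)^*$ and core $(V^{(0)})^*$ must have, so the annihilator description gives smoothness and $\VB$-compatibility in one stroke. The only point worth making explicit in your ``formal'' last step is that associativity presupposes composability propagates, i.e.\ $\sz_{V^*}(\mu_1\circ\mu_2)=\sz_{V^*}(\mu_2)$ and $\tz_{V^*}(\mu_1\circ\mu_2)=\tz_{V^*}(\mu_1)$; both follow in one line from the pairing identity applied to $L_{h_1h_2}\eta=0_{h_1}\circ L_{h_2}\eta$ and $R_{h_1h_2}\eta=R_{h_1}\eta\circ 0_{h_2}$, so this is a routine check rather than a gap.
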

As an immediate consequence, we see that a $\VB$-groupoid is vacant if and only if the dual $\VB$-groupoid is a group.
\begin{example}\label{ex:part1}
For any Lie groupoid $H\rra H^{(0)}$, the tangent bundle is a $\VB$-groupoid $TH\rra TH^{(0)}$, with core 
the Lie algebroid $AH$ of $H$. Hence, the cotangent bundle of $H$ is a $\VB$-groupoid $T^*H\rra A^*H$, with core the 
cotangent bundle of $H^{(0)}$. It is the symplectic groupoid integrating the Poisson manifold $A^*H$.

If $H^{(0)}=\pt$ so that $H$ is a group, the tangent bundle is a group, and the cotangent bundle is the 
vacant $\VB$-groupoid $T^*H\rra \h^*$. The 
condition $\alpha=\alpha_1\circ\alpha_2$ for $\alpha_i\in T^*_{h_i}H$ and 
$\alpha=T^*_hH$ holds if and only if $h=h_1h_2$ and $(\alpha_1,\alpha_2)=(T_{h_1,h_2}\on{Mult}_H)^*\alpha$. 
We conclude that the graph of the groupoid multiplication $\Mult_{\T H}$ of 
\[ \T H=TH \oplus T^*H \rra \h^*\]
coincides with the graph of the 
Courant morphism $\T \Mult_H\colon \T H\times \T H\da \T H$. 
\end{example}

%\subsection{$\VB$-modules and their duals}\label{subsubsec:dual}
Let $V\rra V^{(0)}$ be a $\VB$-groupoid with base $H\rra H^{(0)}$, and let $P$ be a vector bundle with base $M$, 
with a groupoid action $\ca{A}_P$ of $V$. Then $P$ is called a \emph{$\VB$-module} if the graph 
$\on{Gr}(\ca{A}_P)\subseteq P\times V\times P$
of is a sub-vector bundle.
In particular, the action $\ca{A}_P$  restricts to a groupoid action $\ca{A}_M$ of $H$ on $M$, and the 
moment map 
$\uz_P\colon P\to V^{(0)}$ is a vector bundle morphism along $\uz_M\colon M\to H^{(0)}$.  (Similarly, we define $\LA$-modules over $\LA$-groupoids and $\CA$-modules over $\CA$-groupoids.) The inclusion 
\[ \uz_M^*\on{core}(V)\cong 
\uz_M^*\ker(\sz_V)\to  P,\ \ \ v\mapsto v\circ 0_m,\] dualizes to define a bundle map 
\begin{equation}\label{eq:peq}
\uz_{P^*}\colon P^*\to \on{core}(V)^*=(V^*)^{(0)},
\end{equation}
with base map $\uz_M$. We have the following addendum to Proposition \ref{prop:pradines}: 
\begin{proposition}
For any  $\VB$-module $P$ over $V\rra V^{(0)}$, the dual bundle $P^*$ is a 
$\VB$-module over $V^*\rra (V^*)^{(0)}$, for the moment map \eqref{eq:peq}.
The module action is uniquely defined by the property
\begin{equation}\label{eq:form1}
\l \mu\circ\nu,\ v\circ w\r=\l\mu,v\r+\l \nu,w\r
\end{equation}  
for $\mu\in V^*_{h},\ \nu\in P^*_{m},\ v\in V_{h},\ w\in P_{m}$. 
\end{proposition}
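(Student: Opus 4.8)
The plan is to mirror the proof of Proposition \ref{prop:pradines} (Pradines), of which this is the module-theoretic companion: the dual module action will be \emph{defined} by the duality formula \eqref{eq:form1}, and the whole structure will be packaged as an orthogonality statement between graphs. Concretely, over a point $(m',h,m)$ of $\on{Gr}(\ca{A}_M)$ (so $m'=h\circ m$), I equip $P_{m'}\oplus V_h\oplus P_m$ and its dual with the natural pairing twisted by a sign, $\l(\kappa,\mu,\nu),(w',v,w)\r=\l\mu,v\r+\l\nu,w\r-\l\kappa,w'\r$. I then \emph{define} $\on{Gr}(\ca{A}_{P^*})$ fiberwise to be the annihilator of $\on{Gr}(\ca{A}_P)$ with respect to this pairing. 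Since $\on{Gr}(\ca{A}_P)$ is a sub-vector bundle (the $\VB$-module hypothesis) and the pairing is fiberwise nondegenerate of constant rank, its annihilator is automatically a sub-vector bundle of $P^*\times V^*\times P^*$ along $\on{Gr}(\ca{A}_M)$; this disposes of the ``$\VB$'' part of the claim once $\on{Gr}(\ca{A}_{P^*})$ is shown to be the graph of an action.

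First I would record the linear-algebraic facts over a fixed base-composable pair $(h,m)$. The composition map $\Phi\colon(v,w)\mapsto v\circ w$, defined on $D=\{(v,w)\in V_h\oplus P_m:\sz_V(v)=\uz_P(w)\}$, is \emph{surjective} onto $P_{m'}$: given $w'$, choose $v\in V_h$ with $\tz_V(v)=\uz_P(w')$ (possible since $\tz_V$ is a fiberwise surjection) and set $w=v^{-1}\circ w'$, so that $v\circ w=\tz_V(v)\circ w'=w'$ and $(v,w)\in D$ by equivariance of $\uz_P$ (cf.~\eqref{eq:momeq}). A dimension count then gives $\dim D=\on{rank}(P)+\on{rank}(\on{core}(V))$ and $\dim\ker\Phi=\on{rank}(\on{core}(V))$. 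Surjectivity of $\Phi$ forces the projection of $\on{Gr}(\ca{A}_{P^*})$ forgetting its first entry to be fiberwise injective, since the functional $w'\mapsto\l\kappa,w'\r$ on $P_{m'}$ is determined by $(\mu,\nu)$ through \eqref{eq:form1}; and a matching count shows that both $\on{Gr}(\ca{A}_{P^*})$ and the set of composable covectors $\{\sz_{V^*}(\mu)=\uz_{P^*}(\nu)\}$ have fiber dimension $\on{rank}(P)+\on{rank}(V^{(0)})$.

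The technical heart --- and the step I expect to be the main obstacle --- is the identification of the image of that projection with the composable covectors, i.e.~$(\kappa,\mu,\nu)\in\on{Gr}(\ca{A}_{P^*})\iff\sz_{V^*}(\mu)=\uz_{P^*}(\nu)$. By the preceding dimension count it suffices to prove one inclusion, and this is where the definitions of $\sz_{V^*}$ in \eqref{eq:tsv} and of $\uz_{P^*}$ in \eqref{eq:peq} must be unwound: both are dual to the left/right translation isomorphisms identifying $\on{core}(V)$ with $\ker(\tz_V)$ and $\ker(\sz_V)$, while $\ker\Phi$, of dimension $\on{rank}(\on{core}(V))$, consists precisely of the directions coming from the core of $V$. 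Pairing $(\mu,\nu)$ against these core directions and comparing with the defining formulas for $\sz_{V^*}$ and $\uz_{P^*}$ yields the equivalence; I expect this to require care in matching the translation conventions and signs.

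Granting this, $\mu\circ\nu$ is defined as the unique first entry over each composable $(\mu,\nu)$, it lies in $P^*_{m'}$, and \eqref{eq:form1} holds by construction and characterizes it by nondegeneracy. It then remains to verify the groupoid-action axioms: that $\uz_{P^*}(\nu)$ from \eqref{eq:peq} is a left unit (pair $\uz_{P^*}(\nu)\circ\nu$ against $v\circ w$ with $v$ a unit, using that units act trivially on $P$), and associativity $(\mu_1\circ\mu_2)\circ\nu=\mu_1\circ(\mu_2\circ\nu)$ (pair both sides against a composable triple $v_1\circ v_2\circ w$, expand via \eqref{eq:form1} and the associativity of $\ca{A}_P$, then invoke nondegeneracy together with surjectivity of the iterated composition). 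Equivariance of $\uz_{P^*}$ and compatibility with the dual groupoid $V^*\rra(V^*)^{(0)}$ of Proposition \ref{prop:pradines} follow from the same pairing identities, completing the proof that $P^*$ is a $\VB$-module.
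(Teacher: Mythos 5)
Your proposal is correct, and its mathematical core is the same as the paper's proof; the difference is one of packaging. The paper simply defines $\mu\circ\nu$ by \eqref{eq:form1} and reduces everything to a single well-definedness check: if $v\circ w=0_{h\circ m}$ then $\l\mu,v\r+\l \nu,w\r=0$. It proves this by writing such a $v$ as a left translate $0_h\circ v_1$ of a representative $v_1$ of a core element, decomposing $v_1\circ w=(v_1-\sz_V(v_1))\circ 0_m+w$, and then invoking the definitions \eqref{eq:tsv} and \eqref{eq:peq} together with the composability hypothesis $\sz_{V^*}(\mu)=\uz_{P^*}(\nu)$. This is precisely the ``technical heart'' you defer: your $\ker\Phi$ is parametrized by the core via $[v_1]\mapsto \big(0_h\circ v_1,\,-(v_1-\sz_V(v_1))\circ 0_m\big)$, and pairing $(\mu,\nu)$ against such an element gives $\l \sz_{V^*}(\mu)-\uz_{P^*}(\nu),\,[v_1]\r$, so vanishing on $\ker\Phi$ is equivalent to composability of $(\mu,\nu)$ --- exactly the equivalence you need, and the translation/sign conventions work out as in the paper's displayed computation. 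What your annihilator-of-graphs formulation buys is what the paper leaves implicit: the annihilator of the subbundle $\on{Gr}(\ca{A}_P)$ along $\on{Gr}(\ca{A}_M)$ is automatically a subbundle, so smoothness and the $\VB$-property of the dual action come for free, and your pairing arguments for the unit and associativity axioms flesh out the paper's closing remark that the rest ``is straightforward to check''. What the paper's route buys is brevity: beyond surjectivity of $\Phi$ (needed in both arguments so that \eqref{eq:form1} determines $\mu\circ\nu$ on all of $P^*_{h\circ m}$), it requires no dimension counts. In short: no gap; the one step you flagged as the main obstacle is exactly the computation the paper writes out, and it goes through as you predicted.
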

\begin{proof}
(Cf.~ \cite{rob:cla}.) To check that the formula for the module action is well-defined, we must verify that the right hand side vanishes whenever
$v\circ w=0_{h\circ m}$. Since this condition implies $\tz_V(v)=\uz_\PP(0_{h\circ m})=0$, we may write 
$v=0_{h^{-1}}\circ v_1$ with $v_1\in \ker(t_V)_{H^{(0)}}$ %_{\sz_H(h)}$.
%and we have that $v_1\circ w=0_m$. 
Then 
\[ 0_m=v_1\circ w=(v_1-\sz_V(v_1))\circ 0_m+\sz_V(v_1)\circ w=(v_1-\sz_V(v_1))\circ 0_m+ w.\]
Pairing with $\nu$, this shows $\l \nu,(v_1-\sz_{V}(v_1))\circ 0_m\r+\l \nu,w\r=0$. Letting 
$[v_1]\in \on{core}(V)$ be the equivalence class of $v_1$ mod $V^{(0)}$, we have 
\[ \l \nu,(v_1-\sz_{V}(v_1))\circ 0_m\r=\l \uz_{P^*}(\nu),[v_1]\r=\l \sz_{V^*}(\mu),[v_1]\r=
 \l\mu,v\r.\]
This shows that $\l\mu,v\r+\l \nu,w\r=0$ as desired. Hence \eqref{eq:form1} is well-defined; it is straightforward to check that \eqref{eq:form1} gives a $\VB$-groupoid action. 
\end{proof}

\begin{example}
Continuing Example \ref{ex:part1}, suppose the Lie groupoid $H$ acts on a manifold $M$, with a moment map 
$\uz\colon M\to H^{(0)}$. Then $TH\rra TH^{(0)}$ acts on $TM$, and dually $T^*H\rra A^*H$ acts on 
$T^*M$, with a moment map $\uz_{T^*M}\colon T^*M\to A^*H$. 

If $H$ is a Lie group, then this is the usual moment map from symplectic geometry. We obtain an action 
of $\T H\rra \h^*$ on $\T M$, such that the graph of the action map $\mathsf{a}_{\T M}$ coincides with the 
graph of $\T \mathsf{a}_M$.
\end{example}

\section{Semidirect products}\label{app:semidirect}
There is a general notion of \emph{semi-direct product} of two groupoids,  with one groupoid $H$ acting on a second groupoid $G$ by automorphisms (in a suitable sense). We will only need the simple case that $H$ is a Lie group, where this notion is a rather straightforward generalization of the semi-direct product of two Lie groups. 

\begin{proposition}[Semi-direct product]
Suppose $G\rra M$ is a groupoid on which a Lie group $H$ acts by automorphisms. Then there is 
\emph{semi-direct product groupoid} $H\ltimes G\rra M$, equal to $H\times G$ as a manifold, 
with source and target maps 
\[ \sz(h,g)=\sz(g),\ \tz(h.g)=h.\tz(g),\]
and 
with the groupoid multiplication of composable elements given as 
\[ (h_1,g_1)(h_2,g_2)=(h_1h_2, (h_2^{-1}.g_1)g_2).\]
Given a $G$-action on a manifold $Q$, with moment map $\uz_Q\colon Q\to M$, we obtain 
an action of $H\ltimes G$ on $H\times Q$, with moment map $\uz_{H\times Q}(h,q)=h\bullet \uz_Q(q)$, and for composable elements
\[ (h_1,g)\circ (h_2,q)=(h_1h_2,\ (h_2^{-1}.g)\circ q).\]
If a closed subgroup $K\subseteq H$ acts on $Q$, in such a way that $k.(g\circ q)=(k.g)\circ (k.q)$ 
for all $k\in K$ and composable $g\in G,\ q\in Q$, then this action of $H\ltimes G$ descends to  
an action on the associated bundle $H\times_K Q$. 
\end{proposition}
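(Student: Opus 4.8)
The plan is to verify the groupoid axioms for $H\ltimes G$ and the action axioms on $H\times Q$ by direct computation, and then to obtain the descent by a single well-definedness check invoking the compatibility hypothesis on $K$. For the groupoid structure, the point making the product $(h_1,g_1)(h_2,g_2)=(h_1h_2,(h_2^{-1}.g_1)g_2)$ meaningful is that $H$ acts on $G$ by automorphisms, so the structure maps of $G$ are $H$-equivariant: when $\sz(g_1)=h_2.\tz(g_2)$ one gets $\sz(h_2^{-1}.g_1)=h_2^{-1}.\sz(g_1)=\tz(g_2)$, so $(h_2^{-1}.g_1)$ and $g_2$ are composable in $G$. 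I would then read off the source and target of the product, take the unit at $m$ to be $(e,1_m)$ and the inverse to be $(h,g)^{-1}=(h^{-1},h.g^{-1})$, and check associativity, which reduces to associativity in $G$ together with $h.(gg')=(h.g)(h.g')$. For the action on $H\times Q$, composability of $(h_1,g)$ with $(h_2,q)$ means $\sz(g)=h_2.\uz_Q(q)$, whence $\sz(h_2^{-1}.g)=\uz_Q(q)$ and $(h_2^{-1}.g)\circ q$ is defined; the moment-map identity and the unit and associativity axioms then follow formally from the corresponding identities for the $G$-action on $Q$ together with equivariance of $\tz$. These checks are mechanical.

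The main step is the descent to $H\times_K Q=(H\times Q)/K$, where $k\cdot(h,q)=(hk^{-1},k.q)$. Applying the hypothesis $k.(g\circ q)=(k.g)\circ(k.q)$ to a unit $g=1_{\uz_Q(q)}$ forces $\uz_Q$ to be $K$-equivariant; hence $\uz_{H\times Q}(h,q)=h.\uz_Q(q)$ is $K$-invariant, descends to a moment map on $H\times_K Q$, and makes composability of $(h_1,g)$ with a class $[(h_2,q)]$ well-posed. The crux is that the action formula respects $K$-classes: replacing $(h_2,q)$ by $(h_2k^{-1},k.q)$ and using $(h_2k^{-1})^{-1}.g=k.(h_2^{-1}.g)$, the hypothesis gives
\[ \big((h_2k^{-1})^{-1}.g\big)\circ(k.q)=k.\big((h_2^{-1}.g)\circ q\big),\]
so that $(h_1,g)\circ(h_2k^{-1},k.q)=k\cdot\big((h_1,g)\circ(h_2,q)\big)$ represents the same class. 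Once well-definedness is in hand, the action axioms on $H\times_K Q$ follow by pushing those on $H\times Q$ through the surjective quotient map, which intertwines the two actions by construction.

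I expect the only genuine subtlety to be bookkeeping: tracking the composability conditions, keeping the direction of equivariance straight (that $\sz$ and $\tz$ intertwine the $H$-actions on $G$ and on $M$), and confirming that the hypothesis is applied to the genuinely composable pair $(h_2^{-1}.g,\,q)$ rather than to an arbitrary one. No essential difficulty arises beyond this careful tracking.
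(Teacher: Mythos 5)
Your proof is correct and is precisely the ``straightforward verification'' that the paper leaves to the reader (its entire proof of this proposition is the single sentence asserting that the verification is straightforward). Your handling of the descent step --- extracting $K$-equivariance of $\uz_Q$ by applying the hypothesis to units $g=1_{\uz_Q(q)}$, and then applying the compatibility identity to the genuinely composable pair $(h_2^{-1}.g,\,q)$ to show the action formula respects $K$-classes --- supplies exactly the details where care is needed.
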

The proof is a straightforward verification. 
%
%\begin{proof}We verify: \[ \sz((h_1,g_1)(h_2,g_2))=\sz((h_2^{-1}.g_1)g_2)=\sz(g_2)=\sz(h_2,g_2),\]\[ \tz((h_1,g_1)(h_2,g_2))=(h_1h_2).\tz((h_2^{-1}.g_1)g_2)=(h_1h_2).h_2^{-1}.g_1=h_1.g_1=\tz(h_1,g_1).\]Furthermore, if $\sz(h_1,g_1)=\tz(h_2,g_2),\ \sz(h_2,g_2)=\tz(h_3,g_3)$, then 
%\[ ((h_1,g_1)(h_2,g_2))(h_3,g_3)=(h_1h_2h_3,\ (h_3^{-1}.(h_2^{-1}.g_1)g_2)g_3=(h_1h_2h_3,\ ((h_2h_3)^{-1}.g_1) (h_2^{-1}.g_2) g_3),\]which is also the result for $(h_1,g_1)((h_2,g_2)(h_3,g_3))=(h_1,g_1)(h_2h_3,(h_3^{-1}.g_2)g_3)$. The rest is clear; in particular $(h,g)^{-1}=(h^{-1},(h.g)^{-1})$.  \end{proof}
As a special case, suppose $H$ acts on a $\VB$-groupoid $V\rra V^{(0)}$ over $G\rra G^{(0)}$. 
Then the semi-direct product $H\ltimes V$ is a $\VB$-groupoid over the semi-direct product 
$H\ltimes G$. 
\begin{example}
The Dirac Lie group $(H,\AA,E)$ determines a linear groupoid 
$\q\rra \g$, with an action of $H$ by automorphisms. Here $\tz(\lambda)=(1-\pr_\rr)(\lambda),\ 
\sz(\lambda)=(1-\pr_{\rr^\perp})(\lambda)$. The action of $H$ preserves the metric, is the $\bullet$-action on $\g$, and preserves both $\rr,\rr^\perp$. The groupoid multiplication of $\q$ reads as 
\[ \lambda_1\circ \lambda_2=\lambda_1+\lambda_2-\sz(\lambda_1).\]
Hence, for the semi-direct product we obtain 
\[ \sz(h,\lambda)=(1-\pr_{\rr^\perp})(\lambda),\ \ 
\tz(h,\lambda)=h\bullet (1-\pr_\rr)(\lambda),\]
and for composable elements, 
\[ (h_1,\lambda_1)\circ (h_2,\lambda_2)=(h_1h_2,\ (h_2^{-1}\lambda_1)\circ \lambda_2)
=(h_1h_2,\lambda_2+h_2^{-1}\bullet (\lambda_1-\sz(\lambda_1))).\]
We conclude that as a groupoid, $\AA$ is a semi-direct product $\AA=H\ltimes \q\rra \g$. 
Likewise $E=H\ltimes \g$. 

Let $\nn$ be a given space, with a moment map $f_\nn\colon \nn\to \dd$. Let 
$\uz=\pr_\g\circ f_\nn$, and let $\q\rra \g$ act by $\lambda\circ \zeta=\zeta+
F_\nn(\lambda)$, where $F_\nn$ was defined in Section \ref{subsec:normal}. It is easily seen that this action is compatible with metrics. 
We obtain an action of $\AA=H\ltimes \q$ on $\PP=H\times \nn$, as explained above. 
\end{example}
\end{appendix}

\bibliographystyle{amsplain}
%\bibliography{../../../Biblio/ref}
%\bibliography{../../Biblio/ref}
\def\cprime{$'$} \def\polhk#1{\setbox0=\hbox{#1}{\ooalign{\hidewidth
  \lower1.5ex\hbox{`}\hidewidth\crcr\unhbox0}}} \def\cprime{$'$}
  \def\cprime{$'$} \def\cprime{$'$} \def\cprime{$'$}
  \def\polhk#1{\setbox0=\hbox{#1}{\ooalign{\hidewidth
  \lower1.5ex\hbox{`}\hidewidth\crcr\unhbox0}}} \def\cprime{$'$}
  \def\cprime{$'$} \def\cprime{$'$} \def\cprime{$'$} \def\cprime{$'$}
\providecommand{\bysame}{\leavevmode\hbox to3em{\hrulefill}\thinspace}
\providecommand{\MR}{\relax\ifhmode\unskip\space\fi MR }
% \MRhref is called by the amsart/book/proc definition of \MR.
\providecommand{\MRhref}[2]{%
  \href{http://www.ams.org/mathscinet-getitem?mr=#1}{#2}
}
\providecommand{\href}[2]{#2}

\end{document}